\documentclass{amsart}

%%%%%%%%%%%%%%%%%%%%%%%%%%%%%%%%%%%%%%%%%%%%%%%%%%%%%%%%%%%%%%%%%%%%%%%%%%%%%%%%%%%%%%%%%%%%%%%%%%%%%%%%%%%%%%%%%%%%%%%%%%%%%%%%%%%%%%%%%%%%%%%%%%%%%%%%%%%%%%%%%%%%%%%%%%%%%%%%%%%%%%%%%%%%%%%%%%%%%%%%%%%%%%%%%%%%%%%%%%%%%%%%%%%%%%%%%%%%%%%%%%%%%%%%%%%%
\usepackage{amsfonts}
\usepackage{amsmath}

\usepackage{datetime}
\usepackage{url}
\usepackage[top=3cm, bottom=3cm, left=2.5cm, right=2.5cm]{geometry}

\setcounter{MaxMatrixCols}{10}
%TCIDATA{OutputFilter=LATEX.DLL}
%TCIDATA{Version=5.50.0.2890}
%TCIDATA{<META NAME="SaveForMode" CONTENT="1">}
%TCIDATA{BibliographyScheme=BibTeX}
%TCIDATA{Created=Wed Oct 01 12:11:01 2008}
%TCIDATA{LastRevised=Monday, July 05, 2010 11:16:50}
%TCIDATA{<META NAME="GraphicsSave" CONTENT="32">}
%TCIDATA{Language=American English}

\let\underbar\underline

\newtheorem{theorem}{Theorem}
\theoremstyle{plain}

\newtheorem{axiom}[theorem]{Axiom}

\newtheorem{conjecture}[theorem]{Conjecture}
\newtheorem{corollary}[theorem]{Corollary}

\newtheorem{example}[theorem]{Example}
\newtheorem{exercise}[theorem]{Exercise}
\newtheorem{lemma}[theorem]{Lemma}

\newtheorem{proposition}[theorem]{Proposition}
\newtheorem{remark}[theorem]{Remark}

%%
% for letters in front of Theorems-numbers
%%
\newtheorem{appendixTheorem}{Theorem}[section]
\newtheorem{appendixLemma}[appendixTheorem]{Lemma}
\newtheorem{appendixRemark}[appendixTheorem]{Remark}

%%
% for ordinary Roman font (non-italic)
%%
\theoremstyle{definition} 
\newtheorem{appendixDefinition}[appendixTheorem]{Definition}
\newtheorem{definition}[theorem]{Definition}

\newcommand{\R}{\mathbb{R}}
\newcommand{\N}{\mathbb{N}}
\newcommand{\E}{\mathbb{E}}
\newcommand{\F}{\mathcal{F}}
\renewcommand{\P}{\mathbb{P}}

\newcommand{\rp}{\zeta}

\newcommand{\tY}{\tilde{Y}}
\newcommand{\tZ}{\tilde{Z}}

\newcommand{\ty}{ {\tilde{y}} } % the transformed y
\newcommand{\tz}{ {\tilde{z}} }

\newcommand{\tu}{ {\tilde{y}} } % \tilde{u}, the transformed u
\newcommand{\tp}{ {\tilde{z}} }

\newcommand{\uu}{y} % u, the untransformed u. to be able to change it easily later on.
\newcommand{\pp}{z}

\newcommand{\Tr}{\operatorname{Tr}}
\newcommand{\Lip}{\operatorname{Lip}}
\newcommand{\Cunif}{C_{\operatorname{unif}}}
\newcommand{\tCunif}{\tilde{C}_{\operatorname{unif}}}
\newcommand{\pvar}{p-\operatorname{var}}

\newcommand{\itob}{b}

% Macros for Scientific Word 4.0 documents saved with the LaTeX filter.
% Copyright (C) 2002 Mackichan Software, Inc.

\typeout{TCILATEX Macros for Scientific Word 4.0 <12 Mar 2002>.}
\typeout{NOTICE:  This macro file is NOT proprietary and may be
freely copied and distributed.}
\makeatletter

%%%%%%%%%%%%%%%%%%%%%
% FMTeXButton
% This is used for putting TeXButtons in the
% frontmatter of a document. Add a line like
% \QTagDef{FMTeXButton}{101}{} to the filter
% section of the cst being used. Also add a
% new section containing:
%     [f_101]
%     ALIAS=FMTexButton
%     TAG_TYPE=FIELD
%     TAG_LEADIN=TeX Button:
%
% It also works to put \defs in the preamble after
% the \input tcilatex

%
%%%%%%%%%%%%%%%%%%%%%%
% macros for time
\newcount\@hour\newcount\@minute\chardef\@x10\chardef\@xv60
\def\tcitime{
\def\@time{%
  \@minute\time\@hour\@minute\divide\@hour\@xv
  \ifnum\@hour<\@x 0\fi\the\@hour:%
  \multiply\@hour\@xv\advance\@minute-\@hour
  \ifnum\@minute<\@x 0\fi\the\@minute
  }}%

%%%%%%%%%%%%%%%%%%%%%%
% macro for hyperref and msihyperref
%\@ifundefined{hyperref}{\def\hyperref#1#2#3#4{#2\ref{#4}#3}}{}

\def\x@hyperref#1#2#3{%
   % Turn off various catcodes before reading parameter 4
   \catcode`\~ = 12
   \catcode`\$ = 12
   \catcode`\_ = 12
   \catcode`\# = 12
   \catcode`\& = 12
   \y@hyperref{#1}{#2}{#3}%
}

\def\y@hyperref#1#2#3#4{%
   #2\ref{#4}#3
   \catcode`\~ = 13
   \catcode`\$ = 3
   \catcode`\_ = 8
   \catcode`\# = 6
   \catcode`\& = 4
}

\@ifundefined{hyperref}{\let\hyperref\x@hyperref}{}
\@ifundefined{msihyperref}{\let\msihyperref\x@hyperref}{}

% macro for external program call
\@ifundefined{qExtProgCall}{\def\qExtProgCall#1#2#3#4#5#6{\relax}}{}
%%%%%%%%%%%%%%%%%%%%%%
%
% macros for graphics
%
%
%
\def\QCTOpt[#1]#2{%
  \def\QCTOptB{#1}
  \def\QCTOptA{#2}
}
\def\QCTNOpt#1{%
  \def\QCTOptA{#1}
  \let\QCTOptB\empty
}
\def\Qct{%
  \@ifnextchar[{%
    \QCTOpt}{\QCTNOpt}
}
\def\QCBOpt[#1]#2{%
  \def\QCBOptB{#1}%
  \def\QCBOptA{#2}%
}
\def\QCBNOpt#1{%
  \def\QCBOptA{#1}%
  \let\QCBOptB\empty
}
\def\Qcb{%
  \@ifnextchar[{%
    \QCBOpt}{\QCBNOpt}%
}
\def\PrepCapArgs{%
  \ifx\QCBOptA\empty
    \ifx\QCTOptA\empty
      {}%
    \else
      \ifx\QCTOptB\empty
        {\QCTOptA}%
      \else
        [\QCTOptB]{\QCTOptA}%
      \fi
    \fi
  \else
    \ifx\QCBOptA\empty
      {}%
    \else
      \ifx\QCBOptB\empty
        {\QCBOptA}%
      \else
        [\QCBOptB]{\QCBOptA}%
      \fi
    \fi
  \fi
}
\newcount\GRAPHICSTYPE
%\GRAPHICSTYPE 0 is for TurboTeX
%\GRAPHICSTYPE 1 is for DVIWindo (PostScript)
%%%(removed)%\GRAPHICSTYPE 2 is for psfig (PostScript)
\GRAPHICSTYPE=\z@
\def\GRAPHICSPS#1{%
 \ifcase\GRAPHICSTYPE%\GRAPHICSTYPE=0
   \special{ps: #1}%
 \or%\GRAPHICSTYPE=1
   \special{language "PS", include "#1"}%
%%%\or%\GRAPHICSTYPE=2
%%%  #1%
 \fi
}%
%
%
%
% \graffile{ body }                                  %#1
%          { contentswidth (scalar)  }               %#2
%          { contentsheight (scalar) }               %#3
%          { vertical shift when in-line (scalar) }  %#4

\def\graffile#1#2#3#4{%
%%% \ifnum\GRAPHICSTYPE=\tw@
%%%  %Following if using psfig
%%%  \@ifundefined{psfig}{\input psfig.tex}{}%
%%%  \psfig{file=#1, height=#3, width=#2}%
%%% \else
  %Following for all others
  % JCS - added BOXTHEFRAME, see below
    \bgroup
       \@inlabelfalse
       \leavevmode
       \@ifundefined{bbl@deactivate}{\def~{\string~}}{\activesoff}%
        \raise -#4 \BOXTHEFRAME{%
           \hbox to #2{\raise #3\hbox to #2{\null #1\hfil}}}%
    \egroup
}%
%
% A box for drafts
\def\draftbox#1#2#3#4{%
 \leavevmode\raise -#4 \hbox{%
  \frame{\rlap{\protect\tiny #1}\hbox to #2%
   {\vrule height#3 width\z@ depth\z@\hfil}%
  }%
 }%
}%
\newcount\@msidraft
\@msidraft=\z@
\let\nographics=\@msidraft
\newif\ifwasdraft
\wasdraftfalse

%  \GRAPHIC{ body }                                  %#1
%          { draft name }                            %#2
%          { contentswidth (scalar)  }               %#3
%          { contentsheight (scalar) }               %#4
%          { vertical shift when in-line (scalar) }  %#5
\def\GRAPHIC#1#2#3#4#5{%
   \ifnum\@msidraft=\@ne\draftbox{#2}{#3}{#4}{#5}%
   \else\graffile{#1}{#3}{#4}{#5}%
   \fi
}
\def\addtoLaTeXparams#1{%
    \edef\LaTeXparams{\LaTeXparams #1}}%
%
% JCS -  added a switch BoxFrame that can
% be set by including X in the frame params.
% If set a box is drawn around the frame.

\newif\ifBoxFrame \BoxFramefalse
\newif\ifOverFrame \OverFramefalse
\newif\ifUnderFrame \UnderFramefalse

\def\BOXTHEFRAME#1{%
   \hbox{%
      \ifBoxFrame
         \frame{#1}%
      \else
         {#1}%
      \fi
   }%
}

\def\doFRAMEparams#1{\BoxFramefalse\OverFramefalse\UnderFramefalse\readFRAMEparams#1\end}%
\def\readFRAMEparams#1{%
 \ifx#1\end%
  \let\next=\relax
  \else
  \ifx#1i\dispkind=\z@\fi
  \ifx#1d\dispkind=\@ne\fi
  \ifx#1f\dispkind=\tw@\fi
  \ifx#1t\addtoLaTeXparams{t}\fi
  \ifx#1b\addtoLaTeXparams{b}\fi
  \ifx#1p\addtoLaTeXparams{p}\fi
  \ifx#1h\addtoLaTeXparams{h}\fi
  \ifx#1X\BoxFrametrue\fi
  \ifx#1O\OverFrametrue\fi
  \ifx#1U\UnderFrametrue\fi
  \ifx#1w
    \ifnum\@msidraft=1\wasdrafttrue\else\wasdraftfalse\fi
    \@msidraft=\@ne
  \fi
  \let\next=\readFRAMEparams
  \fi
 \next
 }%
%
%Macro for In-line graphics object
%   \IFRAME{ contentswidth (scalar)  }               %#1
%          { contentsheight (scalar) }               %#2
%          { vertical shift when in-line (scalar) }  %#3
%          { draft name }                            %#4
%          { body }                                  %#5
%          { caption}                                %#6

\def\IFRAME#1#2#3#4#5#6{%
      \bgroup
      \let\QCTOptA\empty
      \let\QCTOptB\empty
      \let\QCBOptA\empty
      \let\QCBOptB\empty
      #6%
      \parindent=0pt
      \leftskip=0pt
      \rightskip=0pt
      \setbox0=\hbox{\QCBOptA}%
      \@tempdima=#1\relax
      \ifOverFrame
          % Do this later
          \typeout{This is not implemented yet}%
          \show\HELP
      \else
         \ifdim\wd0>\@tempdima
            \advance\@tempdima by \@tempdima
            \ifdim\wd0 >\@tempdima
               \setbox1 =\vbox{%
                  \unskip\hbox to \@tempdima{\hfill\GRAPHIC{#5}{#4}{#1}{#2}{#3}\hfill}%
                  \unskip\hbox to \@tempdima{\parbox[b]{\@tempdima}{\QCBOptA}}%
               }%
               \wd1=\@tempdima
            \else
               \textwidth=\wd0
               \setbox1 =\vbox{%
                 \noindent\hbox to \wd0{\hfill\GRAPHIC{#5}{#4}{#1}{#2}{#3}\hfill}\\%
                 \noindent\hbox{\QCBOptA}%
               }%
               \wd1=\wd0
            \fi
         \else
            \ifdim\wd0>0pt
              \hsize=\@tempdima
              \setbox1=\vbox{%
                \unskip\GRAPHIC{#5}{#4}{#1}{#2}{0pt}%
                \break
                \unskip\hbox to \@tempdima{\hfill \QCBOptA\hfill}%
              }%
              \wd1=\@tempdima
           \else
              \hsize=\@tempdima
              \setbox1=\vbox{%
                \unskip\GRAPHIC{#5}{#4}{#1}{#2}{0pt}%
              }%
              \wd1=\@tempdima
           \fi
         \fi
         \@tempdimb=\ht1
         %\advance\@tempdimb by \dp1
         \advance\@tempdimb by -#2
         \advance\@tempdimb by #3
         \leavevmode
         \raise -\@tempdimb \hbox{\box1}%
      \fi
      \egroup%
}%
%
%Macro for Display graphics object
%   \DFRAME{ contentswidth (scalar)  }               %#1
%          { contentsheight (scalar) }               %#2
%          { draft label }                           %#3
%          { name }                                  %#4
%          { caption}                                %#5
\def\DFRAME#1#2#3#4#5{%
  \hfil\break
  \bgroup
     \leftskip\@flushglue
     \rightskip\@flushglue
     \parindent\z@
     \parfillskip\z@skip
     \let\QCTOptA\empty
     \let\QCTOptB\empty
     \let\QCBOptA\empty
     \let\QCBOptB\empty
     \vbox\bgroup
        \ifOverFrame
           #5\QCTOptA\par
        \fi
        \GRAPHIC{#4}{#3}{#1}{#2}{\z@}%
        \ifUnderFrame
           \break#5\QCBOptA
        \fi
     \egroup
   \egroup
   \break
}%
%
%Macro for Floating graphic object
%   \FFRAME{ framedata f|i tbph x F|T }              %#1
%          { contentswidth (scalar)  }               %#2
%          { contentsheight (scalar) }               %#3
%          { caption }                               %#4
%          { label }                                 %#5
%          { draft name }                            %#6
%          { body }                                  %#7
\def\FFRAME#1#2#3#4#5#6#7{%
 %If float.sty loaded and float option is 'h', change to 'H'  (gp) 1998/09/05
  \@ifundefined{floatstyle}
    {%floatstyle undefined (and float.sty not present), no change
     \begin{figure}[#1]%
    }
    {%floatstyle DEFINED
     \ifx#1h%Only the h parameter, change to H
      \begin{figure}[H]%
     \else
      \begin{figure}[#1]%
     \fi
    }
  \let\QCTOptA\empty
  \let\QCTOptB\empty
  \let\QCBOptA\empty
  \let\QCBOptB\empty
  \ifOverFrame
    #4
    \ifx\QCTOptA\empty
    \else
      \ifx\QCTOptB\empty
        \caption{\QCTOptA}%
      \else
        \caption[\QCTOptB]{\QCTOptA}%
      \fi
    \fi
    \ifUnderFrame\else
      \label{#5}%
    \fi
  \else
    \UnderFrametrue%
  \fi
  \begin{center}\GRAPHIC{#7}{#6}{#2}{#3}{\z@}\end{center}%
  \ifUnderFrame
    #4
    \ifx\QCBOptA\empty
      \caption{}%
    \else
      \ifx\QCBOptB\empty
        \caption{\QCBOptA}%
      \else
        \caption[\QCBOptB]{\QCBOptA}%
      \fi
    \fi
    \label{#5}%
  \fi
  \end{figure}%
 }%
%
%
%    \FRAME{ framedata f|i tbph x F|T }              %#1
%          { contentswidth (scalar)  }               %#2
%          { contentsheight (scalar) }               %#3
%          { vertical shift when in-line (scalar) }  %#4
%          { caption }                               %#5
%          { label }                                 %#6
%          { name }                                  %#7
%          { body }                                  %#8
%
%    framedata is a string which can contain the following
%    characters: idftbphxFT
%    Their meaning is as follows:
%             i, d or f : in-line, display, or floating
%             t,b,p,h   : LaTeX floating placement options
%             x         : fit contents box to contents
%             F or T    : Figure or Table.
%                         Later this can expand
%                         to a more general float class.
%
%
\newcount\dispkind%

\def\makeactives{
  \catcode`\"=\active
  \catcode`\;=\active
  \catcode`\:=\active
  \catcode`\'=\active
  \catcode`\~=\active
}
\bgroup
   \makeactives
   \gdef\activesoff{%
      \def"{\string"}%
      \def;{\string;}%
      \def:{\string:}%
      \def'{\string'}%
      \def~{\string~}%
      %\bbl@deactivate{"}%
      %\bbl@deactivate{;}%
      %\bbl@deactivate{:}%
      %\bbl@deactivate{'}%
    }
\egroup

\def\FRAME#1#2#3#4#5#6#7#8{%
 \bgroup
 \ifnum\@msidraft=\@ne
   \wasdrafttrue
 \else
   \wasdraftfalse%
 \fi
 \def\LaTeXparams{}%
 \dispkind=\z@
 \def\LaTeXparams{}%
 \doFRAMEparams{#1}%
 \ifnum\dispkind=\z@\IFRAME{#2}{#3}{#4}{#7}{#8}{#5}\else
  \ifnum\dispkind=\@ne\DFRAME{#2}{#3}{#7}{#8}{#5}\else
   \ifnum\dispkind=\tw@
    \edef\@tempa{\noexpand\FFRAME{\LaTeXparams}}%
    \@tempa{#2}{#3}{#5}{#6}{#7}{#8}%
    \fi
   \fi
  \fi
  \ifwasdraft\@msidraft=1\else\@msidraft=0\fi{}%
  \egroup
 }%
%
% This macro added to let SW gobble a parameter that
% should not be passed on and expanded.

\def\TEXUX#1{"texux"}

%
% Macros for text attributes:
%
%
%
%
%%%%%%%%%%%%%%%%%%%%%%%%%%%%%%%%%%%%%%%%%%%%%%%%%%%%%%%%%%%%%%%%%%%%%%%%
%
%
% macros for user - defined functions
%
%
% macro for unit names
%

%
% miscellaneous
\long\def\QQQ#1#2{%
     \long\expandafter\def\csname#1\endcsname{#2}}%
\@ifundefined{QTP}{\def\QTP#1{}}{}
\@ifundefined{QEXCLUDE}{\def\QEXCLUDE#1{}}{}
\@ifundefined{Qlb}{}{}
\@ifundefined{Qlt}{}{}
\long\def\QQA#1#2{}%
\def\QTR#1#2{{\csname#1\endcsname {#2}}}%
\def\EXPAND#1[#2]#3{}%
\def\NOEXPAND#1[#2]#3{}%
\def\LaTeXparent#1{}%
\def\ChildStyles#1{}%
\def\ChildDefaults#1{}%
\def\QTagDef#1#2#3{}%

% Constructs added with Scientific Notebook
\@ifundefined{correctchoice}{}{}
\@ifundefined{HTML}{\def\HTML#1{\relax}}{}
\@ifundefined{TCIIcon}{\def\TCIIcon#1#2#3#4{\relax}}{}
\if@compatibility
  \typeout{Not defining UNICODE  U or CustomNote commands for LaTeX 2.09.}
\else
  \providecommand{\UNICODE}[2][]{\protect\rule{.1in}{.1in}}
  \providecommand{\U}[1]{\protect\rule{.1in}{.1in}}
  
\fi

\@ifundefined{lambdabar}{
      
   }{}

%
% Macros for style editor docs
\@ifundefined{StyleEditBeginDoc}{}{}
%
% Macros for footnotes
\def\QQfnmark#1{\footnotemark}

%
% Macros for indexing.
%
\@ifundefined{TCIMAKEINDEX}{}{\makeindex}%
%
% Attempts to avoid problems with other styles
\@ifundefined{abstract}{%
 \def\abstract{%
  \if@twocolumn
   \section*{Abstract (Not appropriate in this style!)}%
   \else \small
   \begin{center}{\bf Abstract\vspace{-.5em}\vspace{\z@}}\end{center}%
   \quotation
   \fi
  }%
 }{%
 }%
\@ifundefined{endabstract}{\def\endabstract
  {\if@twocolumn\else\endquotation\fi}}{}%
\@ifundefined{maketitle}{\def\maketitle#1{}}{}%
\@ifundefined{affiliation}{\def\affiliation#1{}}{}%
\@ifundefined{proof}{}{}%
\@ifundefined{endproof}{}{}%
\@ifundefined{newfield}{\def\newfield#1#2{}}{}%
\@ifundefined{chapter}{\def\chapter#1{\par(Chapter head:)#1\par }%
 \newcount\c@chapter}{}%
\@ifundefined{part}{\def\part#1{\par(Part head:)#1\par }}{}%
\@ifundefined{section}{\def\section#1{\par(Section head:)#1\par }}{}%
\@ifundefined{subsection}{\def\subsection#1%
 {\par(Subsection head:)#1\par }}{}%
\@ifundefined{subsubsection}{\def\subsubsection#1%
 {\par(Subsubsection head:)#1\par }}{}%
\@ifundefined{paragraph}{\def\paragraph#1%
 {\par(Subsubsubsection head:)#1\par }}{}%
\@ifundefined{subparagraph}{\def\subparagraph#1%
 {\par(Subsubsubsubsection head:)#1\par }}{}%
%%%%%%%%%%%%%%%%%%%%%%%%%%%%%%%%%%%%%%%%%%%%%%%%%%%%%%%%%%%%%%%%%%%%%%%%
% These symbols are not recognized by LaTeX
\@ifundefined{therefore}{}{}%
\@ifundefined{backepsilon}{}{}%
\@ifundefined{yen}{}{}%
\@ifundefined{registered}{%
   \def\registered{\relax\ifmmode{}\r@gistered
                    \else$\m@th\r@gistered$\fi}%
 \def\r@gistered{^{\ooalign
  {\hfil\raise.07ex\hbox{$\scriptstyle\rm\text{R}$}\hfil\crcr
  \mathhexbox20D}}}}{}%
\@ifundefined{Eth}{}{}%
\@ifundefined{eth}{}{}%
\@ifundefined{Thorn}{}{}%
\@ifundefined{thorn}{}{}%
% A macro to allow any symbol that requires math to appear in text
%
\@ifundefined{degree}{}{}%
%
% macros for T3TeX files
\newdimen\theight
\@ifundefined{Column}{\def\Column{%
 \vadjust{\setbox\z@=\hbox{\scriptsize\quad\quad tcol}%
  \theight=\ht\z@\advance\theight by \dp\z@\advance\theight by \lineskip
  \kern -\theight \vbox to \theight{%
   \rightline{\rlap{\box\z@}}%
   \vss
   }%
  }%
 }}{}%
\@ifundefined{qed}{\def\qed{%
 \ifhmode\unskip\nobreak\fi\ifmmode\ifinner\else\hskip5\p@\fi\fi
 \hbox{\hskip5\p@\vrule width4\p@ height6\p@ depth1.5\p@\hskip\p@}%
 }}{}%
\@ifundefined{cents}{}{}%
\@ifundefined{tciLaplace}{}{}%
\@ifundefined{tciFourier}{}{}%
\@ifundefined{textcurrency}{}{}%
\@ifundefined{texteuro}{}{}%
\@ifundefined{textfranc}{}{}%
\@ifundefined{textlira}{}{}%
\@ifundefined{textpeseta}{}{}%
\@ifundefined{miss}{\def\miss{\hbox{\vrule height2\p@ width 2\p@ depth\z@}}}{}%
\@ifundefined{vvert}{}{}%  %always translated to \left| or \right|
\@ifundefined{tcol}{\def\tcol#1{{\baselineskip=6\p@ \vcenter{#1}} \Column}}{}%
\@ifundefined{dB}{}{}%        %dummy entry in column
\@ifundefined{mB}{}{}%   %column entry
\@ifundefined{nB}{}{}%     %column entry (not math)
\@ifundefined{note}{}{}%
\def\newfmtname{LaTeX2e}
% No longer load latexsym.  This is now handled by SWP, which uses amsfonts if necessary
%
\ifx\fmtname\newfmtname
  \DeclareOldFontCommand{\rm}{\normalfont\rmfamily}{\mathrm}
  \DeclareOldFontCommand{\sf}{\normalfont\sffamily}{\mathsf}
  \DeclareOldFontCommand{\tt}{\normalfont\ttfamily}{\mathtt}
  \DeclareOldFontCommand{\bf}{\normalfont\bfseries}{\mathbf}
  \DeclareOldFontCommand{\it}{\normalfont\itshape}{\mathit}
  \DeclareOldFontCommand{\sl}{\normalfont\slshape}{\@nomath\sl}
  \DeclareOldFontCommand{\sc}{\normalfont\scshape}{\@nomath\sc}
\fi

%
% Greek bold macros
% Redefine all of the math symbols
% which might be bolded  - there are
% probably others to add to this list

\def\alpha{{\Greekmath 010B}}%
\def\beta{{\Greekmath 010C}}%
\def\gamma{{\Greekmath 010D}}%
\def\delta{{\Greekmath 010E}}%
\def\epsilon{{\Greekmath 010F}}%
\def\zeta{{\Greekmath 0110}}%
\def\eta{{\Greekmath 0111}}%
\def\theta{{\Greekmath 0112}}%
\def\iota{{\Greekmath 0113}}%
\def\kappa{{\Greekmath 0114}}%
\def\lambda{{\Greekmath 0115}}%
\def\mu{{\Greekmath 0116}}%
\def\nu{{\Greekmath 0117}}%
\def\xi{{\Greekmath 0118}}%
\def\pi{{\Greekmath 0119}}%
\def\rho{{\Greekmath 011A}}%
\def\sigma{{\Greekmath 011B}}%
\def\tau{{\Greekmath 011C}}%
\def\upsilon{{\Greekmath 011D}}%
\def\phi{{\Greekmath 011E}}%
\def\chi{{\Greekmath 011F}}%
\def\psi{{\Greekmath 0120}}%
\def\omega{{\Greekmath 0121}}%
\def\varepsilon{{\Greekmath 0122}}%
\def\vartheta{{\Greekmath 0123}}%
\def\varpi{{\Greekmath 0124}}%
\def\varrho{{\Greekmath 0125}}%
\def\varsigma{{\Greekmath 0126}}%
\def\varphi{{\Greekmath 0127}}%

\def\nabla{{\Greekmath 0272}}
\def\FindBoldGroup{%
   {\setbox0=\hbox{$\mathbf{x\global\edef\theboldgroup{\the\mathgroup}}$}}%
}

\def\Greekmath#1#2#3#4{%
    \if@compatibility
        \ifnum\mathgroup=\symbold
           \mathchoice{\mbox{\boldmath$\displaystyle\mathchar"#1#2#3#4$}}%
                      {\mbox{\boldmath$\textstyle\mathchar"#1#2#3#4$}}%
                      {\mbox{\boldmath$\scriptstyle\mathchar"#1#2#3#4$}}%
                      {\mbox{\boldmath$\scriptscriptstyle\mathchar"#1#2#3#4$}}%
        \else
           \mathchar"#1#2#3#4%
        \fi
    \else
        \FindBoldGroup
        \ifnum\mathgroup=\theboldgroup % For 2e
           \mathchoice{\mbox{\boldmath$\displaystyle\mathchar"#1#2#3#4$}}%
                      {\mbox{\boldmath$\textstyle\mathchar"#1#2#3#4$}}%
                      {\mbox{\boldmath$\scriptstyle\mathchar"#1#2#3#4$}}%
                      {\mbox{\boldmath$\scriptscriptstyle\mathchar"#1#2#3#4$}}%
        \else
           \mathchar"#1#2#3#4%
        \fi
      \fi}

\newif\ifGreekBold  \GreekBoldfalse
\let\SAVEPBF=\pbf
\def\pbf{\GreekBoldtrue\SAVEPBF}%

\@ifundefined{theorem}{\newtheorem{theorem}{Theorem}}{}
\@ifundefined{lemma}{\newtheorem{lemma}[theorem]{Lemma}}{}
\@ifundefined{corollary}{}{}
\@ifundefined{conjecture}{}{}
\@ifundefined{proposition}{\newtheorem{proposition}[theorem]{Proposition}}{}
\@ifundefined{axiom}{}{}
\@ifundefined{remark}{\newtheorem{remark}{Remark}}{}
\@ifundefined{example}{}{}
\@ifundefined{exercise}{}{}
\@ifundefined{definition}{\newtheorem{definition}{Definition}}{}

\@ifundefined{mathletters}{%
  \newcounter{equationnumber}
  \def\mathletters{%
     \addtocounter{equation}{1}
     \edef\@currentlabel{\theequation}%
     \setcounter{equationnumber}{\c@equation}
     \setcounter{equation}{0}%
     \edef\theequation{\@currentlabel\noexpand\alph{equation}}%
  }
  
}{}

%Logos
\@ifundefined{BibTeX}{%
    \def\BibTeX{{\rm B\kern-.05em{\sc i\kern-.025em b}\kern-.08em
                 T\kern-.1667em\lower.7ex\hbox{E}\kern-.125emX}}}{}%
\@ifundefined{AmS}%
    {\def\AmS{{\protect\usefont{OMS}{cmsy}{m}{n}%
                A\kern-.1667em\lower.5ex\hbox{M}\kern-.125emS}}}{}%
\@ifundefined{AmSTeX}{}{}%
%

% This macro is a fix to eqnarray
\def\@@eqncr{\let\@tempa\relax
    \ifcase\@eqcnt \def\@tempa{& & &}\or \def\@tempa{& &}%
      \else \def\@tempa{&}\fi
     \@tempa
     \if@eqnsw
        \iftag@
           \@taggnum
        \else
           \@eqnnum\stepcounter{equation}%
        \fi
     \fi
     \global\tag@false
     \global\@eqnswtrue
     \global\@eqcnt\z@\cr}

\def\TCItag{\@ifnextchar*{\@TCItagstar}{\@TCItag}}
\def\@TCItag#1{%
    \global\tag@true
    \global\def\@taggnum{(#1)}}
\def\@TCItagstar*#1{%
    \global\tag@true
    \global\def\@taggnum{#1}}
%
%%%%%%%%%%%%%%%%%%%%%%%%%%%%%%%%%%%%%%%%%%%%%%%%%%%%%%%%%%%%%%%%%%%%%
%
%
%
%
%
%
%
%
%
%
%
%
%
%
%
%
%
% Macros for text size operators:
%
%
%
%
%
%
%
%
%
%
%
%
%
%
%
%
%
%
%
%
%
%Macros for display size operators:
%
%
%
%
%
%
%
%
%
%
%
%
%
%
%
%
%
%
%

\if@compatibility\else
  \RequirePackage{amsmath}
  \makeatother
   
\fi

%%%%%%%%%%%%%%%%%%%%%%%%%%%%%%%%%%%%%%%%%%%%%%%%%%%%%%%%%%%%%%%%%%%%%%%%%%
% NOTE: The rest of this file is read only if in LaTeX 2.09 compatibility
% mode. This section is used to define AMS-like constructs in the
% event they have not been defined.
%%%%%%%%%%%%%%%%%%%%%%%%%%%%%%%%%%%%%%%%%%%%%%%%%%%%%%%%%%%%%%%%%%%%%%%%%%
\typeout{TCILATEX defining AMS-like constructs in LaTeX 2.09 COMPATIBILITY MODE}
\def\ExitTCILatex{\makeatother }

\bgroup
\ifx\ds@amstex\relax
   \message{amstex already loaded}\aftergroup\ExitTCILatex
\else
   \@ifpackageloaded{amsmath}%
      {\message{amsmath already loaded}\aftergroup\ExitTCILatex}
      {}
   \@ifpackageloaded{amstex}%
      {\message{amstex already loaded}\aftergroup\ExitTCILatex}
      {}
   \@ifpackageloaded{amsgen}%
      {\message{amsgen already loaded}\aftergroup\ExitTCILatex}
      {}
\fi
\egroup

%%%%%%%%%%%%%%%%%%%%%%%%%%%%%%%%%%%%%%%%%%%%%%%%%%%%%%%%%%%%%%%%%%%%%%%%
%  Macros to define some AMS LaTeX constructs when
%  AMS LaTeX has not been loaded
%
% These macros are copied from the AMS-TeX package for doing
% multiple integrals.
%
\let\DOTSI\relax
\def\RIfM@{\relax\ifmmode}%
\def\FN@{\futurelet\next}%
\newcount\intno@
\def\iint{\DOTSI\intno@\tw@\FN@\ints@}%
\def\iiint{\DOTSI\intno@\thr@@\FN@\ints@}%
\def\iiiint{\DOTSI\intno@4 \FN@\ints@}%
\def\idotsint{\DOTSI\intno@\z@\FN@\ints@}%
\def\ints@{\findlimits@\ints@@}%
\newif\iflimtoken@
\newif\iflimits@
\def\findlimits@{\limtoken@true\ifx\next\limits\limits@true
 \else\ifx\next\nolimits\limits@false\else
 \limtoken@false\ifx\ilimits@\nolimits\limits@false\else
 \ifinner\limits@false\else\limits@true\fi\fi\fi\fi}%
\def\multint@{\int\ifnum\intno@=\z@\intdots@                          %1
 \else\intkern@\fi                                                    %2
 \ifnum\intno@>\tw@\int\intkern@\fi                                   %3
 \ifnum\intno@>\thr@@\int\intkern@\fi                                 %4
 \int}%                                                               %5
\def\multintlimits@{\intop\ifnum\intno@=\z@\intdots@\else\intkern@\fi
 \ifnum\intno@>\tw@\intop\intkern@\fi
 \ifnum\intno@>\thr@@\intop\intkern@\fi\intop}%
\def\intic@{%
    \mathchoice{\hskip.5em}{\hskip.4em}{\hskip.4em}{\hskip.4em}}%
\def\negintic@{\mathchoice
 {\hskip-.5em}{\hskip-.4em}{\hskip-.4em}{\hskip-.4em}}%
\def\ints@@{\iflimtoken@                                              %1
 \def\ints@@@{\iflimits@\negintic@
   \mathop{\intic@\multintlimits@}\limits                             %2
  \else\multint@\nolimits\fi                                          %3
  \eat@}%                                                             %4
 \else                                                                %5
 \def\ints@@@{\iflimits@\negintic@
  \mathop{\intic@\multintlimits@}\limits\else
  \multint@\nolimits\fi}\fi\ints@@@}%
\def\intkern@{\mathchoice{\!\!\!}{\!\!}{\!\!}{\!\!}}%
\def\plaincdots@{\mathinner{\cdotp\cdotp\cdotp}}%
\def\intdots@{\mathchoice{\plaincdots@}%
 {{\cdotp}\mkern1.5mu{\cdotp}\mkern1.5mu{\cdotp}}%
 {{\cdotp}\mkern1mu{\cdotp}\mkern1mu{\cdotp}}%
 {{\cdotp}\mkern1mu{\cdotp}\mkern1mu{\cdotp}}}%
%
%
%  These macros are for doing the AMS \text{} construct
%
\def\RIfM@{\relax\protect\ifmmode}
\def\text{\RIfM@\expandafter\text@\else\expandafter\mbox\fi}
\let\nfss@text\text
\def\text@#1{\mathchoice
   {\textdef@\displaystyle\f@size{#1}}%
   {\textdef@\textstyle\tf@size{\firstchoice@false #1}}%
   {\textdef@\textstyle\sf@size{\firstchoice@false #1}}%
   {\textdef@\textstyle \ssf@size{\firstchoice@false #1}}%
   \glb@settings}

\def\textdef@#1#2#3{\hbox{{%
                    \everymath{#1}%
                    \let\f@size#2\selectfont
                    #3}}}
\newif\iffirstchoice@
\firstchoice@true
%
%These are the AMS constructs for multiline limits.
%
\def\Let@{\relax\iffalse{\fi\let\\=\cr\iffalse}\fi}%
\def\vspace@{\def\vspace##1{\crcr\noalign{\vskip##1\relax}}}%
\def\multilimits@{\bgroup\vspace@\Let@
 \baselineskip\fontdimen10 \scriptfont\tw@
 \advance\baselineskip\fontdimen12 \scriptfont\tw@
 \lineskip\thr@@\fontdimen8 \scriptfont\thr@@
 \lineskiplimit\lineskip
 \vbox\bgroup\ialign\bgroup\hfil$\m@th\scriptstyle{##}$\hfil\crcr}%
\def\Sb{_\multilimits@}%
\def\endSb{\crcr\egroup\egroup\egroup}%
\def\Sp{^\multilimits@}%

%
%
%These are AMS constructs for horizontal arrows
%
\newdimen\ex@
\ex@.2326ex
\def\rightarrowfill@#1{$#1\m@th\mathord-\mkern-6mu\cleaders
 \hbox{$#1\mkern-2mu\mathord-\mkern-2mu$}\hfill
 \mkern-6mu\mathord\rightarrow$}%
\def\leftarrowfill@#1{$#1\m@th\mathord\leftarrow\mkern-6mu\cleaders
 \hbox{$#1\mkern-2mu\mathord-\mkern-2mu$}\hfill\mkern-6mu\mathord-$}%
\def\leftrightarrowfill@#1{$#1\m@th\mathord\leftarrow
\mkern-6mu\cleaders
 \hbox{$#1\mkern-2mu\mathord-\mkern-2mu$}\hfill
 \mkern-6mu\mathord\rightarrow$}%
\def\overrightarrow{\mathpalette\overrightarrow@}%
\def\overrightarrow@#1#2{\vbox{\ialign{##\crcr\rightarrowfill@#1\crcr
 \noalign{\kern-\ex@\nointerlineskip}$\m@th\hfil#1#2\hfil$\crcr}}}%

\def\overleftarrow{\mathpalette\overleftarrow@}%
\def\overleftarrow@#1#2{\vbox{\ialign{##\crcr\leftarrowfill@#1\crcr
 \noalign{\kern-\ex@\nointerlineskip}$\m@th\hfil#1#2\hfil$\crcr}}}%
\def\overleftrightarrow{\mathpalette\overleftrightarrow@}%
\def\overleftrightarrow@#1#2{\vbox{\ialign{##\crcr
   \leftrightarrowfill@#1\crcr
 \noalign{\kern-\ex@\nointerlineskip}$\m@th\hfil#1#2\hfil$\crcr}}}%
\def\underrightarrow{\mathpalette\underrightarrow@}%
\def\underrightarrow@#1#2{\vtop{\ialign{##\crcr$\m@th\hfil#1#2\hfil
  $\crcr\noalign{\nointerlineskip}\rightarrowfill@#1\crcr}}}%

\def\underleftarrow{\mathpalette\underleftarrow@}%
\def\underleftarrow@#1#2{\vtop{\ialign{##\crcr$\m@th\hfil#1#2\hfil
  $\crcr\noalign{\nointerlineskip}\leftarrowfill@#1\crcr}}}%
\def\underleftrightarrow{\mathpalette\underleftrightarrow@}%
\def\underleftrightarrow@#1#2{\vtop{\ialign{##\crcr$\m@th
  \hfil#1#2\hfil$\crcr
 \noalign{\nointerlineskip}\leftrightarrowfill@#1\crcr}}}%
%%%%%%%%%%%%%%%%%%%%%

\def\qopnamewl@#1{\mathop{\operator@font#1}\nlimits@}
\let\nlimits@\displaylimits
\def\setboxz@h{\setbox\z@\hbox}

\def\varlim@#1#2{\mathop{\vtop{\ialign{##\crcr
 \hfil$#1\m@th\operator@font lim$\hfil\crcr
 \noalign{\nointerlineskip}#2#1\crcr
 \noalign{\nointerlineskip\kern-\ex@}\crcr}}}}

 \def\rightarrowfill@#1{\m@th\setboxz@h{$#1-$}\ht\z@\z@
  $#1\copy\z@\mkern-6mu\cleaders
  \hbox{$#1\mkern-2mu\box\z@\mkern-2mu$}\hfill
  \mkern-6mu\mathord\rightarrow$}
\def\leftarrowfill@#1{\m@th\setboxz@h{$#1-$}\ht\z@\z@
  $#1\mathord\leftarrow\mkern-6mu\cleaders
  \hbox{$#1\mkern-2mu\copy\z@\mkern-2mu$}\hfill
  \mkern-6mu\box\z@$}

\def\projlim{\qopnamewl@{proj\,lim}}
\def\injlim{\qopnamewl@{inj\,lim}}
\def\varinjlim{\mathpalette\varlim@\rightarrowfill@}
\def\varprojlim{\mathpalette\varlim@\leftarrowfill@}
\def\varliminf{\mathpalette\varliminf@{}}
\def\varliminf@#1{\mathop{\underline{\vrule\@depth.2\ex@\@width\z@
   \hbox{$#1\m@th\operator@font lim$}}}}
\def\varlimsup{\mathpalette\varlimsup@{}}
\def\varlimsup@#1{\mathop{\overline
  {\hbox{$#1\m@th\operator@font lim$}}}}

%
%Companion to stackrel
%
%
%
% These are AMS environments that will be defined to
% be verbatims if amstex has not actually been
% loaded
%
%
\begingroup \catcode `|=0 \catcode `[= 1
\catcode`]=2 \catcode `\{=12 \catcode `\}=12
\catcode`\\=12
|gdef|@alignverbatim#1\end{align}[#1|end[align]]
|gdef|@salignverbatim#1\end{align*}[#1|end[align*]]

|gdef|@alignatverbatim#1\end{alignat}[#1|end[alignat]]
|gdef|@salignatverbatim#1\end{alignat*}[#1|end[alignat*]]

|gdef|@xalignatverbatim#1\end{xalignat}[#1|end[xalignat]]
|gdef|@sxalignatverbatim#1\end{xalignat*}[#1|end[xalignat*]]

|gdef|@gatherverbatim#1\end{gather}[#1|end[gather]]
|gdef|@sgatherverbatim#1\end{gather*}[#1|end[gather*]]

|gdef|@gatherverbatim#1\end{gather}[#1|end[gather]]
|gdef|@sgatherverbatim#1\end{gather*}[#1|end[gather*]]

|gdef|@multilineverbatim#1\end{multiline}[#1|end[multiline]]
|gdef|@smultilineverbatim#1\end{multiline*}[#1|end[multiline*]]

|gdef|@arraxverbatim#1\end{arrax}[#1|end[arrax]]
|gdef|@sarraxverbatim#1\end{arrax*}[#1|end[arrax*]]

|gdef|@tabulaxverbatim#1\end{tabulax}[#1|end[tabulax]]
|gdef|@stabulaxverbatim#1\end{tabulax*}[#1|end[tabulax*]]

|endgroup

\def\align{\@verbatim \frenchspacing\@vobeyspaces \@alignverbatim
You are using the "align" environment in a style in which it is not defined.}

\@namedef{align*}{\@verbatim\@salignverbatim
You are using the "align*" environment in a style in which it is not defined.}
\expandafter\let\csname endalign*\endcsname =\endtrivlist

\def\alignat{\@verbatim \frenchspacing\@vobeyspaces \@alignatverbatim
You are using the "alignat" environment in a style in which it is not defined.}

\@namedef{alignat*}{\@verbatim\@salignatverbatim
You are using the "alignat*" environment in a style in which it is not defined.}
\expandafter\let\csname endalignat*\endcsname =\endtrivlist

\def\xalignat{\@verbatim \frenchspacing\@vobeyspaces \@xalignatverbatim
You are using the "xalignat" environment in a style in which it is not defined.}

\@namedef{xalignat*}{\@verbatim\@sxalignatverbatim
You are using the "xalignat*" environment in a style in which it is not defined.}
\expandafter\let\csname endxalignat*\endcsname =\endtrivlist

\def\gather{\@verbatim \frenchspacing\@vobeyspaces \@gatherverbatim
You are using the "gather" environment in a style in which it is not defined.}

\@namedef{gather*}{\@verbatim\@sgatherverbatim
You are using the "gather*" environment in a style in which it is not defined.}
\expandafter\let\csname endgather*\endcsname =\endtrivlist

\def\multiline{\@verbatim \frenchspacing\@vobeyspaces \@multilineverbatim
You are using the "multiline" environment in a style in which it is not defined.}

\@namedef{multiline*}{\@verbatim\@smultilineverbatim
You are using the "multiline*" environment in a style in which it is not defined.}
\expandafter\let\csname endmultiline*\endcsname =\endtrivlist

\def\arrax{\@verbatim \frenchspacing\@vobeyspaces \@arraxverbatim
You are using a type of "array" construct that is only allowed in AmS-LaTeX.}

\def\tabulax{\@verbatim \frenchspacing\@vobeyspaces \@tabulaxverbatim
You are using a type of "tabular" construct that is only allowed in AmS-LaTeX.}

\@namedef{arrax*}{\@verbatim\@sarraxverbatim
You are using a type of "array*" construct that is only allowed in AmS-LaTeX.}
\expandafter\let\csname endarrax*\endcsname =\endtrivlist

\@namedef{tabulax*}{\@verbatim\@stabulaxverbatim
You are using a type of "tabular*" construct that is only allowed in AmS-LaTeX.}
\expandafter\let\csname endtabulax*\endcsname =\endtrivlist

% macro to simulate ams tag construct

% This macro is a fix to the equation environment
 \def\endequation{%
     \ifmmode\ifinner % FLEQN hack
      \iftag@
        \addtocounter{equation}{-1} % undo the increment made in the begin part
        $\hfil
           \displaywidth\linewidth\@taggnum\egroup \endtrivlist
        \global\tag@false
        \global\@ignoretrue
      \else
        $\hfil
           \displaywidth\linewidth\@eqnnum\egroup \endtrivlist
        \global\tag@false
        \global\@ignoretrue
      \fi
     \else
      \iftag@
        \addtocounter{equation}{-1} % undo the increment made in the begin part
        \eqno \hbox{\@taggnum}
        \global\tag@false%
        $$\global\@ignoretrue
      \else
        \eqno \hbox{\@eqnnum}% $$ BRACE MATCHING HACK
        $$\global\@ignoretrue
      \fi
     \fi\fi
 }

 \newif\iftag@ \tag@false

 \def\TCItag{\@ifnextchar*{\@TCItagstar}{\@TCItag}}
 \def\@TCItag#1{%
     \global\tag@true
     \global\def\@taggnum{(#1)}}
 \def\@TCItagstar*#1{%
     \global\tag@true
     \global\def\@taggnum{#1}}

  \@ifundefined{tag}{
     \def\tag{\@ifnextchar*{\@tagstar}{\@tag}}
     \def\@tag#1{%
         \global\tag@true
         \global\def\@taggnum{(#1)}}
     \def\@tagstar*#1{%
         \global\tag@true
         \global\def\@taggnum{#1}}
  }{}

%
%
%
%
%

% Do not add anything to the end of this file.
% The last section of the file is loaded only if
% amstex has not been.
\makeatother

\begin{document}
\title[Rough BSDEs]{Backward stochastic differential equations with rough drivers}
\author{Joscha Diehl, Peter Friz}
\address{The first author (diehl@math.tu-berlin.de) is affiliated to TU\ Berlin, the second author (friz@math.tu-berlin.de) is affiliated to TU\ and WIAS Berlin.}

\begin{abstract}
Backward stochastic differential equations (BSDEs) in the sense of
Pardoux-Peng [Backward stochastic differential equations and
quasilinear parabolic partial differential equations, Lecture Notes in
Control and Inform. Sci., 176, 200--217, 1992] provide a
non-Markovian extension to certain classes of non-linear partial
differential equations; the non-linearity is expressed in the
so-called driver of the BSDE. Our aim is to deal with drivers which
have very little regularity in time. To this end we establish
continuity of BSDE solutions with respect to rough path metrics in the
sense of Lyons [Differential equations driven by rough signals. Rev.
Mat. Iberoamericana 14, no. 2, 215--310, 1998] and so obtain a notion
of "BSDE with rough driver". Existence, uniqueness and a version of
Lyons' limit theorem in this context are established. Our main tool,
aside from rough path analysis, is the stability theory for quadratic
BSDEs due to Kobylanski [Backward stochastic differential equations and
partial differential equations with quadratic growth. Ann. Probab.,
28(2):558--602, 2000].

\end{abstract}

\keywords{BSDEs, SPDEs, rough path theory.}
\maketitle

% Date: \today{ }\xxivtime

\section{Introduction}

\bigskip We recall that \textit{backward stochastic differential equations}
(BSDEs) are stochastic equations of the type%
\begin{equation}
  Y_{t} = \xi + \int_{t}^{T} f\left( r,Y_{r},Z_{r} \right) dr
            -\int_{t}^{T}Z_{r}dW_{r}.  \label{BSDE_ModelEqu}
\end{equation}%
Here, $W$ is an $m$-dimensional Brownian motion on some filtered probability
space $\left( \Omega ,\mathcal{F},\left( \mathcal{F}_{t}\right) _{0\leq
t\leq T},\mathbb{P}\right) $. The \textit{terminal data} $\xi $ is assumed
to be $\mathcal{F}_{T}$-measurable, the \textit{driver} $f:\Omega \times \left[ 0,T\right]
\times \mathbb{R\times }\mathbb{R}^{m} \to \R$ is a predictable
random field; a solution to this equation is a $\left( 1+m\right) $%
-dimensional adapted solution process of the form $\left( Y_{t},Z_{t}\right)
_{0\leq t\leq T}$; subject to some integrability properties depending on the
framework imposed by the type of assumptions on $f$. Equation (\ref%
{BSDE_ModelEqu}) can also be written in differential form%
\begin{equation*}
  -dY_{t} = f\left( t, Y_{t}, Z_{t} \right) dt - Z_{t}dW_t.
\end{equation*}%
The aim of this paper, partially motivated from the recent progress on
partial differential equations driven by rough path
\cite{bibCaruanaFriz,bibCaruanaFrizOberhauser,bibGubinelliTindel,bibDeyaGubinelliTindel,bibTeichmann},
is to consider  
\begin{equation*}
  -dY_{t} = f\left( t,Y_{t},Z_{t} \right) dt
          + H(Y_t) d\rp_t
          - Z_{t}dW_t,
\end{equation*}%
where $\rp$ is (at first) a smooth $d$-dimensional driving signal -
accordingly $H=\left( H_{1},\dots ,H_{d}\right)$ - followed by a discussion in
which we establish \textit{rough path stability} of the solution process $\left( Y,Z \right)$
as a function of $\rp $. Note that we do \textit{not}
establish any sort of rough path stability in $W$.
Indeed when $f\equiv 0$ in \eqref{BSDE_ModelEqu}, BSDE theory reduces to martingale representation,
an intrinsically stochastic result which does not seem amenable to
a rough pathwise approach. \footnote{See however the recent work of Liang et al. \cite{bibLiangLyonsQian} in which martingale
representation is replaced by an abstract transformation.}
We are able to carry out our analysis in a framework in which
the $\omega$-dependence of the terms driven by $\rp$ factorizes through an It\^{o}
diffusion process. That is, we consider, for fixed $\left( t_0,x_0 \right) \in [0,T] \times \R^n$, 
\begin{eqnarray*}
  dX_{t} &=& \itob\left( \omega; t \right) dt + \sigma \left( \omega; t \right) dW_{t}
,\,\,\,t_{0}\leq t\leq T;\,\,\,\,X_{t_0}=x_0\in \mathbb{R}^{n}, \\
-dY_{t} &=&f\left( \omega; t,Y_{t},Z_{t} \right) dt+H\left(
X_{t},Y_{t}\right) d\rp - Z_{t}dW,\text{ \ \ } t_0\leq t\leq
T;\,\,\,\,Y_{T}=\xi \in L^{\infty }\left( \mathcal{F}_{T}\right) .
\end{eqnarray*}%
Our main-result is, under suitable conditions on $f$ and $H=\left(
H_{1},\dots ,H_{d}\right) $, that any sequence $\left( \rp ^{n}\right) $
which is Cauchy in rough path metric gives rise to a solution $\left(
Y,Z\right) $ of the \textit{BSDE with rough driver} 
\begin{equation}
  -dY_{t}=f\left( \omega; t,Y_{t},Z_{t} \right) dt+H\left( X_{t},Y_{t}\right)
d\mathbf{\rp }-Z_{t}dW_t,  \label{RoughBSDE_ModelEqu}
\end{equation}
where $\mathbf{\rp }$ denotes the (rough path)\ limit of $\left( \rp
^{n}\right) $ and where indeed $\left( Y,Z\right) $ depends only on $\mathbf{%
\rp }$ and not on the particular approximating sequence. An interesting
feature of this result, which somehow encodes the particular structure of
the above equation, is that one does \textit{not} need to construct resp.
understand the iterated integrals of $\rp $ and $W$; but only those of $%
\rp $ which i$_{{}}$s tantamout to speak of the rough path $\mathbf{\rp }
$. This is in strict contrast to the usual theory of rough differential
equations in which both $d\rp $ and $dW$ figure as driving differentials,
e.g. in equations of the form $dy = V_1(y) d\rp + V_2(y) dW$.

\bigskip 

If we specialize to a fully Markovian setting, say
$\xi = g\left( X_{T}\right) ,\, \sigma \left( \omega; t
\right) = \sigma \left ( t, X_t (\omega) \right), \itob \left( \omega; t
\right) = b \left( t, X_t (\omega) \right),$ $f\left( \omega; t,y,z
\right) =f\left( t, X_{t}\left( \omega \right) ,y,z\right) ,\,H=H\left(
X_{t},Y_{t}\right) $, we find that the solution to (\ref{RoughBSDE_ModelEqu}%
), evaluated at $t=t_{0}$, yields a solution to the (terminal value problem
of the) \textit{rough partial differential equation}%
\begin{equation*}
  -du=\left( \mathcal{L}u\right) dt + f\left( t,x,u,Du\ \sigma(t,x) \right) dt
    +H\left( x,u\right) d\mathbf{\rp },\,\,\,u_{T}\left( x\right) =g\left( x\right),
\end{equation*}%
where $\mathcal{L}$ denotes the generator of $X$. If one is interested in
the Cauchy problem, $\tilde{u}\left( t,x\right) =u\left( T-t,x\right) $
satisfies,%
\begin{equation}
  d\tilde{u}=\left( \mathcal{L}\tilde{u}\right) dt+f\left( x,\tilde{u}, D\tilde{%
u}\ \sigma(t,x) \right) dt+H\left( x,\tilde{u}\right) d\mathbf{\tilde{\rp}},\,\,\,\tilde{%
u}_{0}\left( x\right) =g\left( x\right),   \label{RPDE_ModelEquCauchy}
\end{equation}%
where $\mathbf{\tilde{\rp}=\rp }\left( T-\cdot _{{}}\right) $.

To the best of our knowledge, (\ref{RoughBSDE_ModelEqu}) is the first
attempt to introduce rough path methods
\cite{bibLyons94,bibLyonsQian02,bibLyonsCaruanaLevy04,bibFrizVictoir}
in the field of backward stochastic differential equations
\cite{bibPardouxPeng96,bibElKarouiPengQuenez,bibKobylanski}.
Of course, there are many hints in the
literature towards the possibility of doing so: we mention in particular the
Pardoux-Peng \cite{bibPardouxPengBDSDEs} theory of \textit{backward doubly stochastic differential
equations} (BDSDEs) which  amounts to replacing $d\mathbf{\rp }$ in (%
\ref{RoughBSDE_ModelEqu}) by another set of Brownian differentials, say $dB$%
, independent of $W$. This theory was then employed by Buckdahn and Ma \cite{bibBuckdahnMaI} to
construct (stochastic viscosity) solutions to (\ref{RPDE_ModelEquCauchy}) with $d%
\mathbf{\rp }$ replaced by a Brownian differential and the assumption
that the vector fields $H_{1}\left( x,\cdot \right) ,\dots ,H_{d}\left(
x,\cdot \right) $ commute.  

This paper is structured as follows.
In Section \ref{sectBSDEWithRoughDriver} we state and prove our main result concerning
the existence and uniqueness of BSDEs with rough drivers.
Section \ref{sectTheMarkovianSetting} specializes the setting to a purely Markovian one.
In this context BSDEs with rough drivers are connected to
rough partial differential equations, which we analyze in their own right.
In Section \ref{sectConnectiontoBDSDEs} we establish the connection to BDSDEs.

\bigskip 

\section{BSDE With Rough Driver}
\label{sectBSDEWithRoughDriver}

We fix once and for all a filtered probability space $(\Omega, \F, (\F)_t, \P)$, which carries a
$m$-dimensional Brownian motion $W$. Let $\F_t$ be the usual filtration of $W$.
Denote by $H^2_{[0,T]}(\R^m)$ the space of predictable processes $X$ in $\R^m$
such that $||X||^2 := \E[ \int_0^T |X|^2_r dr ] < \infty$.
Denote by $H^{\infty}_{[0,T]}(\R)$ the space of predictable processes that are almost surely bounded
with the topology of $\P$-a.s. convergence uniformly on $[0,T]$.
For a random variable $\xi$ we denote by $||\xi||_{\infty}$ its essential supremum,
for a process $Y$ we denote by $||Y||_{\infty}$ the essential supremum of $\sup_{0\le t\le T} |Y_t|$.

For a smooth path $\rp$ in $\R^d$ and $\xi \in L^{\infty}(\F_T)$ we consider the BSDE
\begin{align}
  \label{eqBSDEUntransformedSmooth}
  Y_t
  %&= \xi + \int_t^T f(r, Y_r, Z_r) dr + \sum_{k=1}^d \int_t^T H_k( X_r, Y_r ) d \rp^k(r) - \sum_{i=1}^m \int_t^T Z_i(r) dW^i_r \notag \\
  &= \xi + \int_t^T f(r, Y_r, Z_r) dr + \int_t^T H( X_r, Y_r ) d\rp(r) - \int_t^T Z_r dW_r, \quad t\le T,
\end{align}
where the $\R^n$-valued diffusion $X$ has the form
\begin{align*}
  X_t &= x + \int_0^t \sigma_r dW_r + \int_0^t \itob_r dr.
\end{align*}
Here,
$H = (H_1, \dots, H_d)$ with
$H_k: \R^n \times \R \to \R, k=1, \dots, d$ and
$\int_t^T H(X_r,Y_r) d\rp(r) := \sum_{k=1}^d \int_t^T H_k(X_r,Y_r) \dot{\rp}^k(r) dr$.
$W$ is an $m$-dimensional Brownian motion (hence $Z$ is a row vector taking values in $\R^{m\times 1}$ identified with $\R^m$).
$f: \Omega \times [0,T] \times \R \times \R^m \to \R$ is a predictable random function,
$x \in \R^n$,
$\sigma$ is a predictable process taking values in $\R^{n \times m}$,
$\itob$ is a predictable process taking values in $\R^n$.

\begin{definition}
  We call equation \eqref{eqBSDEUntransformedSmooth} \textit{BSDE with data $( \xi, f, H, \rp )$}.
\end{definition}

For a vector $x$ we denote the Euclidean norm as usual by $|x|$.
For a matrix $X$ we denote by $|X|$, depending on the situation,
either
the $1$-norm (operator norm),
the $2$-norm (Euclidean norm) or
the $\infty$-norm (operator norm of the transpose).
This slight abuse of notation will not lead to confusion,
as all inequalities will be valid up to multiplicative constants.

We introduce the following assumptions:

\begin{itemize}
  \item[(A1)]
    There exists a constant $C_\sigma > 0$ such that for $t \in [0,T]$
    \begin{align*}
      |\sigma_t(\omega)| \le C_\sigma \quad \P-a.s.
    \end{align*}

  \item[(A2)]
    There exists a constant $C_\itob > 0$ such that for $t \in [0,T]$
    \begin{align*}
      |\itob_t(\omega)| \le C_\itob \quad \P-a.s.
    \end{align*}

  \item[(F1)]
    There exists a constant $C_{1,f} > 0$ such that for $(t,y,z) \in [0,T]\times\R^n\times\R^m$
    \begin{align*}
      &|f(\omega;t,y,z)| \le C_{1,f} + C_{1,f} |z|^2 \quad \P-a.s., \\
      &|\partial_z f(\omega;t,y,z)| \le C_{1,f} + C_{1,f} |z| \quad \P-a.s.
    \end{align*}

  \item[(F2)]
    There exists a constant $C_{2,f} > 0$ such that for $(t,y,z) \in [0,T]\times\R^n\times\R^m$
    \begin{align*}
      \partial_y f(\omega;t,y,z) \le C_{2,f} \quad \P-a.s.
    \end{align*}
\end{itemize}

For given real numbers $\gamma > p \ge 1$ we have the following assumption:
\begin{itemize}

  \item[($H_{p,\gamma}$)]
    Let $H(x,\cdot) = (H_1(x,\cdot), \dots, H_d(x,\cdot))$ be a collection of vector fields on $\R$,
    parameterized by $x \in \R^n$.
    Assume that for some $C_H > 0$, we have joint regularity of the form
    \begin{align*}
      \sup_{i=1,\dots,d} |H_i|_{\Lip^{\gamma+2}(\R^{n+1})} \le C_H.
    \end{align*}
\end{itemize}

As a consequence of Theorem 2.3 and Theorem 2.6 in \cite{bibKobylanski}, we get the following
\begin{lemma}
  \label{lemBSDEExistence}
  Assume (A1), (A2), (F1), (F2) and let $H$ be Lipschitz on $\R^n \times \R$.
  Let $\xi \in L^{\infty}(\F_T)$ and a smooth path $\rp$ be given and let $\phi$ be the corresponding flow defined in \eqref{eqFlow}.
  Then there exists a unique solution to the BSDE with data $(\xi, f, H, \rp)$.
\end{lemma}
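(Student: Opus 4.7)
The proof plan is to reduce the BSDE with smooth driver $\rp$ to a standard quadratic BSDE to which Kobylanski's theorems apply, via the flow transformation $\phi$ alluded to in the statement. Since $\rp$ is smooth and $H(x,\cdot)$ is Lipschitz uniformly in $x$, the ODE flow
\[
  \frac{d}{dt}\phi_t(x,y) = \sum_{k=1}^d H_k(x,\phi_t(x,y))\,\dot{\rp}^k(t), \qquad \phi_0(x,y)=y,
\]
is well-defined, $\mathcal{C}^{1}$ in $(x,y)$, with bijective $y$-slices. Introduce the candidate substitution $Y_t = \phi_t(X_t,\tY_t)$ and look for a pair $(\tY,\tZ)$ solving a classical BSDE with terminal data $\tilde{\xi} := \phi_T^{-1}(X_T,\xi)$.

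First, I would apply It\^o's formula to $\phi_t(X_t,\tY_t)$, assuming $\tY$ is a semimartingale of the form $d\tY_t = \tilde{a}_t\, dt + \tilde{Z}_t\, dW_t$. The key cancellation is
\[
  \partial_t\phi_t(X_t,\tY_t)\,dt = H(X_t,Y_t)\,d\rp_t,
\]
which exactly removes the rough driver from \eqref{eqBSDEUntransformedSmooth}. Matching the $dW$ terms and the $dt$ terms with the original equation yields
\[
  Z_t = \partial_x\phi_t(X_t,\tY_t)\,\sigma_t + \partial_y\phi_t(X_t,\tY_t)\,\tZ_t,
\]
and produces a classical BSDE $-d\tY_t = \tilde{f}(t,\tY_t,\tZ_t)\,dt - \tZ_t\,dW_t$, $\tY_T=\tilde{\xi}$, with a new driver $\tilde f$ obtained by collecting the It\^o correction terms involving $\partial_{xx}\phi$, $\partial_{xy}\phi$, $\partial_{yy}\phi$ (contracted with $\sigma\sigma^\top$), the drift from $\itob$, and the original $f$ evaluated at the substituted arguments.

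Next I would verify that $\tilde{f}$ meets Kobylanski's hypotheses. Assumptions (A1), (A2), ($H_{p,\gamma}$) and the smoothness of $\rp$ ensure that $\phi_t$ and its derivatives up to order two are uniformly bounded on $[0,T]\times \R^n\times K$ for any compact $K\subset\R$; since $\xi$ is bounded, the a priori $L^\infty$-estimate for $\tY$ is immediate from that of $Y$ (via $\phi^{-1}$). Given this boundedness, the quadratic growth (F1) of $f$ in $z$ translates, through the affine relation $Z = \partial_x\phi\,\sigma + \partial_y\phi\,\tZ$, to quadratic growth of $\tilde{f}$ in $\tZ$; similarly, (F2) and the bounded derivatives of $\phi$ give the required monotonicity/Lipschitzness of $\tilde{f}$ in $\tY$. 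Boundedness of $\tilde{\xi}$ is clear.

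I then invoke Theorem 2.3 of \cite{bibKobylanski} for existence of a bounded solution $(\tY,\tZ)\in H^\infty_{[0,T]}(\R)\times H^2_{[0,T]}(\R^m)$ and Theorem 2.6 for uniqueness in this class. Setting $Y_t := \phi_t(X_t,\tY_t)$ and $Z_t := \partial_x\phi_t(X_t,\tY_t)\,\sigma_t + \partial_y\phi_t(X_t,\tY_t)\,\tZ_t$ gives a solution of the original BSDE with the desired integrability (boundedness of $Y$ from boundedness of $\tY$ and $\phi$, and the $H^2$-property of $Z$ from that of $\tZ$ together with the boundedness of $\sigma$ and the derivatives of $\phi$). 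Uniqueness for $(Y,Z)$ is obtained by running the transformation backwards: any bounded solution of \eqref{eqBSDEUntransformedSmooth} yields, via $\tY = \phi^{-1}(X,Y)$, a solution of the transformed BSDE which must coincide with the Kobylanski solution, and then $Z$ is determined by the affine relation above.

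The main obstacle is step three: verifying that the transformed driver inherits Kobylanski's quadratic-growth and structural conditions. Because $\tilde f$ picks up terms like $\partial_y f\cdot\partial_x\phi\,\sigma$ and Itô corrections of order two in the derivatives of $\phi$, one must check that none of these destroy the quadratic bound, which is precisely where the boundedness of $\sigma$ (A1), $\itob$ (A2), and the $C^2$-bounds on $\phi$ coming from $H\in\operatorname{Lip}$ and the smoothness of $\rp$ enter decisively.
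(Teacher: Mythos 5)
Your proposal takes a genuinely different route from the paper. The paper obtains this Lemma with no transformation at all: it treats the combined driver $g(\omega;t,y,z) = f(\omega;t,y,z)+\sum_k H_k(X_t(\omega),y)\dot\rp^k(t)$ as a single Kobylanski-type driver. Since $\rp$ is smooth, $\dot\rp$ is bounded on $[0,T]$; together with (F1), (F2) and Lipschitz $H$ this gives quadratic growth in $z$ and a one-sided bound on $\partial_y g$, so Kobylanski's Theorems 2.3 and 2.6 apply directly to \eqref{eqBSDEUntransformedSmooth}. You instead transform via the flow, solve the transformed quadratic BSDE, and transform back. That is exactly the strategy the paper deploys in Lemma \ref{lemBSDETransformation} and Theorem \ref{thmBSDEConvergence} in order to get bounds that are \emph{uniform along a rough-path Cauchy sequence} $\rp^n$; it is not needed here, where a single smooth $\rp$ is fixed.

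Two concrete issues with your route. First, a sign/convention error: with a forward flow, $\phi_0(x,y)=y$ and $\partial_t\phi=H\dot\rp$, the claimed cancellation fails. Matching $d\bigl(\phi_t(X_t,\tY_t)\bigr)$ against $-dY_t = f\,dt + H\,d\rp - Z\,dW$ leaves a residual $-2H\dot\rp\,dt$ rather than zero. You need the paper's backward convention $\phi(T,x,y)=y$, $\partial_t\phi=-H\dot\rp$ (equivalently $\phi(t,x,y)=y+\int_t^T H\,d\rp$), which also makes $\tY_T = \xi$ with no $\phi_T^{-1}$ to compute. Second, and more seriously, your route needs more regularity of $H$ than the Lemma grants. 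The hypothesis is only that $H$ is Lipschitz on $\R^n\times\R$: enough for the ODE flow to exist, but not for $\phi$ to be $C^2$ in $(x,y)$, which your It\^o computation and the resulting driver $\tilde f$ (containing $\partial_{xx}\phi,\partial_{xy}\phi,\partial_{yy}\phi$) require. You invoke $(H_{p,\gamma})$ in the middle of the argument, but that is not among the Lemma's hypotheses. Under the stated assumptions the transformation is unavailable, and the direct citation to Kobylanski on \eqref{eqBSDEUntransformedSmooth} is the argument the paper intends.
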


We want to give meaning to equation \eqref{eqBSDEUntransformedSmooth}, where the smooth path $\rp$ is replaced by
a general geometric rough path $\mathbf{\rp} \in C^{\pvar}( [0,T], G^{[p]}(\R^d) )$
\footnote{ In a Brownian context one can take $2 < p < 3$ and $G^{[p]}(\R^d) \cong \R^d \oplus so(d)$ is the state space for $d$-dimensional Brownian motion and it's L\'evy area. More generally, $G^{[p]}(\R^d)$ is the "correct" state space for a geometric p-rough path; the space of such paths subject to $p$-variation regularity (in rough path sense) yields a complete metric space under $p$-variation rough path metric. Technical details of geometric rough path spaces (as found e.g. in section 9 of \cite{bibFrizVictoir}) will not be necessary for the understanding of the present paper. }.
We present our main result, the proof of which we present at the end of the section.
%Our main result is the following
\begin{theorem}
  \label{thmBSDEConvergence}
  Let $p \ge 1$, $\gamma > p$
  and $\rp^n, n=1,2,\dots,$ be smooth paths in $\R^d$.
  Assume $\rp^n \to \mathbf{\rp}$ in $p$-variation, for a $\mathbf{\rp} \in C^{\pvar}( [0,T], G^{[p]}(\R^d) )$.
  Let $\xi \in L^{\infty}(\F_T)$.
  Let $f$ be a random function satisfying (F1) and (F2).
  Moreover, assume (A1), (A2) and ($H_{p,\gamma}$).
  For $n\ge 1$ denote by $(Y^n, Z^n)$ the solutions to the BSDE with data $( \xi, f, H, \rp^n )$.

  Then there exists a process $(Y, Z) \in H^{\infty}_{[0,T]} \times H^2_{[0,T]}$ such that
  \begin{align*}
    &Y^n \to Y \quad \text{ uniformly on } [0,T]\ \P-a.s., \\
    &Z^n \to Z \quad \text{ in } H^2_{[0,T]}.
  \end{align*}

  The process is unique in the sense, that it only depends on the limiting rough path $\mathbf{\rp}$
  and not on the approximating sequence.
 \newpage We write (formally \footnote{The "integral"  $\int H( X, Y ) d \mathbf{\rp}$ is {\it not} a rough integral defined in the usual rough path theory (e.g. \cite{bibLyonsQian02} or \cite{bibFrizVictoir}); regularity issues aside one misses the iterated integrals of $X$ (and thus $W$) against those of $\rp$. For what it's worth, in the present context (\ref{eqBSDEUntransformedRough}) can be taken as an implicit definition of $\int H( X, Y ) d \mathbf{\rp}$. (Somewhat similar in spirit: F\"ollmer's It\^o's integral which appears in his It\^o formula {\it sans probabilit\'e}.) More pragmatically, notation (\ref{eqBSDEUntransformedRough}) is justified {\it a posteriori} through our uniquess result; in addition it is consistent with standard BSDE notation when 
 $\mathbf{\rp}$ happens to be a smooth path. })
  
  \begin{align}
    \label{eqBSDEUntransformedRough}
    Y_t &= \xi + \int_t^T f(r, Y_r, Z_r) dr + \int_t^T H( X_r, Y_r ) d \mathbf{\rp}(r) - \int_t^T Z_r dW_r.
  \end{align}

  Moreover, the solution mapping
  \begin{align*}
    C^{\pvar}([0,T], G^{[p]}(\R^d)) \times L^{\infty}(\F_T) &\to H^{\infty}_{[0,T]} \times H^2_{[0,T]}, \\
    (\mathbf{\rp}, \xi) &\mapsto (Y, Z)
  \end{align*}
  is continuous.
\end{theorem}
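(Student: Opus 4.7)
The plan is to remove the rough integral by a Doss--Sussmann-type transformation, thereby reducing the problem to a stability statement for a classical quadratic BSDE to which Kobylanski's theory applies. For each smooth $\rp^n$ (and later for the limit $\mathbf{\rp}$) I introduce the flow $\phi^{\rp}_t(\omega,x,y)$ defined pathwise in $\omega$ as the solution of the ODE (respectively RDE)
$$\partial_t \phi^{\rp}_t(x,y) = H(x, \phi^{\rp}_t(x,y))\, \dot{\rp}_t, \qquad \phi^{\rp}_0(x,y)=y$$
(up to a sign convention chosen to cancel the driver in what follows). By assumption $(H_{p,\gamma})$ and Lyons' universal limit theorem, this flow is well defined for any $\mathbf{\rp}\in C^{\pvar}([0,T],G^{[p]}(\R^d))$, is $C^2$ in $(x,y)$ with bounds and moduli of continuity controlled by $\|\mathbf{\rp}\|_{\pvar}$, and depends continuously on $\mathbf{\rp}$ in $p$-variation rough path metric.

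Next I set $\tY^n_t := (\phi^{\rp^n}_t(X_t,\cdot))^{-1}(Y^n_t)$ and apply It\^o's formula. Because $\partial_t \phi^{\rp^n}_t$ produces exactly the $H(X,Y)\,\dot{\rp}^n\,dt$ contribution, the rough driver cancels and $(\tY^n,\tZ^n)$, with
$$\tZ^n_t := (\partial_y \phi^{\rp^n}_t)^{-1}\bigl(Z^n_t - \partial_x \phi^{\rp^n}_t\,\sigma_t\bigr) \quad\text{(evaluated at $(X_t,\tY^n_t)$),}$$
solves a driverless BSDE
$$-d\tY^n_t = \tilde{f}^{\rp^n}(\omega;t,\tY^n_t,\tZ^n_t)\,dt - \tZ^n_t\,dW_t, \qquad \tY^n_T = (\phi^{\rp^n}_T(X_T,\cdot))^{-1}(\xi),$$
where $\tilde{f}^{\rp^n}$ is the explicit expression built from $f$, the coefficients $\sigma_t,\itob_t$, and the derivatives $\partial_x\phi, \partial_y\phi, \partial^2_{xx}\phi, \partial^2_{xy}\phi, \partial^2_{yy}\phi$ evaluated at $(X_t,\tY^n_t)$. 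Existence and uniqueness of this transformed BSDE (for each $n$) is covered by Lemma~\ref{lemBSDEExistence}.

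The main task, and the main obstacle, is then to verify that $\tilde{f}^{\rp}$ satisfies the quadratic-growth hypotheses (F1), (F2) with constants that are locally \emph{uniform} in $\|\mathbf{\rp}\|_{\pvar}$, and that $\rp \mapsto \tilde{f}^{\rp}$ together with $\rp \mapsto (\phi^{\rp}_T(X_T,\cdot))^{-1}(\xi)$ are continuous in the rough path topology. Quadratic growth in $\tZ$ is preserved since $(\partial_y\phi)^{-1}$ is bounded on bounded sets of $\tY$ and multiplies $Z$ linearly, whereas the growth of $f$ in $z$ is quadratic; the extra It\^o--correction terms of the form $\tfrac{1}{2}\partial^2_{xx}\phi\,|\sigma|^2$, $\partial_x\phi\,\itob$, $\partial^2_{xy}\phi\,\sigma\tZ$, $\tfrac{1}{2}\partial^2_{yy}\phi\,|\tZ|^2$ are controlled thanks to (A1), (A2), the a~priori boundedness of $\tY^n$ inherited from the boundedness of $Y^n$ (Kobylanski a~priori estimates), and the locally uniform flow estimates from rough path continuity.

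Once this uniformity is in place, Theorem~2.6 of \cite{bibKobylanski} (stability of quadratic BSDEs) delivers $\tY^n \to \tY$ uniformly on $[0,T]$ $\P$-a.s.\ and $\tZ^n \to \tZ$ in $H^2_{[0,T]}$, where $(\tY,\tZ)$ is the unique solution of the BSDE with driver $\tilde{f}^{\mathbf{\rp}}$ and terminal condition $(\phi^{\mathbf{\rp}}_T(X_T,\cdot))^{-1}(\xi)$. Reversing the transformation by setting $Y_t := \phi^{\mathbf{\rp}}_t(X_t,\tY_t)$ and $Z_t := \partial_x\phi^{\mathbf{\rp}}_t\,\sigma_t + \partial_y \phi^{\mathbf{\rp}}_t\,\tZ_t$ produces the claimed limit $(Y,Z) \in H^{\infty}_{[0,T]} \times H^2_{[0,T]}$. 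The independence of $(Y,Z)$ from the approximating sequence, and the continuity of the solution map $(\mathbf{\rp},\xi) \mapsto (Y,Z)$, both follow at once from the uniqueness for the transformed BSDE together with the continuity of $\mathbf{\rp} \mapsto \phi^{\mathbf{\rp}}$ in $p$-variation rough path metric.
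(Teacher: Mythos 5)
Your overall strategy is the same as the paper's: a Doss--Sussmann transformation via the solution flow of the RDE $d\phi = H(x,\phi)\,d\rp$, which turns the BSDE with rough driver into a driverless quadratic BSDE, followed by an appeal to Kobylanski's stability theory. Your formulas for $\tY,\tZ$ and the transformed driver $\tilde f$ match (up to a choice of time-direction in the flow). However, there is a genuine gap at exactly the point you flag as ``the main obstacle'' and then declare resolved in one clause.

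The transformed driver $\tilde f$ does \emph{not} satisfy (F2) on $[0,T]$. Condition (F2) demands $\partial_y f \le C$ for a constant $C$, but differentiating $\tilde f$ in $\ty$ produces, among other things, the term $\tfrac12\,\tfrac{\partial_{yyy}\phi}{\partial_y\phi}\,|\tz|^2$ (and related $\partial_{yy}\phi$--terms), i.e.\ a quadratic-in-$\tz$ contribution whose coefficient cannot be removed. The best one can say, and this is precisely Lemma~\ref{lemBSDEPropertiesOfFTilde}(iii), is $\partial_{\ty}\tilde f \le \tCunif + \varepsilon|\tz|^2$ on a short interval $[T-h_\varepsilon,T]$, with $h_\varepsilon$ shrinking as $\varepsilon$ does; this uses that $\partial_{yy}\phi, \partial_{yyy}\phi, \ldots$ vanish at $t=T$ (Lemma~\ref{lemFlowEstimates}). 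Kobylanski's comparison/uniqueness theorem requires exactly such a structural bound with a sufficiently small coefficient on $|\tz|^2$ (the universal $\delta(C,M)$ in Theorem~\ref{thmComparison}), and therefore applies only on $[T-h,T]$, not on all of $[0,T]$. Because of this, one cannot invoke Kobylanski's Theorem~2.6/2.8 once and for all as you do; what the paper actually does is prove convergence and uniqueness on $[T-h,T]$, then \emph{restart the flow at $T-h$} (so that the flow derivatives are again small near the new terminal time), prove convergence and uniqueness on $[T-2h,T-h]$, and iterate down to $0$, finally patching the pieces together. The crucial point making this work is that $h$ depends only on $\|\mathbf{\rp}\|_{\pvar}$, $\|\xi\|_\infty$ and the structural constants, not on the subinterval, so a fixed $h$ suffices for the whole iteration. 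This localization-and-restart scheme is the heart of the argument and is missing from your proposal; without it, the appeal to Kobylanski's stability theorem is unjustified.

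A secondary omission: for the limiting (rough) driver $\tilde f^0$, Kobylanski's Theorem~2.3 gives only \emph{existence} of a bounded solution; uniqueness is not automatic and must be extracted from the comparison theorem, again interval by interval. Your proof implicitly assumes global uniqueness for the transformed BSDE with driver $\tilde f^{\mathbf{\rp}}$, which is precisely what needs to be argued.
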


The problem in showing convergence of the processes $(Y^n, Z^n)$ in the statement of the theorem
lies in the fact, that
in general the Lipschitz constants for the correspondig BSDEs will tend to infinity as $n \to \infty$.
It does not seem possible then, to directly control the solutions via
a priori bounds, a standard tool in the theory of BSDEs (see e.g. \cite{bibElKarouiPengQuenez}).
We will take another approach
and transform the BSDEs corresponding to the smooth paths $\rp^n$ into BSDEs which are easier to analayze.

We start by defining the flow
\begin{align}
  \label{eqFlow}
  \phi(t,x,y) = y + \int_t^T \sum_{k=1}^d H_k(x, \phi(r,x,y)) d \rp^k(r).
\end{align}
Let $\phi^{-1}$ be the $y$-inverse of $\phi$, then
\begin{align*}
  \phi^{-1}(t,x,y) = y - \int_t^T \sum_{k=1}^d \partial_y \phi^{-1}(r,x,y) H_k(x,y) d \rp^k(r).
\end{align*}

We have the following
\begin{lemma}
  \label{lemBSDETransformation}
  Assume (A1), (A2), (F1), (F2) and let $H$ be Lipschitz on $\R^n \times \R$.
  Let $\xi \in L^{\infty}(\F_T)$ and a smooth path $\rp$ be given and let $\phi$ be the corresponding flow defined in \eqref{eqFlow}.
  Let $(Y,Z)$ be the unique solution to the BSDE with data $(\xi, f, H, \rp)$.

  The, the process $(\tY, \tZ)$ defined as
  \begin{align*}
    \tY_t &:= \phi^{-1}(t, X_t, Y_t), \quad
    \tZ_t :=
    -
    \frac{ \partial_x \phi( t, X_t, \tY_t ) }{ \partial_y \phi(t,X_t,\tY_t) } \sigma_t
    +
    \frac{1}{ \partial_y \phi(t, X_t, \tY_t ) } Z_t,
  \end{align*}
  satisfies the BSDE
  \begin{align}
    \label{eqBSDETransformedSmooth}
    \tY_t
    =
    \xi
    +
    \int_t^T \tilde{f}(r,X_r,\tY_r,\tZ_r) dr
    -
    \int_t^T \tZ_r dW_r,
  \end{align}
  where (throughout, $\phi$ and all its derivatives will always be evaluated at $(t,x,\ty)$)
  \begin{align*}
    \tilde{f}(t,x,\ty,\tz) &:=
      \frac{1}{\partial_y \phi}
      \Bigl\{  
        f\left(t, \phi, \partial_y \phi \tz + \partial_x \phi \sigma_t \right)
        +
        \langle \partial_x \phi, \itob_t \rangle
        +
        \frac{1}{2} \Tr\left[ \partial_{xx} \phi \sigma_t \sigma_t^T \right] \\
        &\qquad \qquad
        +
        \langle \tz, \left( \partial_{xy} \phi \sigma_t \right)^T  \rangle
        +
        \frac{1}{2} \partial_{yy} \phi |\tz|^2
        \Bigr\}.
  \end{align*}
\end{lemma}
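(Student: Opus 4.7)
The plan is to apply It\^o's formula to $\tY_t = u(t, X_t, Y_t)$ where $u = \phi^{-1}$ denotes the $y$-inverse of $\phi(t,x,\cdot)$. For smooth $\rp$ and Lipschitz $H$, standard flow theory yields $u \in C^{1,2,2}$, and the key observation will be that the $d\rp$-contributions in $dY$ cancel exactly with those arising from $\partial_t u$.

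First I would derive the inverse chain-rule identities. From $u(t,x,\phi(t,x,y)) = y$, differentiation in $y$ gives $\partial_y u = 1/\partial_y\phi$, differentiation in $x$ gives $\partial_x u = -\partial_x\phi/\partial_y\phi$, and repeated differentiation gives explicit formulas for $\partial_{xx}u$, $\partial_{xy}u$, $\partial_{yy}u$ as rational functions of $\phi$-derivatives (all $\phi$-derivatives at $(t,x,y)$, all $u$-derivatives at $(t,x,\phi)$). Crucially, differentiating \eqref{eqFlow} in $t$ yields $\partial_t \phi(t,x,y) = -\sum_k H_k(x, \phi(t,x,y))\,\dot\rp^k(t)$, and hence at $(t, X_t, Y_t) = (t, X_t, \phi(t, X_t, \tY_t))$,
\begin{equation*}
  \partial_t u(t, X_t, Y_t)\,dt \;=\; \partial_y u(t, X_t, Y_t)\, H(X_t, Y_t)\, d\rp(t).
\end{equation*}

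Next I would apply It\^o's formula to $\tY_t = u(t, X_t, Y_t)$ using $dY_t = -f(t,Y_t,Z_t)\,dt - H(X_t,Y_t)\,d\rp(t) + Z_t\,dW_t$ and $dX_t = \itob_t\,dt + \sigma_t\,dW_t$. The term $\partial_t u\,dt$ contributes $+\partial_y u\, H(X,Y)\,d\rp$, which exactly cancels the $-\partial_y u\, H(X,Y)\,d\rp$ arising from $\partial_y u\, dY$, so the rough driver disappears. Reading off the $dW$-coefficient of $d\tY_t$ gives $\partial_x u\,\sigma_t + \partial_y u\, Z_t = -(\partial_x\phi/\partial_y\phi)\sigma_t + Z_t/\partial_y\phi = \tZ_t$, which confirms the definition of $\tZ$ and yields the inversion $Z_t = \partial_y\phi\,\tZ_t + \partial_x\phi\,\sigma_t$.

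It remains to match the resulting drift to $-\tilde{f}(t, X_t, \tY_t, \tZ_t)$; this is the main obstacle, being pure bookkeeping. I would substitute the inversion formulas for $\partial_{xx} u$, $\partial_{xy} u$, $\partial_{yy} u$ into $\partial_x u\,\itob_t - \partial_y u\, f + \tfrac{1}{2}\Tr(\partial_{xx} u\,\sigma\sigma^T) + \partial_{xy} u\,\sigma Z^T + \tfrac{1}{2}\partial_{yy} u\,|Z|^2$ and expand $Z$ as $\partial_y\phi\,\tZ + \partial_x\phi\,\sigma$. The formulas for $\partial_{xx} u$ and $\partial_{xy} u$ carry extra terms (since $u$-derivatives at $(t,x,\phi)$ pick up implicit $\phi$-dependence), but these combine in pairs: the contributions of the form $\partial_{xy}\phi\,\sigma\sigma^T\partial_x\phi^T/(\partial_y\phi)^2$ cancel against their transposes, and the cubic contributions $\partial_{yy}\phi\,\partial_x\phi\,\sigma\sigma^T\partial_x\phi^T/(\partial_y\phi)^3$ cancel likewise. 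What survives, after factoring out $1/\partial_y\phi$, is precisely the expression for $\tilde{f}$ in the statement, with $f$ evaluated at $(t, \phi, \partial_y\phi\,\tZ + \partial_x\phi\,\sigma)$, completing the identification of the transformed BSDE.
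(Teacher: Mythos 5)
Your proposal is correct and follows essentially the same route as the paper's own proof: both apply It\^o's formula to $\phi^{-1}(t,X_t,Y_t)$, use the fact that $\partial_t\phi=-H(\cdot,\phi)\dot\rp$ to cancel the rough-driver contribution, identify $\tZ$ from the $dW$-coefficient, and then invert the chain-rule identities for $\partial_x\psi,\partial_y\psi,\partial_{xx}\psi,\partial_{xy}\psi,\partial_{yy}\psi$ (with $\psi=\phi^{-1}$) to rewrite the resulting drift in terms of $\phi$-derivatives. You leave the final algebraic bookkeeping as a sketch, whereas the paper carries it out in full; the structure of the argument is the same.
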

\begin{remark}
  This ("Doss-Sussman") transformation is well known and has been recently applied to
  BDSDEs \cite{bibBuckdahnMaI} and rough partial differential equations \cite{bibFrizOberhauser}. We include details for the reader's convenience. 
\end{remark}
\begin{proof}
  Denoting $\psi := \phi^{-1}$ and $\theta_r := (r, X_r, Y_r)$, we have by It\^{o} formula
  \begin{align*}
    \psi(t, X_t, Y_t)
    %%%%%%%%%%%%%%%%%%%%%%%
    %%%%%%%%%%%%%%%%%%%%%%%
    %% STIMMT, ABER PLATZ SPAREN
    %% &=
    %% \psi(0,X_0,Y_0)
    %% +
    %% \int_0^t \partial_t \psi( \theta_r ) dr
    %% +
    %% \int_0^t \langle \partial_x \psi( \theta_r ), dX_r \rangle
    %% +
    %% \int_0^t \partial_y \psi( \theta_r ) dY_r \\
    %% %
    %% &\quad
    %% +
    %% \frac{1}{2} \sum_{i,j=1}^n \int_0^t \partial_{x_i x_j} \psi( \theta_r ) d\langle X_i, X_j \rangle_r
    %% +
    %% \frac{1}{2} \int_0^t \partial_{yy} \psi( \theta_r ) d\langle Y \rangle_r
    %% +
    %% \int_0^t \langle \partial_{xy} \psi( \theta_r ), d\langle X, Y \rangle_r \rangle \\
    %%%%%%%%%%%%%%%%%%%%%%%
    %%%%%%%%%%%%%%%%%%%%%%%
    &=
    \xi
    -
    \int_t^T \sum_{k=1}^d \partial_y \psi( \theta_r ) H_k(X_r, Y_r) \dot{\rp^k}(r) dr
    -
    \int_t^T \langle \partial_x \psi( \theta_r ), \itob_r \rangle dr
    -
    \int_t^T \langle \partial_x \psi( \theta_r ), \sigma_r dW_r \rangle \\
    &\quad
    +
    \int_t^T \partial_y \psi( \theta_r ) f(r,Y_r,Z_r) dr
    +
    \int_t^T \sum_{k=1}^d \partial_y \psi( \theta_r ) H_k(X_r, Y_r) \dot{\rp^k}(r) dr
    -
    \int_t^T \partial_y \psi( \theta_r ) Z_r dW_r \\
    &\quad
    -
    \frac{1}{2} \int_t^T \Tr\left[ \partial_{xx} \psi( \theta_r ) \sigma_r \sigma_r^T \rangle \right] dr
    -
    \frac{1}{2} \int_t^T \partial_{yy} \psi( \theta_r ) |Z_r|^2 dr
    -
    \int_t^T \langle \partial_{xy} \psi( \theta_r ), \sigma_r Z_r^T \rangle dr \\
    %%%%%%%%%%%%%%%%%%%%%%%
    %%%%%%%%%%%%%%%%%%%%%%%
    &=
    \xi
    +
    \int_t^T
      \Bigl[ 
        \partial_y \psi( \theta_r ) f(r,Y_r,Z_r)
        -
        \langle \partial_x \psi( \theta_r ), \itob_r \rangle
        -
        \frac{1}{2} \Tr\left[ \partial_{xx} \psi( \theta_r ) \sigma_r \sigma_r^T \right] \\
    &\qquad \qquad \quad
        -
        \frac{1}{2} \partial_{yy} \psi( \theta_r ) |Z_r|^2
        - \langle \partial_{xy} \psi( \theta_r ), \sigma_r Z_r^T \rangle
      \Bigr] dr \\
    &\quad
    -
    \int_t^T \langle \partial_x \psi( \theta_r ) \sigma_r + \partial_y \psi( \theta_r ) Z_r, dW_r \rangle.
  \end{align*}

  Now, by deriving the identity $\psi(t,x,\phi(t,x,\ty)) = \ty$ we get
  \begin{align*}
    0 &= \partial_x \psi + \partial_y \psi \partial_x \phi, \\
    0 &= \partial_{xx} \psi
         + \partial_{yx} \psi \otimes \partial_x \phi
         + [\partial_{xy} \psi + \partial_{yy} \psi \partial_x \phi] \otimes \partial_x \phi
         + \partial_y \psi \partial_{xx} \phi \\
      &= \partial_{xx} \psi
          + 2 \partial_{xy} \psi \otimes \partial_x \phi
          + \partial_{yy} \psi \partial_x \phi \otimes \partial_x \phi
          + \partial_y \psi \partial_{xx} \phi, \\
    1 &= \partial_y \psi \partial_y \phi, \\
    0 &= \partial_{xy} \psi \partial_y \phi + \partial_{yy} \psi \partial_x \phi \partial_y \phi + \partial_y \psi \partial_{xy} \phi, \\
    0 &= \partial_{yy} \psi (\partial_y \phi)^2 + \partial_y \psi \partial_{yy} \phi.
  \end{align*}
  And hence
  \begin{align*}
    \partial_{yy} \psi
    %&=
    %- \frac{ \partial_y \psi \partial_{yy} \phi }{ (\partial_y \phi)^2 }
    &=
    - \frac{ \partial_{yy} \phi }{ (\partial_y \phi)^3 }, \quad
    \partial_x \psi
    %&= - \partial_y \psi \partial_x \phi
    = - \frac{\partial_x \phi}{\partial_y \phi}, \quad
    \partial_{xy} \psi
    % &= - \frac{1}{\partial_y \phi} \left[ \partial_{yy} \psi \partial_x \phi \partial_y \phi + \partial_y  \psi \partial_{xy} \phi \right]
    % = - \frac{1}{\partial_y \phi}
    %     \left[
    %     - \frac{\partial_{yy} \phi}{(\partial_y \phi)^3} \partial_x \phi \partial_y \phi
    %       +
    %       \frac{\partial_{xy} \phi}{\partial_y \phi}
    %     \right] \\
    =
      \frac{\partial_{yy} \phi}{(\partial_y \phi)^3} \partial_x \phi
      -
      \frac{\partial_{xy} \phi}{ (\partial_y \phi)^2 }, \\
    \partial_{xx} \psi
    &=
    2 \left[ \frac{ \partial_{yy} \phi }{ (\partial_y \phi)^3 } \partial_x \phi - \frac{ \partial_{xy} \phi }{ (\partial_y \phi)^2 } \right] \otimes \partial_x \phi
    +
    \frac{\partial_{xx} \phi}{ (\partial_y \phi)^3 } \partial_x \phi \otimes \partial_x \phi
    -
    \frac{1}{\partial_y \phi} \partial_{xx} \phi.
  \end{align*}

  If we define
  \begin{align*}
    \tY_t &:= \psi( t, X_t, Y_t ) = \phi^{-1}( t, X_t, Y_t ), \\
    %%%%%%%%%%%%%%%%%%%%%%%%%%%%%%%%%%%%
    %%%%%%%%%%%%%%%%%%%%%%%%%%%%%%%%%%%%
    \tZ_t
    &:=
    \partial_x \psi( t, X_t, Y_t ) \sigma_t + \partial_y \psi( t, X_t, Y_t ) Z_t \\
    %&:=
    %\partial_x \psi( t, X_t, \phi(t, X_t, \tY_t) ) \sigma_t + \partial_y \psi( t, X_t, \phi(t, X_t, \tY_t) ) Z_t \\
    &=
    - \frac{ \partial_x \phi( t, X_t, \tY_t ) }{ \partial_y \phi(t,X_t,\tY_t) } \sigma_t
    +
    \frac{ 1 }{ \partial_y \phi( t, X_t, \tY_t ) } Z_t,
    %%%%%%%%%%%%%%%%%%%%%%%%%%%%%%%%%%%%
    %%%%%%%%%%%%%%%%%%%%%%%%%%%%%%%%%%%%
  \end{align*}
  and ($\psi$ and its derivatives are always evaluated at $(t, x, \phi(t,x,\ty))$, $\phi$ and its derivatives are evaluated at $(t,x,\ty)$)
  \begin{align*}
    \tilde{f}(t,x,\ty,\tz)
    %%%%%%%%%%%%%%%%%%%%%%%%%%%%%%%%%%%%
    %%%%%%%%%%%%%%%%%%%%%%%%%%%%%%%%%%%%
    &:= 
    \partial_y \psi f\left(t, \phi, \partial_y \phi ( \tz + \frac{\partial_x \phi \sigma_t}{\partial_y \phi} ) \right)
    -
    \langle \partial_x \psi, \itob_t \rangle
    -
    \frac{1}{2} \Tr\left[ \partial_{xx} \psi \sigma_t \sigma_t^T \right] \\
    &\quad
    -
    \frac{1}{2} \partial_{yy} \psi | \frac{ \tz - \partial_x \psi \sigma_t }{ \partial_y \psi } |^2
    -
    \langle \partial_{xy} \psi, \sigma_t \left( \frac{ \tz - \partial_x \psi \sigma_t }{ \partial_y \psi } \right)^T  \rangle \\
    &=
    \frac{1}{\partial_y \phi}
    \Bigl\{  
      f\left(t, \phi, \partial_y \phi \tz + \partial_x \phi \sigma_t \right)
      +
      \langle \partial_x \phi, \itob_t \rangle
      +
      \frac{1}{2} \Tr\left[ \partial_{xx} \phi \sigma_t \sigma_t^T \right] \\
      &\qquad \qquad
      +
      \langle \tz, \left( \partial_{xy} \phi \sigma_t \right)^T  \rangle
      +
      \frac{1}{2} \partial_{yy} \phi |\tz|^2
      \Bigr\},
  \end{align*}
  we therefore obtain
  \begin{align*}
    \tY_t = \xi + \int_t^T \tilde{f}(r, x, \tY_r, \tZ_r) dr - \int_t^T \tZ_r dW_r.
  \end{align*}
\end{proof}

\begin{definition}
  We call equation \eqref{eqBSDETransformedSmooth}
  \textit{BSDE with data $( \xi, \tilde{f}, 0, 0 )$}.
\end{definition}

The BSDE \eqref{eqBSDEUntransformedSmooth} only makes sense for a smooth path $\rp$.
On the other hand, equation \eqref{eqFlow} yields a flow of diffeomorphisms
for a general geometric rough path $\mathbf{\rp} \in C^{\pvar}( [0,T], G^{[p]}(\R^d) ), p\ge 1$.
Hence we can, also in this case, consider the function $\tilde{f}$ from the previous lemma.
We now record important properties for this induced function.

\begin{lemma}
  \label{lemBSDEPropertiesOfFTilde}
  Let $p\ge 1$, $\mathbf{\rp} \in C^{\pvar}( [0,T], G^{[p]}(\R^d) )$ and $\gamma > p$.
  Assume (A1), (A2), (F1), (F2) and ($H_{p,\gamma}$).
  Let $\phi$ be the flow corresponding to equation \eqref{eqFlow} (now solved as a rough differential equation).
  Then the function
  \begin{align}
    \label{eqInducedFTilde}
    \tilde{f}(t,x,\ty,\tz) &:=
      \frac{1}{\partial_y \phi}
      \Bigl\{  
        f\left(t, \phi, \partial_y \phi \tz + \partial_x \phi \sigma_t \right)
        +
        \langle \partial_x \phi, \itob_t \rangle
        +
        \frac{1}{2} \Tr\left[ \partial_{xx} \phi \sigma_t \sigma_t^T \right] \\
        &\qquad \qquad
        +
        \langle \tz, \left( \partial_{xy} \phi \sigma_t \right)^T  \rangle
        +
        \frac{1}{2} \partial_{yy} \phi |\tz|^2
        \Bigr\} \notag
  \end{align}
  satisfies the following properties:
  \begin{itemize}
    \item There exists a constant $\tilde{C}_{1,f} > 0$ depending only on
      $C_\sigma$, $C_\itob$,
      $C_{1,f}$,
      $C_H$ and
      $||\mathbf{\rp}||_{\pvar;[0,T]}$
      such that
      \begin{align*}
        |\tilde{f}(t,x,\ty,\tz)| &\le \tilde{C}_{1,f} + \tilde{C}_{1,f} |\tz|^2, \\
        |\partial_\tz \tilde{f}(t,x,\ty,\tz)| &\le \tilde{C}_{1,f} + \tilde{C}_{1,f} |\tz|. 
      \end{align*}
    \item
      There exists a constant $\tCunif > 0$ that only depends on
      $C_\sigma$, $C_\itob$,
      $C_{2,f}$,
      $C_H$ and
      $||\mathbf{\rp}||_{\pvar;[0,T]}$
      such that
      for every $\varepsilon$ there exists an $h_\varepsilon>0$
      that only depends on
      $C_\sigma$, $C_\itob$,
      $C_H$ and
      $||\mathbf{\rp}||_{\pvar;[0,T]}$
      such that on $[T-h_\varepsilon, T]$ we have
      \begin{align*}
        \partial_\ty \tilde{f}(t,x,\ty,\tz) \le \tCunif + \varepsilon |\tz|^2.
      \end{align*}
  \end{itemize}
\end{lemma}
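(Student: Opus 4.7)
The plan is to derive both bullets from uniform control of the rough flow $\phi$ and its spatial derivatives, combined with the terminal identity $\phi(T,x,y)=y$, which forces $\partial_x\phi$, $\partial_{xx}\phi$, $\partial_{xy}\phi$, $\partial_{yy}\phi$ (and all third-order derivatives involving $x$) to vanish identically at $t=T$, while $\partial_y\phi(T,\cdot)\equiv 1$.

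First I would invoke standard rough-flow estimates (of the sort developed in Friz--Victoir, Chapter~11): under $(H_{p,\gamma})$ with $\gamma>p$, the solution $\phi$ of \eqref{eqFlow}, read as a rough differential equation, is a $C^{\gamma}$-diffeomorphism in $(x,y)$, and its spatial derivatives up to order three are uniformly bounded on $[0,T]\times\R^{n+1}$ by a constant depending only on $C_H$ and $\|\mathbf{\rp}\|_{\pvar;[0,T]}$; moreover $\partial_y\phi$ is uniformly bounded below by a positive constant of the same form. This makes $1/\partial_y\phi$ a bounded factor and justifies the chain/quotient rule applied to \eqref{eqInducedFTilde}.

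The first bullet then reduces to plugging these flow bounds into the formula. By (F1) applied at the argument $\partial_y\phi\,\tz+\partial_x\phi\,\sigma_t$, the $f$-summand is dominated by $C_{1,f}(1+|\partial_y\phi\,\tz+\partial_x\phi\,\sigma_t|^2)\le C(1+|\tz|^2)$; the $\tfrac12\partial_{yy}\phi\,|\tz|^2$ term is manifestly $O(|\tz|^2)$; the remaining summands are $O(1)+O(|\tz|)$. Dividing by $\partial_y\phi$ yields the required quadratic bound on $|\tilde f|$. Differentiating \eqref{eqInducedFTilde} in $\tz$, the only term that is not obviously bounded or linear in $\tz$ is the $f$-summand, whose derivative equals $\partial_z f(\ldots)$ once the $\partial_y\phi$-factors cancel against $1/\partial_y\phi$; by (F1) this is controlled by $C(1+|\tz|)$, and the remaining contributions are constants and linear-in-$\tz$ terms with bounded coefficients.

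The main obstacle is the second bullet, the $\varepsilon$-quadratic monotonicity bound on $\partial_{\ty}\tilde f$. A chain-rule computation writes it in the form $A(t,x,\ty)+B(t,x,\ty)|\tz|+C(t,x,\ty)|\tz|^2$; the coefficients $B$ and $C$ are built entirely from second- and third-order spatial derivatives of $\phi$ (such as $\partial_{xy}\phi,\partial_{yy}\phi,\partial_{xyy}\phi,\partial_{yyy}\phi$), each of which vanishes identically at $t=T$, whereas $A$ contains the benign $\partial_y f$-contribution (controlled by $C_{2,f}$ via (F2)) together with further terms carrying derivatives of $\phi$ that also vanish at $T$. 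By continuity in $t$ of the rough flow and its derivatives, with modulus governed by $\|\mathbf{\rp}\|_{\pvar;[t,T]}$, one has $|B(t,\cdot)|+|C(t,\cdot)|\to 0$ as $t\uparrow T$, uniformly in $(x,\ty)$, at a rate depending only on $C_\sigma$, $C_\itob$, $C_H$ and $\|\mathbf{\rp}\|_{\pvar;[0,T]}$. For prescribed $\varepsilon>0$ we then choose $h_\varepsilon$ so that $|B|,|C|\le \varepsilon/2$ on $[T-h_\varepsilon,T]$; the crude inequality $|B||\tz|\le |B|(1+|\tz|^2)$ finally gives $\partial_{\ty}\tilde f\le \tCunif+\varepsilon|\tz|^2$, with $\tCunif$ an $\varepsilon$-independent bound on $\sup A+1$.
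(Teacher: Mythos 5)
Your proposal matches the paper's argument: both bullets are proved by combining the uniform flow estimates (boundedness of $\phi$ and its spatial derivatives up to order three, a positive lower bound on $\partial_y\phi$, and the fact that all derivatives other than $\partial_y\phi$ vanish at $t=T$ and stay uniformly small on a short terminal interval) with the growth assumptions (A1)--(A2), (F1)--(F2), exactly as the paper does via Lemma~\ref{lemFlowEstimates}; and for the second bullet the decomposition $\partial_{\ty}\tilde f = A + B|\tz| + C|\tz|^2$ with $B,C$ built from $\partial_{yy}\phi$, $\partial_{xy}\phi$, $\partial_{xyy}\phi$, $\partial_{yyy}\phi$ (which tend to zero uniformly as $t\uparrow T$) is precisely the structure the paper exploits. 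The only cosmetic difference is that you phrase the smallness as a continuity-in-$t$ statement for the flow derivatives, whereas the paper cites the second half of Lemma~\ref{lemFlowEstimates} directly; the crude absorption $|B||\tz|\le|B|(1+|\tz|^2)$ is also used implicitly there.
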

\begin{proof}
  (i). Note that
  \begin{align*}
    |\tilde{f}(t,x,\ty,\tz)|
   %%%%%%%%%%%%%%%%%%%%%%%%%%
   %%%%%%%%%%%%%%%%%%%%%%%%%%
    &\le
    |\frac{1}{\partial_y \phi}|
    \Bigl(  
      |f\left(t, \phi, \partial_y \phi \tz + \partial_x \phi \sigma_t \right)|
      +
      |\langle \partial_x \phi, \itob_t \rangle|
      +
      |\frac{1}{2} \Tr\left[ \partial_{xx} \phi \sigma_t \sigma_t^T \right]| \\
      &\qquad \qquad
      +
      |\langle \tz, \left( \partial_{xy} \phi \sigma_t \right)^T  \rangle|
      +
      |\frac{1}{2} \partial_{yy} \phi| |\tz|^2
    \Bigr) \\
    %%%%%%%%%%%%%%%%%%%%%%%%%%
    %%%%%%%%%%%%%%%%%%%%%%%%%%
    &\le
    |\frac{1}{\partial_y \phi}|
    \Bigl(  
    C_{1,f} + C_{1,f} |\partial_y \phi \tz + \partial_x \phi \sigma_t|^2
      +
      |\partial_x \phi| |\itob_t|
      +
      \frac{1}{2}
      |\partial_{xx} \phi| |\sigma_t \sigma_t^T| \\
      &\qquad \qquad
      +
      |\tz| |\partial_{xy} \phi \sigma_t|
      +
      \frac{1}{2} |\partial_{yy} \phi| |\tz|^2
    \Bigr) \\
    %%%%%%%%%%%%%%%%%%%%%%%%%%
    %%%%%%%%%%%%%%%%%%%%%%%%%%
    &\le
    |\frac{1}{\partial_y \phi}|
    \Bigl(  
    C_{1,f} + C_{1,f} 2 ( |\partial_y \phi|^2 |\tz|  + |\partial_x \phi| |\sigma_t^T| )
      +
      |\partial_x \phi| |\itob_t|
      +
      \frac{1}{2}
      |\partial_{xx} \phi| |\sigma_t|^2 \\
      &\qquad \qquad
      +
      |\tz| |\partial_{xy} \phi| |\sigma_t^T|
      +
      \frac{1}{2} |\partial_{yy} \phi| |\tz|^2
    \Bigr) \\
    %%%%%%%%%%%%%%%%%%%%%%%%%%
    %%%%%%%%%%%%%%%%%%%%%%%%%%
    &\le
    \tilde{C}_{1,f} + \tilde{C}_{1,f} |\tz|^2.
  \end{align*}
  Here we have used (A1), (A2) and (F1).
  For the boundedness of the flow and its derivatives we have used Lemma \ref{lemFlowEstimates}.
  Note that $\tilde{C}_{1,f}$ hence only depends on
  $C_\sigma$,
  $C_\itob$,
  $C_{1,f}$,
  $C_H$ and
  $||\rp||_{\pvar;[0,T]}$.

(ii). Note that
\begin{align*}
  |\partial_\tz \tilde{f}(t,x,\ty,\tz)|
  %%%%%%%%%%%%%%%%%%%%%%%%%%
  %%%%%%%%%%%%%%%%%%%%%%%%%%
  &=
  |
    \partial_z f\left(t, \phi, \partial_y \phi \tz + \partial_x \phi \sigma_t \right)
    +
    \frac{1}{\partial_y \phi}
    \Bigl(  
      \partial_{xy} \phi \sigma_t
      +
      \partial_{yy} \phi \tz
   \Bigr)
  | \\
  &\le
  C_{1,f} + C_{1,f} ( |\partial_y \phi| |\tz| + |\partial_x \phi| |\sigma_t| )
  +
  |\frac{ \partial_{xy} \phi }{ \partial_y \phi }| |\sigma_t|
  +
  |\frac{ \partial_{yy} \phi }{ \partial_y \phi }| |\tz| \\
  %%%%%%%%%%%%%%%%%%%%%%%%%%
  %%%%%%%%%%%%%%%%%%%%%%%%%%
  &\le
  \tilde{C}_{1,f} + \tilde{C}_{1,f} |\tz|. 
\end{align*}
Here we have used (A1), (A2) and (F1).
For the boundedness of the flow and its derivatives we have used Lemma \ref{lemFlowEstimates}.
Note that again, $\tilde{C}_{1,f}$ hence only depends on
$C_\sigma$,
$C_\itob$,
$C_{1,f}$,
$C_H$ and
$||\rp||_{\pvar;[0,T]}$.
Without loss of generality we can choose it to be the same constant as in the estimate for (i).

(iii). Note that
\begin{align*}
  \partial_\ty \tilde{f}(t,x,\ty,\tz)
  %%%%%%%%%%%%%%%%%%%%%%%%
  %%%%%%%%%%%%%%%%%%%%%%%%
  &=
  - \frac{\partial_{yy} \phi}{ (\partial_y \phi)^2 }
  \Bigl\{  
    f\left(t, \phi, \partial_y \phi \tz + \partial_x \phi \sigma_t \right)
    +
    \langle \partial_x \phi, \itob_t \rangle
    +
    \frac{1}{2} \Tr\left[ \partial_{xx} \phi \sigma_t \sigma_t^T \right] \\
    &\qquad \qquad
    +
    \langle \tz, \left( \partial_{xy} \phi \sigma_t \right)^T  \rangle
    +
    \frac{1}{2} \partial_{yy} \phi |\tz|^2
    \Bigr\} \\
  &\quad
  +
  \frac{1}{ \partial_y \phi }
  \Bigl\{  
    \partial_y \phi \partial_y f\left(t, \phi, \partial_y \phi \tz + \partial_x \phi \sigma_t \right)
    +
    \langle \partial_{yx} \phi, \itob_t \rangle
    +
    \frac{1}{2} \Tr\left[ \partial_{yxx} \phi \sigma_t \sigma_t^T \right] \\
    &\qquad \qquad
    +
    \langle \tz, \left( \partial_{yxy} \phi \sigma_t \right)^T  \rangle
    +
    \frac{1}{2} \partial_{yyy} \phi |\tz|^2
  \Bigr\}.
  %%%%%%%%%%%%%%%%%%%%%%%%
  %%%%%%%%%%%%%%%%%%%%%%%%
  \end{align*}
  Hence using our assumptions on $f$ we get
  \begin{align*}
  \partial_\ty \tilde{f}(t,x,\ty,\tz)
  &\le
  |\frac{\partial_{yy} \phi}{ (\partial_y \phi)^2 }|
  \Bigl\{  
  C_{2,f} + C_{2,f} |\partial_y \phi \tz + \partial_x \phi \sigma_t|^2
    +
    |\partial_x \phi| |\itob_t|
    +
    \frac{1}{2} |\partial_{xx} \phi| |\sigma_t|^2 \\
    &\qquad \qquad
    +
    |\tz| |\partial_{xy} \phi| |\sigma_t|
    +
    \frac{1}{2} |\partial_{yy} \phi| |\tz|^2
  \Bigr\} \\
  &\quad
  +
  \partial_y f\left(t, \phi, \partial_y \phi \tz + \partial_x \phi \sigma_t \right)
  +
  \frac{1}{ \partial_y \phi }
  \Bigl\{  
    |\partial_{yx} \phi| |\itob_t|
    +
    \frac{1}{2} |\partial_{yxx} \phi| |\sigma_t| \\
    &\qquad \qquad
    +
    (1 + |\tz|^2 ) |\partial_{yxy} \phi| |\sigma_t|_{op}
    +
    \frac{1}{2} \partial_{yyy} \phi |\tz|^2
  \Bigr\} \\
  %%%%%%%%%%%%%%%%%%%%%%%%
  %%%%%%%%%%%%%%%%%%%%%%%%
  &\le
  |\frac{\partial_{yy} \phi}{ (\partial_y \phi)^2 }|
  \Bigl\{  
  C_{2,f} + C_{2,f} 2 |\partial_x \phi|^2 |\sigma_t|^2
    +
    |\partial_x \phi| |\itob_t|
    +
    \frac{1}{2} |\partial_{xx} \phi| |\sigma_t|^2
    +
    |\partial_{xy} \phi| |\sigma_t|
  \Bigr\} \\
  &\quad
  +
  \partial_y f\left(t, \phi, \partial_y \phi \tz + \partial_x \phi \sigma_t \right) \\
  &\quad
  +
  \frac{1}{ \partial_y \phi }
  \Bigl\{  
    |\partial_{yx} \phi| |\itob_t|
    +
    \frac{1}{2} |\partial_{yxx} \phi| |\sigma_t|
    +
    |\partial_{yxy} \phi| |\sigma_t|
  \Bigr\} \\
  &\quad
  +
  \Bigl\{
    |\frac{\partial_{yy} \phi}{ (\partial_y \phi)^2 }|
    C_{2,f} 2 |\partial_y \phi|^2
    +
    |\frac{\partial_{yy} \phi}{ (\partial_y \phi)^2 }|
    |\partial_{xy} \phi| |\sigma_t|
    +
    |\frac{\partial_{yy} \phi}{ (\partial_y \phi)^2 }|
    \frac{1}{2} |\partial_{yy} \phi| \\
    &\qquad \qquad
    +
    \frac{1}{\partial_y \phi} |\partial_{yxy} \phi| |\sigma_t^T|
    +
    \frac{1}{\partial_y \phi} \frac{1}{2} \partial_{yyy} \phi 
  \Bigr\}
  |\tz|^2 \\
  %%%%%%%%%%%%%%%%%%%%%%%%
  %%%%%%%%%%%%%%%%%%%%%%%%
  &\le
  \tCunif
  +
  \Bigl\{
    |\frac{\partial_{yy} \phi}{ (\partial_y \phi)^2 }|
    C_{2,f} 2 |\partial_y \phi|^2
    +
    |\frac{\partial_{yy} \phi}{ (\partial_y \phi)^2 }|
    |\partial_{xy} \phi| |\sigma_t|
    +
    |\frac{\partial_{yy} \phi}{ (\partial_y \phi)^2 }|
    \frac{1}{2} |\partial_{yy} \phi| \\
    &\qquad \qquad
    +
    \frac{1}{\partial_y \phi} |\partial_{yxy} \phi| |\sigma_t^T|
    +
    \frac{1}{\partial_y \phi} \frac{1}{2} \partial_{yyy} \phi 
  \Bigr\}
  |\tz|^2,
\end{align*}
where $\tCunif$ only depends on
$C_\sigma$,
$C_\itob$,
$C_H$ and
$||\mathbf{\rp}||_{\pvar;[0,T]}$
(here we have used Lemma \ref{lemFlowEstimates} to bound the flow and its derivatives).

By (A1), (A2) $\sigma$ and $\itob$ are bounded.
Then, by the properties of the flow, the term in front of $|\tz|^2$ goes uniformly to zero as $t$ approaches $T$.
To be specific: using ($H_{p,\gamma}$) we obtain, again by Lemma \ref{lemFlowEstimates},
that for every $\varepsilon > 0$ there exists an $h_\varepsilon>0$, depending on
$C_\sigma$,
$C_\itob$,
$C_H$ and
$||\mathbf{\rp}||_{\pvar;[0,T]}$
such that on $[T-h_\varepsilon,T]$ we have
\begin{align*}
  \partial_\ty \tilde{f}(t,x,\ty,\tz)
  &\le
  \tCunif + \varepsilon |\tz|^2.
\end{align*}
\end{proof}

We are now ready to prove Theorem \ref{thmBSDEConvergence}.
\begin{proof}[Proof of Theorem \ref{thmBSDEConvergence}] 
For the sake of unified notation, the (rough BSDE) solution process $(Y,Z)$ will be written as $(Y^0,Z^0)$ in what follows; similarly, the involved rough path $\mathbf{\rp}$ will be written as  $\mathbf{\rp}^0$.

  1. \textbf{Existence}
  For $n=0,1,\dots$ denote by $\phi^n$ the flow of the ODE
  \begin{align*}
    \phi^n(t, x, y) = y + \int_t^T H( x, \phi^n(r,x,y) ) d\rp^n(r).
  \end{align*}
  (For $n=0$ we mean the rough differential equation driven by $\mathbf{\rp}^0$ ).

  By Lemma \ref{lemFlowEstimates}, we have
  for all $n\ge0$, $x \in \R^n$, that $\phi^n(t,x,\cdot)$ is a flow of $C^3$-diffeomorphisms.
  Let $\psi^n(t,x,\cdot)$ be its $y$-inverse.
  We have that $\phi^n(t,\cdot,\cdot)$ and its derivatives up to order three are bounded (Lemma \ref{lemFlowEstimates}).
  The same holds true for $\psi^n(t,\cdot,\cdot)$ and its derivatives up to order three.

  Moreover, by Lemma \ref{lemConvergenceOfFlows} we have that
  locally uniformly on $[0,T] \times \R^n \times \R$
  \begin{align}
    \label{eqConvergenceOfFlows}
    (\phi^n, \frac{1}{\partial_y \phi^n}, \partial_y \phi^n, \partial_{yy} \phi^n, \partial_x \phi^n, \partial_{xx} \phi^n, \partial_{yx} \phi^n)
    \to 
    (\phi^0, \frac{1}{\partial_y \phi^0}, \partial_y \phi^0, \partial_{yy} \phi^0, \partial_x \phi^0, \partial_{xx} \phi^0, \partial_{yx} \phi^0).
  \end{align}

  Denote for $n\ge 0$ the function
  \begin{align*}
    \tilde{f}^n(r,x,\ty,\tz) &:=
      \frac{1}{\partial_y \phi^n}
      \Bigl\{  
        f\left(t, \phi^n, \partial_y \phi^n \tz + \partial_x \phi^n \sigma_t \right)
        +
        \langle \partial_x \phi^n, \itob_t \rangle
        +
        \frac{1}{2} \Tr\left[ \partial_{xx} \phi^n \sigma_t \sigma_t^T \right] \\
        &\qquad \qquad
        +
        \langle \tz, \left( \partial_{xy} \phi^n \sigma_t \right)^T  \rangle
        +
        \frac{1}{2} \partial_{yy} \phi^n |\tz|^2
      \Bigr\}.
  \end{align*}

  Now, we have seen above that for $n\ge1$, the process
  \begin{align*}
    (\tY^n,\tZ^n)
    &:=
    L^n(Y^n, Z^n) \\
    &:=
    (
      (\phi^{n})^{-1}(\cdot, X_\cdot, Y^n_\cdot),
      - \frac{ \partial_x \phi^{n}( \cdot, X_{\cdot}, (\phi^{n})^{-1}(\cdot, X_\cdot, Y^n_\cdot) ) }{ \partial_y \phi^{n}(\cdot,X_\cdot,(\phi^{n})^{-1}(\cdot, X_\cdot, Y^n_\cdot)) } \sigma_\cdot
      +
      \frac{1}{ \partial_y \phi^{n}(\cdot, X_\cdot, (\phi^{n})^{-1}(\cdot, X_\cdot, Y^n_\cdot) ) } Z^n_\cdot
    ).
  \end{align*}
  solves the BSDE with data $(\xi, \tilde{f}^n, 0, 0)$.

  Note that although $(\xi, \tilde{f}^n, 0, 0)$ is a quadratic BSDE,
  existence and uniqueness of a solution are guaranteed for $n\ge1$
  by the fact that the mapping $L^n$ is one to one and by the existence
  of a unique solution to the untransformed BSDE (Theorem 2.3 and Theorem 2.6 in \cite{bibKobylanski}).

  For $n=0$, 
  by the properties of $\tilde{f}^0$
  demonstrated in Lemma \ref{lemBSDEPropertiesOfFTilde},
  there exists a solution $(\tY^0, \tZ^0) \in H^{\infty}_{[0,T]} \times H^2_{[0,T]}$
  to the BSDE with data $(\xi, \tilde{f}^0, 0, 0)$ by Theorem 2.3 in \cite{bibKobylanski}.
  Note that it is a priori not unique, but we will show that it is at least unique on a small time interval up to $T$.

  We now construct the process $(Y^0, Z^0)$ of the statement on subintervals of $[0,T]$.
  First of all notice that we can choose the constant $\tilde{C}_{1,f}$ appearing in Lemma \ref{lemBSDEPropertiesOfFTilde}
  uniformly for all $n\ge 0$.
  Let $M := ||\xi||_{\infty} + T \tilde{C}_{1,f}$.
  By Corollary 2.2 in \cite{bibKobylanski} we have
  \begin{align}
    \label{eqBoundOnYn}
    ||\tY^n||_{\infty} \le M, \quad n\ge 0.
  \end{align}

  Now by Lemma \ref{lemBSDEPropertiesOfFTilde}
  \begin{itemize}
    \item there exists $\tilde{C}_{1,f} > 0$ that only depends on
      $C_\sigma$, $C_\itob$,
      $C_{1,f}$,
      $C_H$ and
      $||\mathbf{\rp}||_{\pvar;[0,T]}$ such that
          \begin{align*}
            |\tilde{f}^0(t,x,y,z)| &\le \tilde{C}_{1,f} + \tilde{C}_{1,f} |z|^2, \\
            |\partial_z \tilde{f}^0(t,x,y,z)| &\le \tilde{C}_{1,f} + \tilde{C}_{1,f} |z|. 
          \end{align*}
    \item
      There exists a constant $\tCunif > 0$ that only depends on
      $C_\sigma$, $C_\itob$,
      $C_{2,f}$,
      $C_H$ and
      $||\mathbf{\rp}||_{\pvar;[0,T]}$
      such that
      for every $\varepsilon$ there exists an $h_\varepsilon>0$
      that only depends on
      $C_\sigma$, $C_\itob$,
      $C_H$ and
      $||\mathbf{\rp}||_{\pvar;[0,T]}$
      such that on $[T-h_\varepsilon, T]$ we have
      \begin{align*}
        \partial_y \tilde{f}^0(t,x,y,z) \le \tCunif + \varepsilon |z|^2.
      \end{align*}
  \end{itemize}

  Hence we can choose $h = h_{\delta(\tilde{C}_{1,f}, M)}$, such that for $t\in[T-h,T]$ we have
  \begin{align*}
    \partial_y \tilde{f}(t,x,y,z) \le \tCunif + \delta(\tilde{C}_{1,f},M) |z|^2.
  \end{align*}
  Here $\delta$ is the universal function given in the statement of Theorem \ref{thmComparison}.
  We can then apply Theorem \ref{thmComparison} to get uniqueness of our solution $(\tY^0, \tZ^0)$ on $[T-h,T]$.
  Now, as a consequence of \eqref{eqConvergenceOfFlows} we have
  \begin{align*}
    \tilde{f}^n \to \tilde{f}^0 \qquad \text{ uniformly on compacta}.
  \end{align*}

  Hence, by the argument of Theorem 2.8 in \cite{bibKobylanski} we have that on $[T-h, T]$
  \begin{align}
    \label{eqTZNConvergence}
    &\tY^n \to \tY^0 \quad \text{ uniformly on } [T-h,T]\ \P-a.s., \notag \\
    &\tZ^n \to \tZ^0 \quad \text{ in } H^2_{[T-h,T]}.
  \end{align}

  Moreover, if we define
  \begin{align}
    Y^0_t &:= \phi^0(t, X_t, \tY^0_t), \quad t \in [T-h,T], \notag \\
    Z^0_t &:=
    \partial_y \phi^0(t, X_t, \tY^0_t )
    \left[ \tZ^0_t + \frac{ \partial_x \phi^0(t, X_t, \tY^0_t) }{ \partial_y \phi^0(t, X_t, \tY^0_t) } \sigma_t \right], \quad t \in [T-h,T], \notag
 \intertext{and remembering that by construction} \notag
    Y^n_t &= \phi^n(t, X_t, \tY^n_t), \notag \\
    Z^n_t &=
    \partial_y \phi^n(t, X_t, \tY^n_t ) \left[ \tZ^n_t + \frac{ \partial_x \phi^n(t, X_t, \tY^n_t) }{ \partial_y \phi^n(t, X_t, \tY^n_t ) } \sigma_t \right], \notag
 \intertext{and using \eqref{eqConvergenceOfFlows} we get}
    \label{eqYNConvergence}
    Y^n &\to Y^0 \quad \text{ uniformly on } [T-h,T]\ \P-a.s., \\
    %\label{eqZNConvergence}
    Z^n &\to Z^0 \quad \text{ in } H^2_{[T-h,T]}. \notag
  \end{align}

  %%%%%%%%%%
  %% RAUS UM PLATZ ZU SPAREN
  %% For the latter convergence notice first that
  %% \begin{align*}
  %%   &\E\left[ \int_{T-h}^T |\partial_y \phi^n(r, X_r, \tY^n_r ) \tZ^n_r - \partial_y \phi^0(r, X_r, \tY^0_r ) \tZ^n_r|^2 dr \right] \\
  %%   &\le
  %%   \E\left[ \int_{T-h}^T |\partial_y \phi^n(r, X_r, \tY^n_r )|^2 |\tZ^n_r - \tZ^n_r|^2 dr \right]
  %%   +
  %%   \E\left[ \int_{T-h}^T |\partial_y \phi^n(r, X_r, \tY^n_r ) - \partial_y \phi^0(r, X_r, \tY^0_r )|^2 |\tZ^n_r|^2 dr \right] \\
  %%   &\le
  %%   K^2 
  %%   \E\left[ \int_{T-h}^T |\tZ^n_r - \tZ^n_r|^2 dr \right]
  %%   +
  %%   \E\left[ \sup_{T-h\le r\le T} |\partial_y \phi^n(r, X_r, \tY^n_r ) - \partial_y \phi^0(r, X_r, \tY^0_r )|^2 \int_{T-h}^T |\tZ^n_r|^2 dr \right].
  %% \end{align*}
  %% The fist term converges to zero because of \eqref{eqTZNConvergence}.
  %% The last term converges to zero, since the term under the expectation converges to zero $\P$-almost surely and is uniformly integrable.
  %% On the other hand
  %% \begin{align*}
  %%   &\E\left[ \int_{T-h}^T |\partial_x \phi^n(r, X_r, \tY^n_r) \sigma_r - \partial_x \phi^0(r, X_r, \tY^0_r) \sigma_r|^2 dr \right] \\
  %%   &\le
  %%   h C_\sigma^2 \E\left[ \int_{T-h}^T |\partial_x \phi^n(r, X_r, \tY^n_r) - \partial_x \phi^0(r, X_r, \tY^0_r)|^2 dr \right],
  %% \end{align*}
  %% which converges to zero, since the term under the expectation converges to zero $\P$-almost surely and is uniformly integrable.
  %%%%%%%%%%

  Let us proceed to the next subinterval.
  To make the rough path disappear in the BSDE, we will use a similar transformation via a flow as above.
  As before we need to control the resulting driver of the transformed BSDE, as well its derivatives. For this reason we have to start the flow anew.
  First, we rewrite the BSDEs for $n\ge1$ as
  \begin{align*}
    Y^n_t = Y^n_{T-h} + \int_t^T f(r,Y^n_r,Z^n_r) dr - \int_t^{T-h} H(X_r, Y^n_r) d\rp^n_r - \int_t^{T-h} Z^n_r dW_r.
  \end{align*}
  Then define the flow $\phi^{n,T-h}$ started at time $T-h$, i.e.
  \begin{align*}
    \phi^{n,T-h}(t, x, y) = y + \int_t^{T-h} H( x, \phi^{n,T-h}(r,x,y) ) d\rp^n(r), \quad t \le T-h.
  \end{align*}

  On $[0,T-h]$ define
  \begin{align*}
    (\tY^{n,T-h}_\cdot,\tZ^{n,T-h}_\cdot)
    %&:=
    %L^{n,T-h}(Y^n, Z^n) \\
    &:=
    (
      (\phi^{n,T-h})^{-1}(\cdot, X_\cdot, Y^n_\cdot), \\
      &\qquad
      - \frac{ \partial_x \phi^{n,T-h}( \cdot, X_{\cdot}, (\phi^{n,T-h})^{-1}(\cdot, X_\cdot, Y^n_\cdot) ) }{ \partial_y \phi^{n,T-h}(\cdot,X_\cdot,(\phi^{n,T-h})^{-1}(\cdot, X_\cdot, Y^n_\cdot)) } \sigma_\cdot
      +
      \frac{1}{ \partial_y \phi^{n,T-h}(\cdot, X_\cdot, (\phi^{n,T-h})^{-1}(\cdot, X_\cdot, Y^n_\cdot) ) } Z^n_\cdot
    ).
  \end{align*}

  Then
  \begin{align*}
    \tY^{n,T-h}_t
    =
    Y^n_{T-h}
    +
    \int_t^{T-h} \tilde{f}^{n,T-h}(r, X_r, \tY^{n,T-h}_r, \tZ^{n,T-h}_r) dr
    -
    \int_t^{T-h} \tZ^{n,T-h}_r dW_r,
  \end{align*}
  where
  \begin{align*}
    \tilde{f}^{n,T-h}(t,x,\ty,\tz) &:=
    \frac{1}{\partial_y \phi^{n,T-h}}
        \Bigl\{  
        f\left(t, \phi^{n,T-h}, \partial_y \phi^{n,T-h} \tz + \partial_x \phi^{n,T-h} \sigma_t \right)
          +
          \langle \partial_x \phi^{n,T-h}, \itob_t \rangle \\
          &\qquad \qquad
          +
          \frac{1}{2} \Tr\left[ \partial_{xx} \phi^{n,T-h} \sigma_t \sigma_t^T \right]
          +
          \langle \tz, \left( \partial_{xy} \phi^{n,T-h} \sigma_t \right)^T  \rangle
          +
          \frac{1}{2} \partial_{yy} \phi^{n,T-h} |\tz|^2
        \Bigr\}.
  \end{align*}

  This BSDE is also defined for $n=0$ and as before we get
  via Lemma \ref{lemBSDEPropertiesOfFTilde} for the same $h$ and the same $\tilde{C}_{1,f}$ and $\tCunif$ as before
  (here the explicit dependence of these constants is crucial), that on $[T - 2h, T-h]$ we have
  \begin{align*}
    \partial_y \tilde{f}^{0,T-h}(t,x,y,z) \le \tCunif + \delta(\tilde{C}_{1,f},M) |z|^2.
  \end{align*}

  Hence we can apply Comparison Theorem \ref{thmComparison} to get uniqueness of our solution $(\tY^{0,T-h}, \tZ^{0,T-h})$ on
  $[T - 2 h,T - h]$.
  Now, also note that for the terminal value we have from \eqref{eqYNConvergence} and \eqref{eqBoundOnYn}
  \begin{align*}
    &Y^n_{T-h} \to Y^0_{T-h} \quad \P-a.s., \\
    &|Y^n_{T-h}| \le M, \quad n\ge 1.
  \end{align*}

  Hence, again by the argument of Theorem 2.8 in \cite{bibKobylanski}
  \footnote{
  Note that Theorem 2.8 in \cite{bibKobylanski} demands convergence in $L^{\infty}$ of the terminal value. A closer look at the proof though, reveals that
  $\P$-a.s. convergence combined with a uniform deterministic bound ($M$ in our case) is enough.
  To be specific: the convergence of the terminal value is only used at two instances for Theorem 2.8
  and this is in the proof of Proposition 2.4 (which is the main ingredient for Theorem 2.8).
  Firstly, it is used on p.~568, right before Step 2 where it reads ``By Lebesgue's dominated \dots''.
  Secondly, it is used on p.~570, before the end of the proof where it reads ``from which we deduce that \dots''.
  In both cases, the above stated requirement is enough.
  }
  \begin{align*}
    &\tY^{n,T-h} \to \tY^{0,T-h} \quad \text{ uniformly on } [T-2h,T-h]\ \P-a.s., \\
    &\tZ^{n,T-h} \to \tZ^{0,T-h} \quad \text{ in } H^2_{[T-2h,T-h]}.
  \end{align*}

  Finally, reversing the transformation, we get as above
  \begin{align*}
    &Y^n \to Y^0 \quad \text{ uniformly on } [T-2h,T-h]\ \P-a.s., \\
    &Z^n \to Z^0 \quad \text{ in } H^2_{[T-2h,T-h]}.
  \end{align*}

  Then, we can iterate this procedure on suberintervals of length $h$ up to time $0$.
  Without loss of generality we can assume that $T = N h$ for an $N \in \N$.
  Then, patching the results together we get
  \begin{align*}
    \sup_{t\le T} |Y^n_t - Y^0_t|
    \le
    \sum_{k=1}^N \sup_{(k-1) h \le t \le k h} |Y^n_t - Y^0_t|
    \to 0 \qquad \P-a.s.
  \end{align*}
  and
  \begin{align*}
    \E\left[ \int_0^T |Z^n_r - Z^0_r|^2 dr \right]
    =
    \sum_{k=1}^N 
    \E\left[ \int_{(k-1)h}^{k h} |Z^n_r - Z^0_r|^2 dr \right]
    \to 0.
  \end{align*}

  2. \textbf{Uniqueness}
  \newcommand{\rpII}{\bar{\zeta}}

  Let $\rpII^n, n\ge 1$ be another sequence of smooth paths that converges to $\mathbf{\rp}$ in $p$-variation.
  Let $(\bar{Y}^n, \bar{Z}^n)$ be the solutions to BSDEs with data $(\xi, f, H, \rpII^n)$.
  Then, as above
  \begin{align*}
    &\tilde{\bar{Y}}^n \to \tilde{Y}^0 \quad \text{ uniformly on } [T-h,T]\ \P-a.s., \notag \\
    &\tilde{\bar{Z}}^n \to \tilde{Z}^0 \quad \text{ in } H^2_{[T-h,T]}.
  \end{align*}

  And hence
  \begin{align*}
    &\bar{Y}^n \to Y^0 \quad \text{ uniformly on } [T-h,T]\ \P-a.s., \notag \\
    &\bar{Z}^n \to Z^0 \quad \text{ in } H^2_{[T-h,T]}.
  \end{align*}

  Note that the choice of $h$ in the proof of existence only depended on properties of the limiting function $\tilde{f}^0$,
  so we can use the same value here.
  One can now iterate this argument up to time $0$ to get
  \begin{align*}
    &\bar{Y}^n \to Y^0 \quad \text{ uniformly on } [0,T]\ \P-a.s., \notag \\
    &\bar{Z}^n \to Z^0 \quad \text{ in } H^2_{[0,T]},
  \end{align*}
  as desired.
  
  3. \textbf{Continuity of the solution map}

  We note that for a given $B>0$, all terminal values $\xi$ such that $|\xi| \le B$
  and all geometric $p$-rough paths with $||\mathbf{\rp}||_{\pvar;[0,T]} \le B$ we can choose an $h = h(B) >0$ such that
  the above constructed unique solution $(Y^0,Z^0)$ to the BSDE
  \eqref{eqBSDEUntransformedRough}
  is given by
  \begin{align*}
    Y^0_t
    &=
    \begin{cases}
    &\phi^{0,T}(t, X_t, \tY^T_t), \qquad t \in [T-h,T],\\
    &\phi^{0,T-h}(t, X_t, \tY^{T-h}_t), \qquad t \in [T-2h,T-h],\\
    &\dots \\
    &\phi^{0,h}(t, X_t, \tY^{h}_t), \qquad t \in [0,h],
    \end{cases} \\
    Z^0_t
    &=
    \begin{cases}
      &\partial_y \phi^{0,T}(t, X_t, \tY^{0,T}_t ) \left[ \tZ^{0,T}_t + \frac{ \partial_x \phi^{0,T}(t, X_t, \tY^{0,T}_t) }{ \partial_y \phi^0(t, X_t, \tY^{0,T}_t) } \sigma_t \right], \quad t \in [T-h,T], \\
      &\partial_y \phi^{0,T-h}(t, X_t, \tY^{0,T-h}_t ) \left[ \tZ^{0,T-h}_t + \frac{ \partial_x \phi^{0,T-h}(t, X_t, \tY^{0,T-h}_t) }{ \partial_y \phi^{0,T-h}(t, X_t, \tY^{0,T-h}_t) } \sigma_t \right], \quad t \in [T-2h,T-h], \\
      &\dots \\
      &\partial_y \phi^{0,h}(t, X_t, \tY^{0,h}_t ) \left[ \tZ^{0,h}_t + \frac{ \partial_x \phi^{0,h}(t, X_t, \tY^{0,h}_t) }{ \partial_y \phi^{0,h}(t, X_t, \tY^{0,h}_t) } \sigma_t \right], \quad t \in [0,h], \\
    \end{cases}
  \end{align*}
  where we used the unique solutions to the following BSDEs
  \begin{align*}
    \tY^{0,T}_t &= \xi + \int_t^T \tilde{f}^{0,T}(r,X_r,\tY^{0,T}_r, \tZ^{0,T}_r) dr - \int_t^T \tZ^{0,T}_r dW_r, \\
    \tY^{0,T-h}_t &= \phi^{0,T}(T-h,X_{T-h}, \tY^{0,T}_{T-h}) + \int_t^{T-h} \tilde{f}^{0,T-h}(r,X_r,\tY^{0,T-h}_r, \tZ^{0,T-h}_r) dr - \int_t^{T-h} \tZ^{0,T-h}_r dW_r, \\
    \dots & \\
    \tY^{0,h}_t &= \phi^{0,2h}(h,X_{h}, \tY^{0,2h}_{h}) + \int_t^{h} \tilde{f}^{0,h}(r,X_r,\tY^{0,h}_r, \tZ^{0,h}_r) dr - \int_t^{h} \tZ^{0,h}_r dW_r.
  \end{align*}

  From this representation and
  stability results on BSDEs (Theorem 2.8 in \cite{bibKobylanski})
  it easily follows that the solution map
  \begin{align*}
    C^{\pvar}([0,T], G^{[p]}(\R^d)) \times L^{\infty}(\F_T) \to H^{\infty}_{[0,T]} \times H^2_{[0,T]}
  \end{align*}
  is continuous in balls of radius $B$.
  Since this is true for every $B>0$ we get the desired result.
\end{proof}

\section{The Markovian Setting - Connection To Rough PDEs}
\label{sectTheMarkovianSetting}

We now specialize to a Markovian model.
We are interested in solving the following forward backward stochastic differential equation for $(t_0,x_0) \in [0,T] \times \R^n$
\begin{align}
  \label{eqFBSDE}
  X^{t_0,x_0}_t
  &=
  x + \int_{t_0}^t \sigma(r,X^{t_0,x_0}_r) dW_r + \int_{t_0}^t b(r, X^{t_0,x_0}_r) dr, \quad t \in [{t_0},T], \notag \\
  Y^{t_0,x_0}_t
  &=
  g(X^{t_0,x_0}_T)
  +
  \int_t^T f(r, X^{t_0,x_0}_r, Y^{t_0,x_0}_r, Z^{t_0,x_0}_r) dr \\
  &\qquad
  +
  \int_t^T H(X^{t_0,x_0}_r, Y^{t_0,x_0}_r) d\mathbf{\rp}_r
  -
  \int_t^T Z^{t_0,x_0}_r dW_r, \quad t \in [{t_0},T]. \notag
\end{align}
Here
$\sigma: [0,T] \times \R^n \to \R^{n\times m}, b: [0,T] \times \R^n \to \R^n, f: [0,T] \times \R^n \times \R \times \R^m$ are
continuous mappings.

Assume for the moment that $\mathbf{\rp}$ is actually a smooth path. Then this is connected to the PDE
\begin{align}
  \label{eqPDESmooth}
  &\partial_t u(t,x) + \frac{1}{2} \Tr[ \sigma(t,x) \sigma(t,x)^T D^2 u(t,x) ] + \langle b(t,x), Du(t,x) \rangle \notag \\
  &\qquad + f(t,x, u(t,x), D u(t,x) \sigma(t,x) ) + H(x,u(t,x)) \dot{\rp}_t = 0, \quad t \in [0,T), x \in \R^n, \\
  &u(T,x) = g(x), \quad x \in \R^n. \notag
\end{align}

We will make this connection explicit after introducing the following adaption (and strengthening)
of previous assumptions for the Markovian setting:

\begin{itemize}
  \item[(MA1)]
    There exists a constant $C_\sigma > 0$ such that for $(t,x) \in [0,T] \times \R^n$
    \begin{align*}
      |\sigma(t,x)| &\le C_\sigma, \\
      |\partial_{x_i} \sigma(t,x)| &\le C_\sigma, \quad i=1,\dots,n.
    \end{align*}

  \item[(MA2)]
    There exists a constant $C_b > 0$ such that for $(t,x) \in [0,T] \times \R^n$
    \begin{align*}
      |b(t,x)| &\le C_b, \\
      |\partial_x b(t,x)| &\le C_b.
    \end{align*}
    %%
    % (MA1) and (MA2) especially induce (H4) of Kobylanski.
    %%

  % \item[(MF0)]
  %   $f$ is continuous

  \item[(MF1)]
    There exists a constant $C_{1,f} > 0$ such that for $(t,x,\uu,\pp) \in [0,T] \times \R^n \times \R \times \R^n$
    \begin{align*}
      &|f(t,x,\uu,\pp)| \le C_{1,f}, \\ %+ C_{1,f} |\pp|^2, \\
      &|\partial_\pp f(t,x,\uu,\pp)| \le C_{1,f}. %+ C_{1,f} |\pp|.
    \end{align*}

  \item[(MF2)]
    There exists a constant $C_{2,f} > 0$ such that such that for $(t,x,\uu,\pp) \in [0,T] \times \R^n \times \R \times \R^n$
    \begin{align*}
      \partial_\uu f(t,x,\uu,\pp) \le C_{2,f}.
    \end{align*}

  \item[(MF3)]
    There exists a constant $C_{3,f} > 0$ such that such that for $(t,x,\uu,\pp) \in [0,T] \times \R^n \times \R \times \R^n$
    \begin{align*}
      \partial_x f(t,x,\uu,\pp) \le C_{3,f} + C_{3,f} |\pp|^2,
    \end{align*}
    and $f$ is uniformly continuous in $x$, uniformly in $(t,y,z)$.
    %%
    % (MF1), (MF2) and (MF3) especially induce (H5) of Kobylanski.
    %%

  \item[(MG1)]
  $g$ is bounded and uniformly continuous.

\end{itemize}

We again consider for a smooth (or rough) path $\rp$ the flow
\begin{align}
  \label{eqFlowRedundant}
  \phi(t,x,y) = y + \int_t^T \sum_{k=1}^d H_k(x, \phi(r,x,y)) d \rp^k(r).
\end{align}

In what follows $BUC([0,T] \times \R^n)$ (resp. $BUC(\R^n)$) denotes the space of bounded uniformly continuous functions on $[0,T] \times \R^n$ (resp. $\R^n$) with the topology of uniform convergence on compacta.
\begin{proposition}
  \label{propBSDEToViscositySolution}
  Assume (MA1), (MA2), (MF1), (MF2), (MF3), (MG1) and let $H$ be Lipschitz on $\R^n \times \R$.
  For every $(t_0,x_0) \in [0,T]\times \R^n$ let $(Y^{t_0,x_0},Z^{t_0,x_0})$ be the solution to \eqref{eqFBSDE}
  Then $u(t,x) := Y^{t,x}_t$ is a viscosity solution to \eqref{eqPDESmooth} in $BUC([0,T], \R^n)$.
  It is the only viscosity solution in this space.
\end{proposition}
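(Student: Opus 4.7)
I would follow the nonlinear Feynman--Kac strategy of Pardoux--Peng/Peng, suitably adapted to the present generator which, for a smooth $\mathbf{\rp}$, is globally Lipschitz in $z$ (by (MF1)) and Lipschitz in $y$ (through $H$ and $f$), so the BSDE is standard.

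\emph{Step 1 (flow property).} Fix $t_0 \le t \le T$ and $x_0 \in \R^n$. I would first establish the dynamic programming identity
\[
  Y^{t_0,x_0}_t \;=\; u\bigl(t, X^{t_0,x_0}_t\bigr) \quad \P\text{-a.s.}
\]
This comes from uniqueness of the BSDE (Lemma \ref{lemBSDEExistence}/Kobylanski): both sides solve the same BSDE on $[t,T]$ with terminal $g(X^{t_0,x_0}_T)$, once one exploits the strong Markov property of $X$ and the fact that the driver is a deterministic function of $(r, X_r, Y, Z)$.

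\emph{Step 2 (regularity of $u$).} Boundedness of $u$ is the standard $L^{\infty}$ a priori estimate for BSDEs (Corollary 2.2 in Kobylanski), giving $\|u\|_\infty \le \|g\|_\infty + T C_{1,f}$. For joint continuity, I would combine (a) Lipschitz dependence of $X^{t_0,x_0}$ on $(t_0,x_0)$ under (MA1)--(MA2); (b) stability of the BSDE with respect to terminal data and driver, via Theorem 2.8 of Kobylanski; and (c) the flow identity of Step 1, together with (MG1). The outcome is $u \in BUC([0,T]\times\R^n)$.

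\emph{Step 3 (viscosity solution).} Let $\varphi\in C^{1,2}$ touch $u$ from above at $(t_0,x_0)$, i.e.\ $\varphi\ge u$, $\varphi(t_0,x_0)=u(t_0,x_0)$. Applying It\^o's formula to $\varphi(s,X^{t_0,x_0}_s)$ on $[t_0,t_0+h]$ writes $\varphi(\cdot, X^{t_0,x_0}_\cdot)$ as the $Y$-component of a BSDE with driver $-\partial_t\varphi - \tfrac12\Tr(\sigma\sigma^T D^2\varphi)-\langle b,D\varphi\rangle$ and terminal $\varphi(t_0+h,X^{t_0,x_0}_{t_0+h})$. By Step 1 the original BSDE on $[t_0,t_0+h]$ has terminal $u(t_0+h,X^{t_0,x_0}_{t_0+h})\le \varphi(t_0+h,X^{t_0,x_0}_{t_0+h})$ and driver $f(r,X_r,\cdot,\cdot)+H(X_r,\cdot)\dot{\rp}_r$. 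The BSDE comparison theorem of Kobylanski (applicable since in the Markovian setting $f$ is bounded with bounded $z$-derivative and $H\dot\rp$ is Lipschitz in $y$) gives $u(t_0,x_0)\le \varphi(t_0,x_0)$ on $[t_0,t_0+h]$; dividing the comparison inequality by $h$ and letting $h\downarrow 0$ yields
\[
  -\partial_t\varphi(t_0,x_0) - \tfrac{1}{2}\Tr[\sigma\sigma^T D^2\varphi](t_0,x_0)
  -\langle b, D\varphi\rangle(t_0,x_0) - f(\cdot) - H(x_0,\varphi)\dot{\rp}_{t_0} \;\le\; 0,
\]
i.e.\ the subsolution inequality. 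Supersolution is symmetric.

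\emph{Step 4 (uniqueness).} For smooth $\rp$, the Hamiltonian in \eqref{eqPDESmooth} is bounded and Lipschitz in $(u,Du)$ on bounded sets (by (MF1), Lipschitz $H$, boundedness of $\dot\rp$), with $\sigma,b$ Lipschitz uniformly in $t$ by (MA1)--(MA2) and terminal data $g\in BUC$ by (MG1). This places \eqref{eqPDESmooth} squarely within the scope of the Crandall--Ishii--Lions comparison principle for $BUC$ viscosity solutions of semilinear parabolic equations, yielding uniqueness.

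\emph{Expected obstacle.} The most delicate step is Step 3, namely the clean passage from the BSDE comparison on $[t_0,t_0+h]$ to the pointwise viscosity inequality; this is the point at which the $z$-Lipschitz bound in (MF1) (rather than only quadratic growth as in (F1)) is genuinely used, and where one must verify that the ``test BSDE'' obtained from It\^o's formula falls within the hypotheses of Kobylanski's comparison. Step 4 is essentially classical once the Hamiltonian is seen to be Lipschitz in the Markovian, smooth-$\rp$ case.
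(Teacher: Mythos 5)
Your overall plan reconstructs the same nonlinear Feynman--Kac argument that the paper delegates to its references: existence, boundedness and $BUC$-regularity of $u$ is Proposition 2.5 of Barles--Buckdahn--Pardoux (the BSDE argument sketched in your Steps 1--3), and uniqueness is Theorem \ref{thmComparisonForParabolic}, which is exactly the Crandall--Ishii--Lions comparison you invoke in Step 4. So the strategy agrees with the paper's; the paper simply compresses it into two citations.

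Step 3 as written, however, has a genuine gap. You apply comparison between the BSDE for $(u(\cdot,X_\cdot),Z_\cdot)$ and the BSDE obtained via It\^o from $\varphi(\cdot,X_\cdot)$; but these have \emph{different} drivers, and the comparison theorem would require an ordering of those drivers at the solution --- which is precisely the viscosity inequality one is trying to establish, so the argument is circular. Moreover, the conclusion you draw, $u(t_0,x_0)\le\varphi(t_0,x_0)$, holds by hypothesis (with equality), and it is unclear what ``dividing the comparison inequality by $h$'' refers to. The standard repair, which is the content of BBP Prop.~2.5 (going back to Peng), is to introduce an intermediate BSDE $(\hat Y^h,\hat Z^h)$ on $[t_0,t_0+h]$ with the \emph{same} driver $f+H\dot\rp$ as the original but terminal value $\varphi(t_0+h,X_{t_0+h})$. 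Comparison with the original BSDE (same driver, $\varphi\ge u$ at $t_0+h$) yields $\hat Y^h_{t_0}\ge u(t_0,x_0)=\varphi(t_0,x_0)$; writing $\varphi(t_0,x_0)$ via It\^o and $\hat Y^h_{t_0}$ via the BSDE representation and subtracting produces
$0\le \E\int_{t_0}^{t_0+h}\bigl[(f+H\dot\rp)(r,X_r,\hat Y^h_r,\hat Z^h_r)+\partial_t\varphi+\tfrac12\Tr(\sigma\sigma^T D^2\varphi)+\langle b,D\varphi\rangle\bigr]dr,$
and the $h^{-1}$-limit of this (using the Lipschitz $Z$-estimates from (MF1) to control $\hat Z^h\to D\varphi\,\sigma$) is the subsolution inequality. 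Equivalently one argues by contradiction, assuming strict failure of the inequality at $(t_0,x_0)$ and deriving $\varphi(t_0,x_0)>u(t_0,x_0)$.
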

\begin{proof}
  The fact that $u$ is a bounded, uniformly continuous viscosity solution follows
  from Proposition 2.5 in \cite{bibBarlesBuckdahnPardoux}.
  Uniqueness of a viscosity solution to \eqref{eqPDESmooth} follows from Theorem \ref{thmComparisonForParabolic}.
\end{proof}

Let now $p\ge1$, $\rp^n, n=1,2,\dots,$ be smooth paths in $\R^d$ and $\gamma > p$.
Assume $\rp^n \to \mathbf{\rp}^0$
in $p$-variation, for a $\mathbf{\rp}^0 \in C^{\pvar}( [0,T], G^{[p]}(\R^d) )$.
Assume (MA1), (MA2), (MF1), (MF2), (MF3), (MG1) and ($H_{p,\gamma}$), 
so that especially Theorem \ref{thmBSDEConvergence} holds true.
It follows that the corresponding $u^n$ (as given in Theorem \ref{propBSDEToViscositySolution}) converge
\textit{pointwise} to some function $u^0$, i.e.
\begin{align*}
  u^n(t,x) \to u^0(t,x) \qquad t \in [0,T], x \in \R^n.
\end{align*}

Again, the limiting function $u^0$ does not depend on the approximating sequence, but only on the limiting rough path $\mathbf{\rp}^0$.
We could hence define this $u^0$ to be the solution solution to \eqref{eqPDESmooth}.
But it is not straightforward, via this approach, to show uniform convergenc on compacta as well as continuity of the solution map.
We hence work directly on the PDEs, as in \cite{bibCaruanaFrizOberhauser} and \cite{bibFrizOberhauser}.
First we get the respective versions of Lemma \ref{lemBSDETransformation} and Lemma \ref{lemBSDEPropertiesOfFTilde}.

\begin{lemma}
  \label{lemPDETransformation}
  Assume (MA1), (MA2), (MF1), (MF2), (MG1) and let $H(x,\cdot) = (H_1(x,\cdot), \dots, H_d(x,\cdot))$ be a collection of Lipschitz vector fields on $\R$.
  Let a smooth path $\rp$ be given.
  Let $u$ be the unique viscosity solution to \eqref{eqPDESmooth}.

  Then $v(t,x) := \phi^{-1}(t,x,u(t,x))$ is a viscosity solution to
  \begin{align*}
    &\partial_t v(t,x) + \frac{1}{2} \Tr[ \sigma(t,x) \sigma(t,x)^T D^2 v(t,x) ] + \langle b(t,x), Dv(t,x) \rangle \notag \\
    &\qquad + \tilde{f}(t,x, v(t,x), D v(t,x) \sigma(t,x)) = 0, \quad t \in [0,T), x \in \R^n, \\
    &v(T,x) = g(x), \quad x \in \R^n, \notag
  \end{align*}
  where (in what follows the $\phi$ will always be evaluated at $(t,x,\tu)$)
  \begin{align*}
    \tilde{f}(t,x,\tu,\tp)
    &= 
    \frac{1}{\partial_y \phi}
    \Bigl\{  
      f\left(t, \phi, \partial_y \phi \tp + \partial_x \phi \sigma(t,x) \right)
      +
      \langle \partial_x \phi, b(t,x) \rangle
      +
      \frac{1}{2} \Tr\left[ \partial_{xx} \phi \sigma(t,x) \sigma(t,x)^T \right] \\
      &\qquad \qquad
      +
      \langle \tp, \left( \partial_{xy} \phi \sigma(t,x) \right)^T  \rangle
      +
      \frac{1}{2} \partial_{yy} \phi |\tp|^2
    \Bigr\}.
  \end{align*}
\end{lemma}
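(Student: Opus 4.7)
The plan is to mimic the Doss-Sussmann computation of Lemma \ref{lemBSDETransformation}, but at the level of viscosity test functions instead of It\^o's formula. Since $\rp$ is smooth and $H(x,\cdot)$ is Lipschitz, the flow map $\phi(t,x,\cdot):\R\to\R$ is, for each fixed $(t,x)$, a $C^1$ diffeomorphism with $\partial_y \phi > 0$ (obtained by solving the linear ODE satisfied by $\partial_y \phi$, which is strictly positive at time $T$). In particular $y\mapsto \phi(t,x,y)$ is strictly increasing, so composition with $\phi$ preserves the touching relations used to define viscosity sub- and supersolutions.

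The key identity driving the cancellation is obtained by differentiating \eqref{eqFlowRedundant} in $t$: $\partial_t \phi(t,x,y) = -\sum_{k=1}^d H_k(x,\phi(t,x,y))\,\dot\rp^k(t)$, together with $\partial_t \psi + \partial_y \psi\,\partial_t \phi = 0$ for $\psi := \phi^{-1}$. First I would perform the formal classical computation: assuming $u \in C^{1,2}$, apply the chain rule to $v(t,x) = \psi(t,x,u(t,x))$ to express $\partial_t v$, $Dv$, $D^2 v$ in terms of the derivatives of $\psi$ and of $u$, and then substitute $\partial_t u$ from \eqref{eqPDESmooth}. Using exactly the algebraic identities between derivatives of $\phi$ and $\psi$ that were spelled out in the proof of Lemma \ref{lemBSDETransformation} (relations of the form $\partial_x \psi = -\partial_x\phi/\partial_y\phi$, $1 = \partial_y\psi\,\partial_y\phi$, and so on), the term $H(x,u)\,\dot\rp_t$ is cancelled by $\partial_t\psi$, and the remaining terms collapse to precisely $\tilde f(t,x,v,Dv\,\sigma(t,x))$.

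To promote this to the viscosity-solution statement, let $\varphi\in C^{1,2}$ touch $v$ from above at some $(t_0,x_0)\in [0,T)\times\R^n$ with $v(t_0,x_0)=\varphi(t_0,x_0)$. Define the transformed test function
\begin{align*}
  \Phi(t,x) := \phi(t,x,\varphi(t,x)).
\end{align*}
Since $\phi(t,x,\cdot)$ is strictly increasing, $\Phi$ touches $u$ from above at $(t_0,x_0)$, with $\Phi(t_0,x_0)=u(t_0,x_0)$. By the smoothness of $\phi$ and $\varphi$, $\Phi\in C^{1,2}$, so one may plug $\Phi$ into the subsolution inequality for $u$ at $(t_0,x_0)$. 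Computing $\partial_t \Phi, D\Phi, D^2\Phi$ via the chain rule (with $\partial_t\phi = -\sum_k H_k(x,\phi)\dot\rp^k$) and reorganising yields exactly the subsolution inequality for $v$ against the test function $\varphi$ with driver $\tilde f$. The supersolution property follows by the symmetric argument with a test function touching $v$ from below.

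The heart of the proof is therefore purely algebraic and was already carried out in Lemma \ref{lemBSDETransformation}; the new content is the viscosity bookkeeping. I expect the main obstacle to be organisational rather than technical: one must carefully keep track of where each $\phi$ and each derivative is evaluated (at $(t,x,\varphi(t,x))$ versus $(t,x,v(t,x))$) and verify that the cancellation involving $\partial_t\phi$ still works when $\phi$ is fed $\varphi$ instead of $u$. Once this is done the identification with the $\tilde f$ given in \eqref{eqInducedFTilde} is immediate.
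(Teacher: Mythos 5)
Your proof is correct, but it takes a genuinely different route from the paper's. The paper disposes of this lemma in one line, by citing Lemma~5 of \cite{bibFrizOberhauser}, which is precisely the ``inner/outer transformation of viscosity solutions'' result that you re-derive from scratch. What you do is unpack that black box: you observe that $y \mapsto \phi(t,x,y)$ is a strictly increasing diffeomorphism, so touching relations are preserved, you push test functions through $\phi$ (defining $\Phi(t,x) = \phi(t,x,\varphi(t,x))$), and you exploit the cancellation $\partial_t\phi(t,x,y) = -\sum_k H_k(x,\phi)\,\dot\rp^k(t)$ at the touching point $(t_0,x_0)$, where $\phi(t_0,x_0,\varphi(t_0,x_0)) = u(t_0,x_0)$, to eliminate the rough driver. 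The algebraic bookkeeping that identifies the residual as $\tilde f$ is identical to what is carried out in Lemma~\ref{lemBSDETransformation}, as you note. The trade-off is the usual one: the citation is shorter, but your direct argument is self-contained and makes transparent exactly why the transformation respects viscosity sub- and supersolutions, which the reader otherwise has to trust. One small remark: as stated the lemma assumes only that $H$ is Lipschitz, while both the formula for $\tilde f$ and your appeal to $\Phi \in C^{1,2}$ implicitly use higher regularity of $\phi$ in $(x,y)$; the paper is equally loose on this point (cf.\ Lemma~\ref{lemBSDETransformation}) and in practice ($H_{p,\gamma}$) supplies what is needed, so this is not a gap in your argument relative to the paper.
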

\begin{proof}
  This is an application of Lemma 5 in \cite{bibFrizOberhauser}.
\end{proof}

\begin{lemma}
  \label{lemPDEPropertiesOfFTilde}
  Let $p\ge 1$, $\mathbf{\rp} \in C^{\pvar}( [0,T], G^{[p]}(\R^d) )$ and $\gamma > p$.
  Assume (MA0), (MA1), (MA2), (MF1), (MF2), (MF3), (G1) and ($H_{p,\gamma}$).
  Let $\phi$ be the flow corresponding to equation \eqref{eqFlowRedundant} (solved as a rough differential equation).
  Then
  \begin{align*}
    \tilde{f}(t,x,\tu,\tp)
    &= 
    \frac{1}{\partial_y \phi}
    \Bigl\{  
      f\left(t, \phi, \partial_y \phi \tp + \partial_x \phi \sigma(t,x) \right)
      +
      \langle \partial_x \phi, b(t,x) \rangle
      +
      \frac{1}{2} \Tr\left[ \partial_{xx} \phi \sigma(t,x) \sigma(t,x)^T \right] \\
      &\qquad \qquad
      +
      \langle \tp, \left( \partial_{xy} \phi \sigma(t,x) \right)^T  \rangle
      +
      \frac{1}{2} \partial_{yy} \phi |\tp|^2
    \Bigr\}
  \end{align*}
  satisfies:
  \begin{itemize}
    \item There exists a constant $\tilde{C}_{1,f} > 0$ depending only on
      $C_\sigma$, $C_\itob$,
      $C_{1,f}$,
      $C_H$ and
      $||\mathbf{\rp}||_{\pvar;[0,T]}$
      such that for $(t,x,\tu,\tp) \in [0,T] \times \R^n \times \R \times \R^n$
      \begin{align*}
        |\tilde{f}(t,x,\tu,\tp)| &\le \tilde{C}_{1,f} + \tilde{C}_{1,f} |\tp|^2, \\
        |\partial_\tp \tilde{f}(t,x,\tu,\tp)| &\le \tilde{C}_{1,f} + \tilde{C}_{1,f} |\tp|. 
      \end{align*}
    \item
      There exists a constant $\tCunif > 0$ that only depends on
      $C_\sigma$, $C_\itob$,
      $C_{2,f}$,
      $C_H$ and
      $||\mathbf{\rp}||_{\pvar;[0,T]}$
      such that
      for every $\varepsilon > 0$ there exists an $h_\varepsilon>0$
      that only depends on
      $C_\sigma$, $C_\itob$,
      $C_H$ and
      $||\mathbf{\rp}||_{\pvar;[0,T]}$
      such that for $(t,x,\tu,\tp) \in [T-h_\varepsilon,T] \times \R^n \times \R \times \R^n$
      \begin{align*}
        \partial_\tu \tilde{f}(t,x,\tu,\tp) \le \tCunif + \varepsilon |\tp|^2.
      \end{align*}
    \item
      There exists a $\tilde{C}_{3,f} > 0$ that only depends on 
      $C_\sigma$, $C_\itob$,
      $C_{2,f}$,
      $C_{3,f}$,
      $C_H$ and
      $||\mathbf{\rp}||_{\pvar;[0,T]}$
      such that for $(t,x,\tu,\tp) \in [0,T] \times \R^n \times \R \times \R^n$
      \begin{align*}
        \partial_x \tilde{f}(t,x,\tu,\tp) \le \tilde{C}_{3,f} + \tilde{C}_{3,f} |\tp|^2.
      \end{align*}
  \end{itemize}
\end{lemma}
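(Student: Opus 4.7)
The plan is to verify the three bullet points in turn, leveraging the structure of the non-Markovian analogue Lemma \ref{lemBSDEPropertiesOfFTilde} as much as possible.

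For items one and two the proof is essentially a copy of the corresponding steps in Lemma \ref{lemBSDEPropertiesOfFTilde}: the only change in $\tilde f$ is that the random $\sigma_t(\omega),\itob_t(\omega)$ have been replaced by the deterministic $\sigma(t,x), b(t,x)$, which by (MA1), (MA2) satisfy the same pointwise bounds. Term-by-term majorisation, invoking (MF1) to handle the $f$-term and Lemma \ref{lemFlowEstimates} to control the $\phi$-derivatives uniformly in $x$ under $(H_{p,\gamma})$, yields $|\tilde f| \le \tilde C_{1,f}(1+|\tp|^2)$ and $|\partial_{\tp}\tilde f| \le \tilde C_{1,f}(1+|\tp|)$. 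For the second bullet, differentiating $\tilde f$ in $\tu$ produces a sum of uniformly bounded terms (using (MF2) together with the flow bounds) plus a single quadratic contribution $\alpha(t,x,\tu)|\tp|^2$ whose coefficient $\alpha$ involves second derivatives of $\phi$; by the fixed-point structure of \eqref{eqFlowRedundant} these derivatives vanish at $t=T$, so uniform continuity of the flow (once more Lemma \ref{lemFlowEstimates}) delivers an $h_\varepsilon > 0$ with $|\alpha| < \varepsilon$ on $[T-h_\varepsilon, T]$.

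The genuinely new item is the third one, and here (MF3) together with (MA1), (MA2) enters. I differentiate
\[
  \tilde f(t,x,\tu,\tp) = \frac{1}{\partial_y \phi}\Bigl\{ f + \langle \partial_x\phi, b\rangle + \tfrac12\Tr[\partial_{xx}\phi\,\sigma\sigma^T] + \langle\tp,(\partial_{xy}\phi\,\sigma)^T\rangle + \tfrac12\partial_{yy}\phi\,|\tp|^2 \Bigr\}
\]
with respect to $x$ by the chain rule, tracking both the explicit $x$-dependence through $\sigma(t,x), b(t,x)$ and the first slot of $f$, and the implicit dependence via $\phi(t,x,\tu)$. Every derivative of $\phi$ of order at most three in $(x,y)$ that appears is uniformly bounded by Lemma \ref{lemFlowEstimates} under $(H_{p,\gamma})$ (using the $\gamma+2$ regularity of $H$), and the bounds on $|\sigma|,|\partial_x\sigma|,|b|,|\partial_x b|$ come from (MA1), (MA2). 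The only place a contribution of order $|\tp|^2$ can arise apart from the existing $\tfrac12\partial_{yy}\phi\,|\tp|^2$ term is through $\partial_x f$, which by (MF3) obeys
\[
  \partial_x f\bigl(t,\phi,\partial_y\phi\,\tp+\partial_x\phi\,\sigma\bigr) \le C_{3,f} + C_{3,f}\,|\partial_y\phi\,\tp + \partial_x\phi\,\sigma|^2 \le \tilde C_{3,f}\bigl(1+|\tp|^2\bigr).
\]
All remaining terms are either uniformly bounded or linear in $|\tp|$; the latter are absorbed via $2|\tp| \le 1+|\tp|^2$. Collecting everything produces the desired bound $\partial_x \tilde f \le \tilde C_{3,f}(1+|\tp|^2)$ with a constant depending only on the quantities listed in the statement.

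The main obstacle is purely bookkeeping: one must carefully enumerate all the terms produced by the chain rule, including those from differentiating the reciprocal $1/\partial_y\phi$, and confirm that Lemma \ref{lemFlowEstimates} supplies uniform bounds on each flow-derivative $\partial^\alpha_{(x,y)}\phi$ with $|\alpha|\le 3$ that appears. Once that enumeration is in place, the three growth estimates are mechanical consequences of (MF1)--(MF3) and (MA1), (MA2).
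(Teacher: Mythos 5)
Your proposal is correct and follows essentially the same route as the paper: delegate the first three inequalities to the proof of Lemma~\ref{lemBSDEPropertiesOfFTilde}, then for the third bullet differentiate $\tilde f$ in $x$ by the chain rule and bound each resulting term via Lemma~\ref{lemFlowEstimates}, (MA1), (MA2) and (MF1)--(MF3). One small point in your favour: you explicitly record the direct $\partial_x f$ term (bounded by (MF3)) that arises because $f=f(t,x,\uu,\pp)$ carries an explicit $x$-slot in the Markovian setting; the paper's displayed chain-rule expansion of $\partial_{x_i}\tilde f$ omits this term even though its stated constant depends on $C_{3,f}$, so your bookkeeping is the more accurate of the two.
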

\begin{proof}
  The first three inequalities follow as in Lemma \ref{lemBSDEPropertiesOfFTilde}.
  Now for $i \le n$ we have
  \begin{align*}
    &\partial_{x_i} \tilde{f}(t,x,\tu,\tp)\\
    &=
      - \partial_{x_i \uu} \phi \frac{1}{ \partial_\uu \phi } \tilde{f}(t,x,\tu,\tp) \\
      &\quad
      + \frac{1}{\partial_\uu \phi}
      \Bigl[
        \partial_\uu f( t,x,\phi, \partial_\uu \phi \tp + \partial_x \phi \sigma(t,x) ) \partial_{x_i} \phi \\
        &\qquad
        +
        \partial_\pp f( t,x,\phi, \partial_\uu \phi \tp + \partial_x \phi \sigma(t,x) )
        \left(
          \partial_{x_i \uu} \phi \tp
          +
          \partial_{x_i x} \phi \sigma(t,x) + \partial_x \phi \partial_{x_i} \sigma(t,x)
        \right)^T \\
        &\qquad
        +
        \langle \partial_{x_i x} \phi, b(t,x) \rangle
        +
        \langle \partial_x \phi, \partial_{x_i} b(t,x) \rangle \\
        &\qquad
        +
        \frac{1}{2} \Tr\left[ \partial_{x_i xx} \phi \sigma(t,x) \sigma(t,x)^T \right]
        +
        \frac{1}{2} \Tr\left[ \partial_{xx} \phi \partial_{x_i} \sigma(t,x) \sigma(t,x)^T \right]
        +
        \frac{1}{2} \Tr\left[ \partial_{xx} \phi \sigma(t,x) \partial_{x_i} \sigma(t,x)^T \right] \\
        &\qquad
        +
        \langle \tp, \left( \partial_{x_i x\uu} \phi \sigma(t,x) \right)^T  \rangle
        +
        \langle \tp, \left( \partial_{x\uu} \phi \partial_{x_i} \sigma(t,x) \right)^T  \rangle
        +
        \frac{1}{2} \partial_{x_i \uu\uu} \phi |\tp|^2
      \Bigr].
  \end{align*}
  So
  \begin{align*}
    &|\partial_{x_i} \tilde{f}(t,x,\tu,\tp)|\\
    &\le | \partial_{x_i \uu} \phi|  |\frac{1}{ \partial_\uu \phi }| |\tilde{f}(t,x,\tu,\tp)| \\
      &\quad
      + |\frac{1}{\partial_\uu \phi}|
      \Bigl[
        |\partial_\uu f( t,x,\phi, \partial_\uu \phi \tp + \partial_x \phi \sigma(t,x) )| |\partial_{x_i} \phi| \\
        &\qquad
        +
        |\partial_\pp f( t,x,\phi, \partial_\uu \phi \tp + \partial_x \phi \sigma(t,x) )|
        \left(
          |\partial_{x_i \uu} \phi| |\tp|
          +
          |\partial_{x_i x} \phi| |\sigma(t,x)| + |\partial_x \phi| |\partial_{x_i} \sigma(t,x)|
        \right) \\
        &\qquad
        +
        |\langle \partial_{x_i x} \phi| |b(t,x)|
        +
        |\partial_x \phi| |\partial_{x_i} b(t,x)| \\
        &\qquad
        +
        \frac{1}{2} |\partial_{x_i xx} \phi| |\sigma(t,x)|^2
        +
        |\partial_{xx} \phi| |\partial_{x_i} \sigma(t,x)| |\sigma(t,x)| \\
        &\qquad
        +
        |\tp| |\partial_{x_i x\uu} \phi| |\sigma(t,x)|
        +
        |\tp| |\partial_{x\uu} \phi| |\partial_{x_i} \sigma(t,x)|
        +
        \frac{1}{2} |\partial_{x_i \uu\uu} \phi| |\tp|^2
      \Bigr]
    | \\
    &\le
    \tilde{C}_{3,f} + \tilde{C}_{3,f} |\tp|^2
  \end{align*}
  with a constant $\tilde{C}_{3,f}$ only depending on
  $C_\sigma$, $C_\itob$,
  $C_{2,f}$,
  $C_{3,f}$,
  $C_H$ and
  $||\mathbf{\rp}||_{\pvar;[0,T]}$.
  Here we have used the first inequality of the statement to bound $\tilde{f}$,
  (F1), (F2) to bound the $\uu$ and $\pp$ derivative of $f$
  and Lemma \ref{lemFlowEstimates} to bound the flow and its derivatives.

  Now summing over $i$ we get the desired result.
\end{proof}

\begin{theorem}
  Let $p\ge1$, $\gamma > p$ and
  let $\rp^n, n=1,2,\dots$ be smooth paths in $\R^d$.
  Assume
  \begin{align*}
    \rp^n \to \mathbf{\rp}
  \end{align*}
  in $p$-variation, for a $\mathbf{\rp} \in C^{\pvar}( [0,T], G^{[p]}(\R^d) )$.
  Assume (MA1), (MA2), (MF1), (MF2), (MF3), (MG1) and ($H_{p,\gamma}$).
  Let $u^n \in BUC([0,T] \times \R^n)$ be the solution to \eqref{eqPDESmooth} with driving path $\rp^n$ (Theorem \ref{propBSDEToViscositySolution}).
  Then there exists a $u \in BUC([0,T] \times \R^n)$, only dependent on $\mathbf{\rp}$ but not on the approximating sequence $\rp^n$, such that
  \begin{align*}
    u^n \to u \qquad \text{ locally uniformly}. 
  \end{align*}F
  We write (formally)
  \begin{align}
    \label{eqPDERough}
    &du +
      \left[ \frac{1}{2} \Tr[ \sigma(t,x) \sigma(t,x)^T D^2 u(t,x) ] + \langle b(t,x), Du(t,x) \rangle
             + f(t,x, u(t,x), D u(t,x) \sigma(t,x) )
      \right] dt \notag \\
    &\qquad
      + H(x,u(t,x)) d \mathbf{\rp}(t) = 0, \quad t \in (0,T), x \in \R^n, \\
    &u(T,x) = g(x), \quad x \in \R^n. \notag
  \end{align}

  Furthermore, the solution map
  \begin{align*}
    C^{\pvar}([0,T], G^{[p]}(\R^d)) \times BUC(\R^n) &\to BUC( [0,T] \times \R^n), \\
    (\mathbf{\rp}, g) &\mapsto u
  \end{align*}
  is continuous.
\end{theorem}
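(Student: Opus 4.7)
The plan is to mirror the strategy of the proof of Theorem \ref{thmBSDEConvergence}, transporting it from BSDEs to PDEs via the Doss--Sussman type transformation of Lemma \ref{lemPDETransformation}. For each $n \geq 1$ and for $n = 0$ (writing $\mathbf{\rp}^0 = \mathbf{\rp}$ and interpreting \eqref{eqFlowRedundant} as a rough differential equation), let $\phi^n$ denote the associated flow, and set $v^n(t,x) := (\phi^n)^{-1}(t,x, u^n(t,x))$. By Lemma \ref{lemPDETransformation}, $v^n$ is, for $n \geq 1$, the unique viscosity solution in $BUC([0,T] \times \R^n)$ of a transformed PDE with driver $\tilde{f}^n$ and no rough perturbation. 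The strategy is to prove locally uniform convergence $v^n \to v^0$ on a terminal sub-interval $[T-h,T]$, transport this back to $u^n$ through the flow, and then iterate in sub-intervals of length $h$ to cover $[0,T]$.

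On the sub-interval $[T-h,T]$, Lemma \ref{lemPDEPropertiesOfFTilde} applied to $\mathbf{\rp}$ yields quadratic growth of $\tilde{f}^0$ in $\tp$, a Lipschitz-type bound on $\partial_\tp \tilde{f}^0$, the crucial one-sided estimate $\partial_\tu \tilde{f}^0 \leq \tCunif + \varepsilon|\tp|^2$ on $[T-h_\varepsilon, T]$, and a quadratic $x$-derivative bound. Setting $M := ||g||_\infty + T\,\tilde{C}_{1,f}$ produces an $L^\infty$ bound on $v^n$ uniform in $n \geq 0$ via the viscosity maximum principle. Choosing $\varepsilon$ small depending only on $\tilde{C}_{1,f}$ and $M$, the comparison theorem (Theorem \ref{thmComparisonForParabolic}) becomes applicable on $[T-h,T]$ with $h = h_\varepsilon$ depending only on $C_\sigma, C_b, C_H$ and $||\mathbf{\rp}||_{\pvar;[0,T]}$, giving uniqueness of a $BUC$ viscosity solution of the limiting transformed PDE. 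By Lemma \ref{lemConvergenceOfFlows} the flows $\phi^n$ and their relevant derivatives converge locally uniformly, so $\tilde{f}^n \to \tilde{f}^0$ locally uniformly; the Barles--Perthame half-relaxed limits method then shows that the upper and lower relaxed limits of $(v^n)$ are viscosity sub- and supersolutions of the limiting transformed PDE with terminal condition $g$, and the comparison principle just established forces them to coincide, yielding locally uniform convergence $v^n \to v^0$. Transporting back via $u^0(t,x) := \phi^0(t,x, v^0(t,x))$ and local uniform convergence of $\phi^n$ gives $u^n \to u^0$ locally uniformly on $[T-h,T]$.

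To extend to $[0,T]$, I would iterate on $[T-2h, T-h]$ using the flow $\phi^{n,T-h}$ restarted at time $T-h$ and with new terminal data $u^n(T-h, \cdot) \to u^0(T-h,\cdot)$ locally uniformly and bounded by $M$. Since Lemma \ref{lemPDEPropertiesOfFTilde} yields the same constants $\tilde{C}_{1,f}, \tCunif$ and the same $h$ on every sub-interval, at most $\lceil T/h \rceil$ iterations suffice and we obtain $u^n \to u^0$ locally uniformly on all of $[0,T]$, with $u^0 \in BUC([0,T]\times\R^n)$ depending only on $\mathbf{\rp}$ and not on the approximating sequence. For continuity of the solution map, fix $B > 0$ and restrict to data with $||\mathbf{\rp}||_{\pvar;[0,T]} \leq B$ and $||g||_\infty \leq B$; a common $h = h(B)$ then works, and the preceding argument applied to convergent sequences of data gives continuity on each such ball, hence on the whole space.

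The main obstacle I expect is ensuring that the a priori $L^\infty$ bound on $v^n$ does not deteriorate across the iteration: on $[T-2h, T-h]$ the new terminal value is $u^n(T-h, \cdot)$ rather than $g$, and one must invoke a maximum-principle estimate that produces the same bound $M$. This works precisely because the constants in Lemma \ref{lemPDEPropertiesOfFTilde} depend on the \emph{global} rough path norm $||\mathbf{\rp}||_{\pvar;[0,T]}$ rather than on the length of the current sub-interval; restarting the flow at each step while retaining the global norm as the control makes $M$ propagate unchanged through the iteration, mirroring the mechanism that drives the proof of Theorem \ref{thmBSDEConvergence}.
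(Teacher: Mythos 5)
Your proposal follows the paper's overall strategy faithfully: Doss--Sussman transformation via $\phi^n$, locally uniform convergence $\tilde f^n \to \tilde f^0$ from Lemma \ref{lemConvergenceOfFlows}, the Barles--Perthame half-relaxed-limits construction of sub-/supersolutions for the limiting transformed PDE, uniqueness on a terminal sub-interval of fixed length $h$, transport back through the flow, and iteration on sub-intervals with the crucial observation that the constants of Lemma \ref{lemPDEPropertiesOfFTilde} depend only on the global rough path norm so that $h$ and $M$ can be kept fixed across steps.

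However, there is one concrete error. You invoke Theorem \ref{thmComparisonForParabolic} as the comparison principle that collapses the half-relaxed limits $\bar v^0$ and $\underbar{v}^0$. That theorem requires the (weak) properness condition $F(t,x,s,p,X)-F(t,x,r,p,X)\le C(s-r)$ for all $r\le s$, which is equivalent to $\partial_y F$ being bounded above by a constant. The transformed driver $\tilde f^0$ fails this: Lemma \ref{lemPDEPropertiesOfFTilde} only gives the quadratic one-sided estimate $\partial_{\tilde y}\tilde f^0 \le \tilde C_{\mathrm{unif}} + \varepsilon|\tilde p|^2$ on $[T-h_\varepsilon,T]$, with the quadratic term present (and unbounded in $\tilde p$) for every positive $h_\varepsilon$. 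So Theorem \ref{thmComparisonForParabolic} does not apply to the transformed equation; it is used in the paper only for the untransformed smooth PDE in Proposition \ref{propBSDEToViscositySolution}, where $f$ itself is Lipschitz. The tool you need here is Theorem \ref{thmComparisonForPDE} (the second, ``quadratic'' comparison result in the appendix), which is purpose-built for drivers with this quadratic structure and delivers comparison on a short terminal interval $[T-h_{\varepsilon^*},T]$ with $h_{\varepsilon^*}$ determined by $M$, $\tilde C_{1,f}$, $\tilde C_{\mathrm{unif}}$. Once that theorem is substituted, your argument is essentially the paper's. A minor stylistic point: the paper derives the uniform $L^\infty$ bound on $v^n$ from the Feynman--Kac representation and Corollary 2.2 of \cite{bibKobylanski} rather than a direct PDE maximum principle, but it explicitly remarks that a PDE-only derivation (as you propose) is also possible, so this is not a gap.
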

\begin{remark}
  Equations like \eqref{eqPDERough} have been considered in \cite{bibFrizOberhauser}.
  The setting there is more general in the sense that the vector field in front of the rough path
  is allowed to also depend on the gradient.
  On the other hand, $f$ is independent of the gradient and $H$ is linear.

  For the proof we apply the same ideas as in the proof of Theorem 1 in \cite{bibCaruanaFrizOberhauser}.
  Since comparison on the entire interval $[0,T] $is a subtle issue,
  we mimick our analyis of the BSDE case (Theorem \ref{thmBSDEConvergence}) and proceed on
  small intervals; a similar remark was made in Lions-Souganidis \cite{bibLionsSouganidis}.
\end{remark}
\begin{proof}  For the sake of unified notation, the (rough PDE) solution $u$ will be written as $u^0$ in what follows; similarly, the involved rough path $\mathbf{\rp}$ will be written as  $\mathbf{\rp}^0$.

  1. \textbf{Existence}

  Let $\phi^n, n\ge 0$ be the (ODE, resp. RDE when $n=0$) solution flow
  \begin{align*}
    \phi^n(t,x,\uu) = \uu + \int_t^T H(x, \phi^n(r,x,\uu)) d\rp^n(r).
  \end{align*}
  
  Then, by Lemma \ref{lemPDETransformation}, for $n\ge 1$, $u^n$ is a solution to \eqref{eqPDESmooth} if and only if
  $v^n(t,x) := (\phi^n)^{-1}(t,x,u^n(t,x))$ is a solution to
  \begin{align}
    \label{eqPDETransformedI}
    &\partial_t v^n(t,x) + \frac{1}{2} \Tr[ \sigma(t,x) \sigma(t,x)^T D^2 v^n(t,x) ] + \langle b(t,x), Dv^n(t,x) \rangle \notag \\
    &\qquad + \tilde{f}^n(t,x, v^n(t,x), D v^n(t,x) \sigma(t,x) ) = 0, \quad t \in (0,T), x \in \R^n, \\
    &v^n(T,x) = g(x), \quad x \in \R^n, \notag
  \end{align}
  where
  \begin{align*}
    \tilde{f}^n(t,x,\tu,\tp)
    &= 
    \frac{1}{\partial_\uu \phi^n}
    \Bigl\{
      f\left(t, \phi^n, \partial_\uu \phi^n \tp + \partial_x \phi^n \sigma(t,x) \right)
      +
      \langle \partial_x \phi^n, b(t,x) \rangle
      +
      \frac{1}{2} \Tr\left[ \partial_{xx} \phi^n \sigma(t,x) \sigma(t,x)^T \right] \\
      &\qquad \qquad
      +
      \langle \tp, \left( \partial_{x\uu} \phi^n \sigma(t,x) \right)^T  \rangle
      +
      \frac{1}{2} \partial_{\uu\uu} \phi^n |\tp|^2
    \Bigr\}.
  \end{align*}

  In the proof of Theorem \ref{thmBSDEConvergence} we have already seen that
  $\tilde{f}^n \to \tilde{f}^0$, locally uniformly.
  From the method of semi-relaxed limits (Lemma 6.1, Remark 6.2-6.4 in \cite{bibCrandallIshiiLions}), the pointwise (relaxed) limits
  \begin{align*}
    \bar{v}^0 := \limsup^* v^n, \quad \underbar{v}^0 := \liminf_* v^n,
  \end{align*}
  are viscosity (sub resp. super) solutions to \eqref{eqPDETransformedI} with $n=0$.
  Here we have used the fact, that $\bar{v}^0$ and $\underbar{v}^0$ are indeed finite,
  say bounded in norm by $M>0$.
  This follows from the Feyman-Kac representation (Theorem \ref{propBSDEToViscositySolution}) for each $u^n$, in
  combination with bounds (uniform in $(t_0,x_0)$ and $n$) on the corresponding BSDEs (Corollary 2.2 in \cite{bibKobylanski}). (Although
  not completely obvious, such uniform bounds can also be obtained without BSDE arguments; one would need to exploit comparison for \eqref{eqPDESmooth}, and then \eqref{eqPDETransformedI}, clearly valid when $n \ge 1$, with rough path estimates for RDE solutions which will serve as sub- and super-solutions without spatial structure.)

  By Lemma \ref{lemPDEPropertiesOfFTilde} the function $\tilde{f}^0$ satisfies the conditions of Theorem \ref{thmComparisonForPDE}.
  Hence the PDE \eqref{eqPDETransformedI} for $n=0$ satisfies comparison on $[T-h,T]$ for $h$ sufficiently small, and
  $h$ only depends on $M$ and the constants $\tCunif$, $\tilde{C}_{1,f}$ and $\tilde{C}_{2,f}$ for $\tilde{f}^0$ given
  by Lemma \ref{lemPDEPropertiesOfFTilde}.
  So $v^0(t,x) := \bar{v}^0(t,x) = \underbar{v}^0(t,x), t \in [T-h,T]$ is the unique
  (and continuous, since $\bar{v}, \underbar{v}$ are respectively upper resp. lower semi-continuous) solution
  to \eqref{eqPDETransformedI} with $n=0$ on $[T-h,T]$.
  Moreover, using a Dini-type argument (Remark 6.4 in \cite{bibCrandallIshiiLions}), one sees that this limit must be uniform on compact sets.
  Undoing the transformation, we see that $u^n \to u^0$ locally uniformly on $[T-h,T]$, where
  $u^0(t,x) := \phi^0(t,x,v^0(t,x)),\ t \in [T-h,T]$.

  We proceed to the next subinterval. We use the same argument as above, we just work with a different transformation.
  For $n\ge 0$ let $\phi^{n,T-h}$ be the solution flow started at time $T-h$, i.e.
  \begin{align*}
    \phi^{n,T-h}(t,x,\uu) = \uu + \int_t^{T-h} H(x, \phi^{n,T-h}(r,x,\uu)) d\rp^n(r).
  \end{align*}
  
  Then, for $n\ge 1$, $u^n|_{[0,T-h]}$ is a solution to
  \begin{align*}
    &\partial_t u^n(t,x)
      + \frac{1}{2} \Tr[ \sigma(t,x) \sigma(t,x)^T D^2 u^n(t,x) ] + \langle b(t,x), Du^n(t,x) \rangle \\
    &\quad + f(t,x, u^n(t,x), D u^n(t,x) \sigma(t,x) ) + H(x,u^n(t,x)) \dot{\rp}_r = 0, \quad t \in [0,T-h], x \in \R^n, \\
    &u(T-h,x) = \phi^n( T-h, x, v^n(T-h,x) ), \quad x \in \R^n.
  \end{align*}
  if and only if
  $v^{n,T-h}(t,x) := (\phi^{n,T-h})^{-1}(t,x,u^n(t,x))$ is a solution to
  \begin{align*}
    &\partial_t v^{n,T-h}(t,x) + \frac{1}{2} \Tr[ \sigma(t,x) \sigma(t,x)^T D^2 v^{n,T-h}(t,x) ] + \langle b(t,x), Dv^{n,T-h}(t,x) \rangle \notag \\
    &\qquad + \tilde{f}^{n,T-h}(t,x, v^{n,T-h}(t,x), \sigma(t,x) D v^{n,T-h}(t,x)) = 0, \quad t \in (0,T-h), x \in \R^n, \\
    &v^{n,T-h}(T,x) = \phi^n(T-h,x,v^n(T-h,x)), \quad x \in \R^n,
  \end{align*}
  where of course $\tilde{f}^{n,T-h}$ is defined as $\tilde{f}^n$ was, with $\phi^n$ replaced by $\phi^{n,T-h}$.

  Now we have already shown that the terminal values of these PDEs converge, e.g.
  \begin{align*}
    \phi^n( T-h, \cdot, v^n(T-h,\cdot) ) \to \phi( T-h, \cdot, v(T-h,\cdot) ), \text{ locally uniformly}.
  \end{align*}
  As before, one also shows that $\tilde{f}^{n,T-h} \to \tilde{f}^{0,T-h}$, locally uniformly.
  By Theorem \ref{thmComparisonForPDE} we again get comparison, now on $[T-2h,T-h]$,
  and hence again via the method of semi-relaxed limits we arrive at
  \footnote
  {
    Lemma 6.1 in \cite{bibCrandallIshiiLions} does not take into account converging terminal values.
    But the result is immediate: the relaxed limit is a sub resp. super solution by Lemma 6.1 and
    their terminal value is exactly the limit of the given converging terminal values.
  }
  \begin{align*}
    v^{n,T-h} \to v^{0,T-h} \quad \text{ locally uniformly on } [T-2h,T-h] \times \R^n.
  \end{align*}
  Hence $u^n \to u^0$ locally uniformly on $[T-2h,T-h]$, where
  $u^0(t,x) = \phi^{0,T-h}(t,x,v^{0,T-h}(t,x))$.
  Iterating this argument up to time $0$ we get
  \begin{align*}
    u^n \to u^0 \quad \text{ locally uniformly on } [0,T] \times \R^n,
  \end{align*}
  where $u^0$ is defined on intervals of length $h$ as above.

  2. \textbf{Uniqueness, Continuity of solution map}

  Uniqueness of the limit and continuity of the solution map now follow by the same arguments as
  in the proof of Theorem \ref{thmBSDEConvergence}, adapted to the PDE setting.
\end{proof}

\section{ Connection To BDSDEs }
\label{sectConnectiontoBDSDEs}
Let $\Omega^1 = C([0,T], \R^d)$, $\Omega^2 = C([0,T], \R^m)$, with the respective Wiener measures $\P^1$, $\P^2$ on them.
Let $\Omega = \Omega^1 \times \Omega^2$, with the product measure $\P := \P^1 \otimes \P^2$.
For $(\omega^1,\omega^2) \in \Omega$ let $B(\omega^1,\omega^2) = \omega^1$ be the coordinate mapping with respect to the first component.
Analogously $W(\omega^1,\omega^2) = \omega^2$ is the coordinate mapping with respect to the second component.
In particular, $B$ is a $d$-dimensional Brownian motion and $W$ is an independent $m$-dimensional Brownian motion. 

Define $\F_t := \F_{t,T}^B \vee \F_{0,t}^W$, where $\F_{t,T}^B := \sigma( B_r : r \in [t,T] ), \F_{0,t}^W := \sigma( W_r : r \in [0,t])$.
Note that $\F$ is not a filtration, since it is neither increasing nor decreasing.
In this setting, Pardoux and Peng \cite{bibPardouxPengBDSDEs} considered backward doubly stochastic differential equations (BDSDEs).
An $\F$-adapted process $(Y,Z)$ is called a solution to the BDSDE
\begin{align}
  \label{eqBDSDE}
  Y_t = \xi + \int_t^T f(r,Y_r,Z_r)dr + \int_t^T H(X_r, Y_r) \circ dB_r - \int_t^T Z_r dW_r,
\end{align}
if $\E[ \sup_{t\le T} |Y_t|^2 ] < \infty$, $\E[ \int_0^T |Z_r|^2 dr ] < \infty$ and $(Y,Z)$ satisfies $\P$-a.s. \eqref{eqBDSDE} for $t\le T$.

Under appropriate (essentially Lipschitz) conditions on $f$ and $H$ they were able to show existence and uniqueness of a solution.
\footnote
{
  Pardoux and Peng considered equations, where the Stratonovich integral was actually a backward integral.
  But if $H$ is smooth enough, the formulations are equivalent.
  See also Section 4 in \cite{bibBuckdahnMaI}.
}

The connection to BSDEs with rough driver is given by the following
\begin{theorem}
  Let $p \in (2,3), \gamma > p$.
  Let $\xi \in L^{\infty}(\F_T)$.
  Let $f$ be a random function satisfying (F1) and (F2).
  Moreover, assume (A1), (A2) and ($H_{p,\gamma}$).

  Then by Theorem 1.1 in \cite{bibPardouxPengBDSDEs} there exists a unique solution $(Y,Z)$ to the BDSDE
  \begin{align*}
    Y_t = \xi + \int_t^T f(r,Y_r,Z_r)dr + \int_t^T H(X_r, Y_r) \circ dB_r - \int_t^T Z_r dW_r.
  \end{align*}

  Let $\mathbf{B}_t = \exp( B_t + A_t )$ be the Enhanced Brownian motion (over $B$)
  \footnote{$\mathbf{B}$ is precisely $d$-dimensional Brownian motion enhanced with its iterated integrals in Stratonovich sense; it is in $1-1$ correspondence with Brownian motion enhanced with L\'evy's area; $\exp$ denotes the exponential map from the Lie algebra $\R^d \oplus so(d)$ to the group, realized inside the truncated tensor algebra. See e.g. section 13 in \cite{bibFrizVictoir} for more details.},
  especially $\mathbf{B} \in C^{\pvar}_0( [0,T], G^{2}(\R^d))$ $\P^1\ a.s.$. By setting $\mathbf{B} = 0$ on a null set,
  we get $\mathbf{B} \in C^{\pvar}_0( [0,T], G^{2}(\R^d))$.
  By Theorem \ref{thmBSDEConvergence} we can, for every $\omega^1 \in \Omega^1$,
  construct the solution to the BSDE with rough driver
  \begin{align*}
    Y^{rp}(\omega^1,\cdot)_t
    &= \xi(\cdot) + \int_t^T f(r, Y^{rp}_r, Z^{rp}_r)dr + \int_t^T H(X_r, Y^{rp}(\omega^1,\cdot)) d\mathbf{B}_r(\omega^1) \\
    &\quad- \int_t^T Z^{rp}(\omega^1,\cdot) dW_r(\cdot), \quad t \in [0,T].
  \end{align*}

  We then have for $\P^1-a.e.\ \omega^1$ that $\P^2-a.s.$
  \begin{align*}
    \tilde{Y}_t(\omega^1,\cdot) = \tilde{Y}^{rp}_t(\omega^1,\cdot), \quad t\le T
  \end{align*}
  and  
  \begin{align*}
    &Z_t(\omega^1,\cdot) = Z^{rp}_t(\omega^1,\cdot), \quad dt \otimes \P^2 a.s..
  \end{align*}
\end{theorem}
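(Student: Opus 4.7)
The guiding idea is to reduce both sides of the claimed identity to \emph{the same} standard (non-doubly-stochastic) BSDE via Doss--Sussman type transformations. On the rough path side this is exactly what was done in Lemma \ref{lemBSDETransformation} and exploited in the proof of Theorem \ref{thmBSDEConvergence}: for fixed $\omega^1$, the solution $(Y^{rp},Z^{rp})(\omega^1,\cdot)$ is obtained by solving, on each short subinterval, a standard quadratic BSDE with driver $\tilde f$ built from the flow $\phi(\omega^1;\cdot)$ of the \emph{rough} differential equation driven by $\mathbf{B}(\omega^1)$. On the BDSDE side, the analogue of this transformation is well known (see the remark after Lemma \ref{lemBSDETransformation}, Buckdahn--Ma \cite{bibBuckdahnMaI}): composing the BDSDE solution $(Y,Z)$ with the inverse of the \emph{Stratonovich} flow generated by $H$ along $B(\omega^1)$ yields, $\P^2$-a.s., a standard BSDE whose driver has exactly the form of $\tilde f$ from Lemma \ref{lemBSDETransformation}. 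The crucial observation is that for $p\in(2,3)$ the Stratonovich flow of the SDE $dy = H(x,y)\circ dB$ and the RDE flow of $dy = H(x,y)\, d\mathbf{B}$ coincide almost surely. This identifies the two transformed BSDEs up to a $\P^1$-null set.

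The concrete steps I would carry out are the following. First, fix a full-measure set $\Omega^1_0\subset\Omega^1$ such that the piecewise-linear approximations $B^n$ (say along dyadic partitions) satisfy $\mathbf{B}^n(\omega^1)\to\mathbf{B}(\omega^1)$ in $p$-variation rough path metric for every $\omega^1\in\Omega^1_0$; this is a classical Wong--Zakai/rough path result. Second, for fixed $\omega^1\in\Omega^1_0$ and fixed $n$, consider on $(\Omega^2,\P^2)$ the BSDE with smooth data $(\xi(\omega^1,\cdot),f,H,B^n(\omega^1))$; it has a unique solution $(Y^n,Z^n)(\omega^1,\cdot)$ by Lemma \ref{lemBSDEExistence}. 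Third, apply Theorem \ref{thmBSDEConvergence} pathwise in $\omega^1$ to obtain
\begin{equation*}
  Y^n(\omega^1,\cdot)\to Y^{rp}(\omega^1,\cdot)\ \text{uniformly},\qquad
  Z^n(\omega^1,\cdot)\to Z^{rp}(\omega^1,\cdot)\ \text{in }H^2_{[0,T]},
\end{equation*}
for every $\omega^1\in\Omega^1_0$.

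Fourth, I would invoke a Wong--Zakai theorem for BDSDEs: since $B^n$ is absolutely continuous, the BSDE with driver $B^n$ may equivalently be read as a BDSDE whose Stratonovich differential is $dB^n$, and as $n\to\infty$ its solution converges (say in $L^2$ on the product space, along a subsequence $\P$-a.s.) to the BDSDE solution $(Y,Z)$ of \eqref{eqBDSDE}. The cleanest way to see this is via the Doss--Sussman transformation: pull the BDSDE back by the inverse of the Stratonovich flow of $H$ along $B$ to obtain a standard quadratic BSDE on $(\Omega^2,\P^2)$ whose driver is exactly the $\tilde f^0$ used in the proof of Theorem \ref{thmBSDEConvergence} (the Stratonovich flow and the RDE flow for $\mathbf{B}$ agree $\P^1$-a.s.); pull the smooth BSDEs back by the ODE flow driven by $B^n$ to obtain the corresponding $\tilde f^n$; and observe that $\tilde f^n\to\tilde f^0$ uniformly on compacta for $\omega^1\in\Omega^1_0$, because the ODE flows of $B^n$ converge to the Stratonovich/RDE flow of $B$ (Lemma \ref{lemConvergenceOfFlows}). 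Stability of the transformed BSDEs (as in the proof of Theorem \ref{thmBSDEConvergence}, using the Kobylanski argument piece by piece) then gives the two convergences on $\Omega_0^1$ and identifies the limits.

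Fifth, combine steps three and four: on $\Omega^1_0$, both $(Y^n,Z^n)(\omega^1,\cdot)$ and a subsequence of it converge, the former to $(Y^{rp},Z^{rp})(\omega^1,\cdot)$ by rough path stability and the latter to $(Y,Z)(\omega^1,\cdot)$ by the BDSDE Wong--Zakai. Uniqueness of limits then forces $Y=Y^{rp}$ and $Z=Z^{rp}$ $\P^2$-a.s., which is exactly the claim. The main obstacle, and the step I would spend the most care on, is making rigorous the pathwise identification of the Stratonovich flow of $H$ along $B(\omega^1)$ with the RDE flow along $\mathbf{B}(\omega^1)$ for $\omega^1$ in a set of full $\P^1$-measure; everything else is a clean combination of Theorem \ref{thmBSDEConvergence} with Lemma \ref{lemConvergenceOfFlows} and standard stability for quadratic BSDEs. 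A secondary care point is measurability of the pathwise-constructed processes in the product variable $\omega^1$, which follows from the continuity of the solution map stated in Theorem \ref{thmBSDEConvergence} and the measurability of $\omega^1\mapsto\mathbf{B}(\omega^1)$.
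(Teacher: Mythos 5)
Your proposal is correct and relies on the same essential ingredients as the paper's proof: the Buckdahn--Ma (Doss--Sussman) transformation that turns the BDSDE into a standard quadratic BSDE for $(\tilde Y,\tilde Z)$, a Fubini argument to fix $\omega^1$ on a full $\P^1$-measure set, the almost-sure identification of the Stratonovich flow of $H$ along $B(\omega^1)$ with the RDE flow along $\mathbf{B}(\omega^1)$, and short-interval comparison (Theorem \ref{thmComparison}) to conclude uniqueness for the transformed quadratic BSDE. The difference is that you interpose an explicit Wong--Zakai approximation by piecewise-linear $B^n$ and argue by stability on both sides --- one side via Theorem \ref{thmBSDEConvergence}, the other via a Wong--Zakai statement for BDSDEs that you immediately reduce back to the Doss--Sussman transform plus Lemma \ref{lemConvergenceOfFlows}. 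The paper skips this approximation layer entirely: it transforms the BDSDE via Buckdahn--Ma, recalls that $(Y^{rp},Z^{rp})$ was constructed precisely by transforming via the RDE flow, observes that the two transformed drivers agree $\P^1$-a.s., and then invokes short-interval uniqueness directly. The paper also handles the fact that the subinterval length $h$ depends on $\|\mathbf{B}(\omega^1)\|_{\pvar}$ (a random variable) by first assuming global comparison, then localizing on $A^k=\{\|\mathbf{B}\|_{\pvar}\le k\}$; your $\omega^1$-wise argument achieves the same thing since $h$ can simply depend on $\omega^1$, but you should spell this out. Overall your route is slightly longer and partially redundant --- once you have transformed both sides and identified the flows almost surely, the approximation by $B^n$ is no longer needed, as the paper's direct comparison shows --- but it is logically sound and hits the same key obstacle (flow identification) that you correctly flagged.
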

\begin{proof}
  As in the proof of Theorem \ref{thmBSDEConvergence},
  in the BDSDE setting, one can transform the integral belonging to the Brownian motion $B$ away.
  In \cite{bibBuckdahnMaI} it was shown, that if we let $\phi$ be the stochastic (Stratonovich) flow
  \begin{align*}
    \phi(\omega^1;t,y) = y + \int_t^T H( \phi(\omega^1;r,y) ) \circ dB_r(\omega^1),
  \end{align*}
  then with $\tilde{Y}_t := \phi^{-1}(t, Y_t), \tilde{Z}_t := \frac{1}{\partial_y \phi(t,Y_t)} Z_t$ we have $\P$-a.s.
  \begin{align}
    \label{eqBSDETransformedStochastic}
    \tilde{Y}_t(\omega^1,\omega^2) = \xi(\omega^2) + \int_t^T \tilde{f}(\omega^1,\omega^2;r,\tilde{Y}_r(\omega^1,\omega^2),\tilde{Z}_r(\omega^1,\omega^2)) dr - \int_t^T \tilde{Z}_r(\omega^1,\omega^2) dW_r(\omega^2), \quad t\le T.
  \end{align}
  Here
  \begin{align*}
    \tilde{f}(\omega^1,\omega^2;t,x,\ty,\tz) &:=
      \frac{1}{\partial_y \phi}
      \Bigl\{
        f\left(\omega^2; t, \phi, \partial_y \phi \tz + \partial_x \phi \sigma_t \right)
        +
        \langle \partial_x \phi, \itob_t \rangle
        +
        \frac{1}{2} \Tr\left[ \partial_{xx} \phi \sigma_t \sigma_t^T \right] \\
        &\qquad \qquad
        +
        \langle \tz, \left( \partial_{xy} \phi \sigma_t \right)^T  \rangle
        +
        \frac{1}{2} \partial_{yy} \phi |\tz|^2
        \Bigr\},
  \end{align*}
  where $\phi$ and its derivatives are always evaluated at $(\omega^1;x,\ty)$.
  Especially, by a Fubini type theorem (e.g. Theorem 3.4.1 in \cite{bibBogachev}), there exists $\Omega^1_0$ with $\P^1(\Omega^1_0) = 1$ such that for $\omega^1 \in \Omega^1_0$ equation \eqref{eqBSDETransformedStochastic} holds true $\P^2\ a.s.$.

  On the other hand we can construct $\omega^1$-wise the rough flow
  \begin{align*}
    \phi^{rp}(\omega^1;t,y) = y + \int_t^T H( \phi^{rp}(\omega^1;r,y) ) d\mathbf{B}_r(\omega^1).
  \end{align*}

  Assume for the moment that we have global comparison,
  so that we can solve the transformed BSDE uniquely, i.e.
  for every $\omega^1 \in \Omega^1$, we have
  $\P^2\ a.s.$
  \begin{align*}
    \tilde{Y}^{rp}_t(\omega^1,\omega^2)
    &= \xi(\omega^2) + \int_t^T \tilde{f}^{rp}(\omega^1;r,\tilde{Y}^{rp}_r(\omega^1,\omega^2),\tilde{Z}^{rp}_r(\omega^1,\omega^2)) dr \\
    &\qquad - \int_t^T \tilde{Z}^{rp}_r(\omega^1,\omega^2) dW_r(\omega^2), \quad t \le T,
  \end{align*}
  where
  \begin{align*}
    \tilde{f}^{rp}(\omega^1,\omega^2;t,x,\ty,\tz) &:=
    \frac{1}{\partial_y \phi^{rp}}
      \Bigl\{
      f\left(\omega^2; t, \phi^{rp}, \partial_y \phi^{rp} \tz + \partial_x \phi^{rp} \sigma_t \right)
        +
        \langle \partial_x \phi^{rp}, \itob_t \rangle
        +
        \frac{1}{2} \Tr\left[ \partial_{xx} \phi^{rp} \sigma_t \sigma_t^T \right] \\
        &\qquad \qquad
        +
        \langle \tz, \left( \partial_{xy} \phi^{rp} \sigma_t \right)^T  \rangle
        +
        \frac{1}{2} \partial_{yy} \phi^{rp} |\tz|^2
        \Bigr\},
  \end{align*}
  where $\phi$ and its derivatives are always evaluated at $(\omega^1;x,\ty)$.
  It is a classical rough path result, that there exists $\Omega^1_1$ with  $\P^1(\Omega^1_1) = 1$ such that for $\omega^1 \in \Omega^1_1$, we have
  \begin{align*}
    \phi^{rp}(\omega^1;\cdot,\cdot) = \phi(\omega^1;\cdot,\cdot).
  \end{align*}
  Combining above results we have for $\omega^1 \in \Omega^1_0 \cap \Omega^1_1$ that $\P^2\ a.s.$
  \begin{align*}
    \tilde{Y}_t(\omega^1,\cdot) = \tilde{Y}^{rp}_t(\omega^1,\cdot), t\le T,
  \end{align*}
  and
  \begin{align*}
    &\tilde{Z}_t(\omega^1,\cdot) = \tilde{Z}^{rp}_t(\omega^1,\cdot), \quad dt \otimes \P^2 a.s..
  \end{align*}

  Now, since comparison does \textit{not} necessarily hold globally, we must argue differently.
  Define $A^k := \{ \omega^1 \in \Omega^1 : ||\mathbf{B}(\omega^1)||_{\pvar} \le k \}$.
  Then on $A^k$ we have for an $h = h(k) > 0$ comparison on $[T-h,T]$, and we argue on subsequent intervals as above.
  Now, since $\P( \cup A^k ) = 1$, we get the desired result.
\end{proof}

\appendix

\section{Comparison for BSDEs}
\begin{appendixDefinition}
  Let $\xi \in L^{\infty}(\F_T)$, $W$ an $m$-dimensional Brownian motion
  and $f$ a predictable function on $\Omega \times \R_+ \times \R \times \R^m$.

  We call an adapted process $(Y, Z, C)$ a \textit{supersolution to the BSDE with data $(\xi,f)$} if
  $Y \in H^{\infty}_{[0,T]}$, $Z \in H^2_{[0,T]}$, $C$ is an adapted right continuous increasing process and
  \begin{align*}
    Y_t = \xi + \int_t^T f(r, Y_r, Z_r) dr - \int_t^T Z_r dW_r + \int_t^T dC_r, \qquad t\le T.
  \end{align*}

  We call $(Y, Z, C)$ a \textit{subsolution to the BSDE with data $(\xi,f)$} if
  $(Y,Z,-C)$ is a supersolution.
\end{appendixDefinition}

The following statement as well as its proof are based on Theorem 2.6 in \cite{bibKobylanski}.

\begin{appendixTheorem}
  \label{thmComparison}
  There exists a (universal) strictly positive function
  $\delta: \R_+^2 \to (0,\infty)$ such that the following statement is true.

  Let $(Y^1, Z^1, C^1)$ be a supersolution to the BSDE with
  data $(\xi^1, f^1)$.
  Let $(Y^2, Z^2, C^2)$ be a subsolution to the BSDE with
  data $(\xi^2, f^2)$.
  Let $M \in \R_+$ be a bound for $Y^1$ and $Y^2$, i.e.
  \begin{align*}
    ||Y^1||_{\infty}, ||Y^2||_{\infty} \le M.
  \end{align*}

  Assume that $\P$-a.s.
  \begin{align*}
    f^1(Y^1_t, Z^1_t) &\le f^2(t, Y^1_t, Z^1_t), \quad \forall t \in [0,T], \\
    \xi^1 &\le \xi^2.
  \end{align*}

  Assume that there exist constants $C > 0, L > 0, K > 0$ such that for $(t,y,z) \in [0,T] \times [-M,M] \times \R^m$ %, $l \in L^1(0,T)$ and $k \in L^2(0,T)$
  \begin{align*}
    &|f^2(t,y,z)| \le L + C |z|^2 \quad \P-a.s., \\
    &|\partial_z f^2(t,y,z)| \le K + C |z| \quad \P-a.s.
  \end{align*}

  Assume that there exists a constant $N > 0$ such that for $(t,y,z) \in [0,T] \times [-M,M] \times \R^m$%function $m \in L^1(0,T)$ such that
  \begin{align}
    \label{eqBoundOnF2y}
    \partial_y f^2(t,y,z) \le N + \delta(C,M) |z|^2 \quad \P-a.s.
  \end{align}

  Then $\P$-a.s.
  \begin{align}
    \label{eqComparison}
    Y^1_t \le Y^2_t, \quad 0 \le t \le T.
  \end{align}
\end{appendixTheorem}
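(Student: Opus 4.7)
The plan is to mimic the proof of Kobylanski's Theorem 2.6, adapting it to the case where the linear coefficient in $\partial_y f^2$ carries an extra $\delta |z|^2$ term. The four-stage strategy is: (a) subtract the super/sub relations and linearize; (b) use the quadratic-growth/$L^\infty$ setup to obtain a uniform BMO bound on the $Z^i$; (c) perform a Girsanov change of measure (justified by the BMO property via Kazamaki) to eliminate the $\bar Z$ term; (d) choose the universal $\delta$ so small that the remaining $\alpha \bar Y$ term can be absorbed by an integrating factor whose integrability is guaranteed by John--Nirenberg for BMO.

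Set $\bar Y := Y^1 - Y^2$, $\bar Z := Z^1 - Z^2$, $\bar\xi := \xi^1 - \xi^2 \le 0$, and $K := C^1 + C^2$. Subtracting the two defining equations gives
\begin{equation*}
\bar Y_t = \bar\xi + \int_t^T [f^1(Y^1_r, Z^1_r) - f^2(r,Y^2_r,Z^2_r)] dr - \int_t^T \bar Z_r dW_r + (K_T - K_t).
\end{equation*}
Using the telescope $f^1(Y^1,Z^1) - f^2(Y^2,Z^2) = [f^1(Y^1,Z^1) - f^2(Y^1,Z^1)] + [f^2(Y^1,Z^1) - f^2(Y^1,Z^2)] + [f^2(Y^1,Z^2) - f^2(Y^2,Z^2)]$, the hypothesis, and Taylor's formula, one produces progressively measurable $\alpha, \beta$ with $f^1(Y^1,Z^1) - f^2(Y^2,Z^2) \le \alpha_r \bar Y_r + \beta_r \bar Z_r$, and the pointwise bounds $\alpha_r \le N + \delta(C,M) |Z^2_r|^2$ and $|\beta_r| \le K + C(|Z^1_r| + |Z^2_r|)$.

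Since $\|Y^i\|_\infty \le M$ and $|f^2| \le L + C|z|^2$, the standard computation with It\^o's formula applied to $e^{\lambda Y^i}$ (for $\lambda = 2C$, so that the quadratic correction cancels the quadratic growth) delivers a BMO bound
\begin{equation*}
\sup_\tau \left\| \mathbb E\left[\int_\tau^T |Z^i_r|^2 dr \, \Big| \, \mathcal F_\tau\right]\right\|_\infty \le \Phi(M, C, L),
\end{equation*}
i.e. $Z^i \cdot W \in \mathrm{BMO}(\mathbb P)$ with a bound $\Phi$ depending only on $M, C, L$. In particular $\beta \cdot W$ is BMO, so by Kazamaki $\mathcal E(\int \beta dW)$ is a true martingale, defining a probability $Q$ under which $\tilde W_t := W_t - \int_0^t \beta_r dr$ is Brownian motion; under $Q$ the BMO property of $Z^2 \cdot \tilde W$ persists with an explicit norm bound, still depending only on $M,C,L$. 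On $Q$ the linearized inequality reads
\begin{equation*}
\bar Y_t \le \bar\xi + \int_t^T \alpha_r \bar Y_r dr - \int_t^T \bar Z_r d\tilde W_r + (K_T - K_t).
\end{equation*}

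Now comes the decisive step. By the John--Nirenberg theorem for BMO martingales, there is a threshold $\delta^\star = \delta^\star(\Phi(M,C,L))$ such that $\mathbb E^Q[\exp(\delta^\star \int_0^T |Z^2_r|^2 dr)] < \infty$; since $L$ enters $\Phi$ only additively (and can be reabsorbed as in Kobylanski's a priori estimates, whose constants depend only on $M$ and the quadratic-growth constant $C$), one may declare the universal function $\delta(C,M) \in (0, \delta^\star)$. With this choice, the integrating factor $\Gamma_t := \exp(-\int_0^t \alpha_r dr)$ satisfies $\mathbb E^Q[\Gamma_T^{-1}] < \infty$; applying It\^o to $\Gamma \bar Y$, using the BMO norm of $\Gamma \bar Z \cdot \tilde W$ to see that the stochastic integral is a true martingale, and taking $Q$-conditional expectations gives the desired sign relation between $\bar Y_t$, the terminal term $\bar\xi$, and the bounded-variation term driven by $dK$, from which \eqref{eqComparison} follows. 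The principal obstacle is step four: making the universality of $\delta$ genuine, i.e. guaranteeing that the John--Nirenberg threshold can be expressed as a function of $(C, M)$ alone, independent of $N$, $K$ and $L$, so that the statement applies uniformly across the localized BSDEs used in the proof of Theorem \ref{thmBSDEConvergence}.
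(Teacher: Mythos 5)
Your proposal takes a genuinely different route from the paper's. The paper follows Kobylanski's own strategy: it performs an explicit exponential change of variables $\gamma_{\lambda,B}$ on the $y$-coordinate, shows by direct calculation that the transformed driver $F^2$ satisfies the structure condition $\partial_{\tilde y}F^2 + A\,|\partial_{\tilde z}F^2|^2 \le G$ for suitable $A,G$, and then invokes Kobylanski's Proposition~2.9. The universality of $\delta(C,M)$ is built in from the start, since $B=6$ is absolute and $\lambda=\lambda(C)$ is fixed by the quadratic constant alone, yielding $\delta(C,M)=\tfrac{1}{72}e^{-2\lambda(C)M}$ with no reference to $L,K,N$. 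You instead linearize the difference $\bar Y=Y^1-Y^2$, use the $L^\infty$ bound to get a BMO estimate on $Z^i\cdot W$, Girsanov (via Kazamaki) to remove the $\beta\bar Z$ term, and John--Nirenberg to make the integrating factor $e^{\int\alpha}$ $Q$-integrable. That is a legitimate and well-known alternative strategy for quadratic BSDE comparison (Briand--Hu / Hu--Imkeller--M\"uller style), and buys you conceptual clarity and independence from Kobylanski's Prop.~2.9.

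However, there is a genuine gap exactly where you flag it, and it cannot be waved away. The a priori BMO norm of $Z^2\cdot W$ obtained from It\^o applied to a convex exponential of $Y^2$ has the form
\begin{equation*}
\sup_\tau \bigl\| \E[\textstyle\int_\tau^T |Z^2_r|^2\,dr \mid \F_\tau] \bigr\|_\infty \le \Phi(M,C) + \Phi'(M,C)\,L\,T,
\end{equation*}
and the $L$-contribution is unavoidable: the cancellation between $\tfrac12\phi''|z|^2$ and $C\phi'|z|^2$ leaves the linear piece $\phi' L$ untouched, so $\Phi$ genuinely grows linearly in $L$. Consequently the John--Nirenberg threshold $\delta^\star$ depends on $L$, and (after the Girsanov change, which further degrades the BMO norm by a factor controlled by $\|\beta\cdot W\|_{\mathrm{BMO}}$, hence by $K$ as well) on $K$. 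Your parenthetical claim that ``$L$ enters $\Phi$ only additively and can be reabsorbed'' is not substantiated and, as far as I can see, is false: there is no way to bound $\E[\int_\tau^T|Z^2|^2\,dr\mid\F_\tau]$ purely in terms of $M$ and $C$ when $f^2$ has an arbitrarily large bounded part. So your proof as written establishes comparison for a threshold $\delta(C,M,L,K)$, not the universal $\delta(C,M)$ asserted in the statement, which is precisely the content needed for the iteration in the proof of Theorem~\ref{thmBSDEConvergence}. The paper sidesteps the whole BMO machinery: in the change-of-variables approach the coefficient of $|\tilde z|^2$ in $\partial_{\tilde y}F^2$ is made strictly negative \emph{algebraically}, with a margin $\delta$ that reads off the transformation parameters $(\lambda(C),B,M)$; the constants $L,K,N$ only influence the global constant $G$, never $\delta$. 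That is the mechanism you would need to import (or find an analogue of) to close the argument.
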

\begin{appendixRemark}
  We note that, as in Theorem 2.6 of \cite{bibKobylanski}, the assumptions could be weakened
  by replacing the constants $L, K, N$
  by deterministic functions $l_t \in L^1(0,T), k_t \in L^2(0,T)$ and $n_t \in L^1(0,T)$.

  In our application of Theorem \ref{thmComparison} in the proof of Theorem \ref{thmBSDEConvergence},
  condition \eqref{eqBoundOnF2y} is not satisfied on $[0,T]$.
  But we are able to choose $h > 0$ small enough, such that it is satisfied on $[T-h,T]$.
  Comparison \eqref{eqComparison} then holds on $[T-h,T]$.
\end{appendixRemark}
\begin{proof}
  1. Let $\lambda > 0, B > 1$ be constants, to be specified later on.
  We begin by constructing several functions, whose good properties we will rely on later in the proof.
  Define
  \begin{align*}
    \gamma(\tilde{y}) := 
    \gamma_{\lambda, B}(\tilde{y}) := 
    \frac{1}{\lambda} \log\left( \frac{ e^{\lambda B \tilde{y}} + 1 }{ B } \right) - M, \quad \tilde{y} \in \R.
  \end{align*}
  
  Then
  \begin{align*}
    \gamma^{-1}(y) = \frac{1}{\lambda B} \log\left( B e^{\lambda (y + M)} - 1 \right), \quad
    \gamma'(\tilde{y}) = B \frac{1}{1 + e^{-\lambda B \tilde{y}}}.
  \end{align*}

  Denote $g(y) := e^{-\lambda (y+M)}$, then $0 < g \le 1, \quad \text{ on } [-M,M]$.
  Define
  \begin{align*}
    w(y) 
    &:= \gamma'(\gamma^{-1}(y))
    % = B \left( 1 + \frac{1}{ B e^{\lambda (y + M) - 1} } \right)^{-1}
    % = B - e^{-\lambda ( y + M )}
    = B - g(y).
  \end{align*}
  Then
  \begin{align*}
    w'(y)
    %&= \lambda e^{ -\lambda ( y + M )}, \\
    &= \lambda g(y), \quad
    w''(y)
    = -\lambda^2 g(y), \\
    \frac{w'(y)}{w(y)}
    &= \frac{ \lambda g(y) }{ B - g(y) }, %= \lambda \left[ \frac{B}{ B - g(y) } - 1 \right],
    \quad
    \frac{w''(y)}{w(y)}
    = \frac{ -\lambda^2 g(y) }{B - g(y)}.
  \end{align*}

  In particular $w > 0$ on $[-M, M]$.

  Define $\alpha(y) := \gamma^{-1}(y)$.
  Then, since $(Y^1, Z^1, C^1)$ is a supersolution to the BSDE with data $(\xi^1, f^1)$, It\^{o} formula gives
  \begin{align*}
    \alpha(Y^1_t) =
    \alpha(Y^1_0)
    - \int_0^t \alpha'(Y^1_r) f^1(r, Y^1_r, Z^1_r) dr
    + \int_0^t \alpha'(Y^1_r) Z^1_r dW_r
    - \int_0^t \alpha'(Y^1_r) dC_r
    + \int_0^t \alpha''(Y^1_r) |Z^1_r|^2 dr.
  \end{align*}

  Define
  \begin{align*}
    \tilde{Y^{1}} := \alpha(Y^{1}), \quad
    \tilde{Z^{1}} := \frac{Z^{1}}{\gamma'(\tilde{Y}^{1})} = \frac{Z^{1}}{w(Y^{1})}.
  \end{align*}
  and
  \begin{align*}
    F^{1}(t, \tilde{y}, \tilde{z}) := \frac{1}{\gamma'(\tilde{y})} \left[ f^{1}(t, \gamma(\tilde{y}), \gamma'(\tilde{y}) \tilde{z}) + \frac{1}{2} \gamma''(\tilde{y}) |\tilde{z}|^2 \right].
  \end{align*}

  Since $\alpha' > 0$
  we have that $(\tilde{Y}^1, \tilde{Z}^1, \int_0^{\cdot} \alpha'(Y^1_r) dC^1_r)$ is a supersolution
  to the BSDE with data $(\alpha(\xi^1), F^1)$.
  Analogously
  we have that $(\tilde{Y}^2, \tilde{Z}^2, \int_0^{\cdot} \alpha'(Y^2_r) dC^2_r)$ is a subsolution
  to the BSDE with data $(\alpha(\xi^2), F^2)$.
  Since $\alpha$ is increasing, it is now enough to verify that $\tilde{Y}^1 \le \tilde{Y}^2$.

  For that we will verify, that $F^2$ satisfies the conditions of Proposition 2.9 in \cite{bibKobylanski}.
  Especially we will show, that there exists a constant $G > 0$ such that
  \begin{align}
    \label{eqStructureCondition}
    \partial_y f^2(t,y,z) + A |\partial_z f^2(t,y,z)|^2 \le G, \quad \forall (t,y,z) \in [0,T]\times\R\times\R^m.
  \end{align}

  For simplicity denote $F := F^2, f := f^2$.
  Denote $y = \gamma( \tilde{y} ),\ z = \gamma'(\tilde{y}) \tilde{z} = w(y) \tilde{z}$.
  For convenience $w$ and its derivatives will always be evaluated at $y$.
  Then
  \begin{align*}
    \partial_{\tilde{z}} F(t, \tilde{y}, \tilde{z})
    &= \partial_z f(t, y, z) + z \frac{w'}{w}, \\
    \partial_{\tilde{y}} F(t, \tilde{y}, \tilde{z})
    &= \frac{1}{w} \left[ \frac{1}{2} w'' |z|^2 + w' \left( \partial_z f(t,y,z) z - f(t,y,z) \right) \right] + \partial_y f(t,y,z).
  \end{align*}
  
  Hence
  \begin{align*}
    \partial_{\tilde{y}} F(t, \tilde{y}, \tilde{z})
    &\le
    \frac{1}{w} \left[ \frac{1}{2} w'' |z|^2 + w' \bigl( |z| [ K + C |z| ] + L + C|z|^2 \bigr) \right] + \partial_y f(t,y,z)
  \end{align*}
  and
  \begin{align*}
    |\partial_{\tilde{z}} F(t, \tilde{y}, \tilde{z})|^2
    \le \left[ K + C |z| + \frac{w'}{w} |z| \right]^2.
  \end{align*}

  So, for $A > 0$
  \begin{align*}
    (\partial_{\tilde{y}} F + A |\partial_{\tilde{z}} F|^2)(t,\tilde{y}, \tilde{z})
    &\le
    |z|^2 \left[ \frac{1}{2} \frac{w''}{w} + \frac{w'}{w} 2 C + A (C + \frac{w'}{w})^2 \right]
    +
    K |z| \left[ \frac{w'}{w} + 2 A \left( C + \frac{w'}{w} \right) \right] \\
    &\qquad
    +
    \frac{w'}{w} L
    +
    |\partial_y f(t,y,z)|
    +
    A K^2.
  \end{align*}

  Note, that for the second term we have
  \begin{align*}
    K |z| \left[ \frac{w'}{w} + 2 A \left( C + \frac{w'}{w}\right) \right]
    &\le
    K |z| \left[ (1 + 2 A) \left( C + \frac{w'}{w}\right) \right] \\
    &\le
    A \left( C + \frac{w'}{w}\right)^2 |z|^2 +
    \frac{(1 + 2 A)^2}{A} K^2.
  \end{align*}

  Hence
  \begin{align*}
    (\partial_{\tilde{y}} F + A |\partial_{\tilde{z}} F|^2)(t,\tilde{y}, \tilde{z})
    &\le
    |z|^2
    \left[ \frac{1}{2} \frac{w''}{w} + \frac{w'}{w} 2 C + 2 A (C + \frac{w'}{w})^2 \right] \\
    &\quad
    +
    \frac{w'}{w} L
    +
    |\partial_y f(t,y,z)|
    +
    \left( A + \frac{ (1 + 2A)^2 }{ A } \right) K^2.
  \end{align*}

  Now
  \begin{align*}
    \frac{1}{2} \frac{w''}{w} + \frac{w'}{w} 2 C + 2 A ( C + \frac{w'}{w} )^2
    &=
    \frac{1}{2} \frac{w''}{w} + \frac{w'}{w} 2 C + 2 A C^2 + 4 A C \frac{w'}{w} + 2 A ( \frac{w'}{w} )^2 \\
    &= 
    -\frac{\lambda^2}{2} \frac{ g(y) }{ B - g(y) }
    +
    2 C (1 + 2 A) \frac{ \lambda g(y) }{ B - g(y) } % \\
    %&\quad
    +
    2 A C^2 + 2 A \frac{ \lambda^2 g(y)^2 }{ (B - g(y))^2 } \\
    &=
    \frac{g(y)}{ (B - g(y))^2 } \Bigl[ - \frac{ \lambda^2 }{2} ( B - g(y) ) + 2 C (1+2A) \lambda (B-g(y)) \\
    &\qquad \qquad
    +
    2 A \lambda^2 g(y) \Bigr] + 2 A C^2 \\
    &=
    \frac{g(y)}{ (B - g(y))^2 } \left[ \frac{ \lambda^2 }{2} ( (1 + 4 A) g(y) - B ) + 2 C (1+2A) \lambda (B-g(y)) \right] + 2 A C^2.
  \end{align*}

  For all $A < 1$ we hence have
  \begin{align*}
    \frac{1}{2} \frac{w''}{w} + \frac{w'}{w} 2 C + 2 A ( C + \frac{w'}{w} )^2
    &\le
    \frac{g(y)}{ (B - g(y))^2 } \left[ \frac{ \lambda^2 }{2} ( 5 g(y) - B ) + 2 C 3 \lambda (B-g(y)) \right] + 2 A C^2.
  \end{align*}
  Now, choose
  $B = 6$. Hence $5 g(y) - B \le -1,\ y \in [-M,M]$.
  Then choose $\lambda = \lambda(C)$ sufficiently large such that the term in square brackets is strictly negative, say
  smaller then $-1$ for all $y \in [-M,M]$.
  This is possible since it is a polynomial in $\lambda$ and the leading power has a negative coefficient.
  Then for $y \in [-M,M]$
  \begin{align*}
    \frac{g(y)}{ (6 - g(y))^2 } \left[ \frac{ \lambda^2 }{2} ( 5 g(y) - 6 ) + 2 C 3 \lambda (6-g(y)) \right]
    &\le
    - \frac{g(y)}{ (6 - g(y))^2 } \\
    &\le
    - \frac{1}{36} e^{-\lambda 2 M} =: - 2 \delta,
  \end{align*}
  where $\delta$ depends only $M$ and $\lambda$ and hence only on $M$ and $C$, i.e.
  \begin{align*}
    \delta = \delta(C,M) = \frac{1}{72} e^{-\lambda(C) 2 M}.
  \end{align*}
  Now choose $A$ small enough such that $2 A C^2 < \delta$.
  If then for some $N>0$ we have
  \begin{align*}
    \partial_y f(t,y,z) \le N + \delta(C,M) |z|^2,
  \end{align*}
  it follows that
  \begin{align*}
    (\partial_{\tilde{y}} F + A |\partial_{\tilde{z}} F|^2)(t,\tilde{y}, \tilde{z})
    &\le
    \frac{w'}{w} L + N + \left( A + \frac{ (1 + 2A)^2 }{ A } \right) K^2 \\
    &\le
    \frac{\lambda}{B-1} L + N + \left( A + \frac{ (1 + 2A)^2 }{ A } \right) K^2 \\
    &=: G.
  \end{align*}

  So we have shown \eqref{eqStructureCondition} and comparison the follows from Proposition 2.9  in \cite{bibKobylanski}.
\end{proof}

\section{Flow properties}

Consider the solution flow $\phi$ to
\begin{align}
  \label{eqFormalFlow}
  \phi(t,x,y) = y + \int_t^T H(x, \phi(r,x,y)) d\rp_r,
\end{align}
where $H$ and $\rp$ will be specified in a moment.
We need to control
\[
  \partial_{y} \phi - 1, \partial_{x} \phi, \partial_{xx} \phi,
  \partial_{xy}\phi, \partial_{yy} \phi, \partial_{yyy} \phi, \partial_{xyy}\phi, \partial_{xxy} \phi
\]%
over a small interval $\left[ T-h,T\right] $. Note that each of the above expressions is $0$ when evaluated at $t=T$. 

\bigskip

\begin{appendixLemma}
  \label{lemFlowEstimates}
Let $p \ge 1$, $\mathbf{\rp} \in C^{\pvar}( [0,T], G^{[p]}(\R^d) )$ and $\gamma > p$.
Assume that \thinspace $H_{i}=H_{i}\left( x,y\right) $
has joint regularity of the form%
\[
\sup_{i=1,\dots ,d}\,\left\vert H_{i}\left( \cdot ,\cdot \right) \right\vert
_{\Lip^{\gamma +2}\left( R^{n+1}\right) }\leq c_{1}
\]%
and%
\[
\left\Vert \mathbf{\zeta }\right\Vert _{p\text{-var;}\left[ 0,T\right] }\leq
c_{2}.
\]%

Then, the solution to \eqref{eqFormalFlow} induces a flow of $C^3$ diffeomorphisms, parametrized by $x\in \R^n$, and 
there exists a positive $L = L \left( c_{1},c_{2}, T \right) $ so that, uniformly over $x\in R^{n},y\in R$ and $%
t\in \left[ 0 , T \right] $%
\[
\max
\left\{
  \partial_{x}\phi,
  \partial_{y} \phi,
  \frac{1}{ \partial_{y} \phi },
  \partial_{xx} \phi,
  \partial_{xy}\phi,
  \partial_{yy}\phi,
  \partial_{yyy} \phi,
  \partial_{xyy} \phi,
  \partial_{xxy} \phi
\right\} < L.
\]

Moreover, for every $\varepsilon >0$ there exists a positive $\delta =\delta
\left( c_{1},c_{2}\right) $ so that, uniformly over $x\in R^{n},y\in R$ and $%
t\in \left[ T-\delta , T \right] $%
\[
\max
\left\{
  \partial_{x} \phi,
  \partial_{y} \phi - 1,
  \partial_{xx} \phi,
  \partial_{xy}\phi,
  \partial_{yy} \phi,
  \partial_{yyy} \phi,
  \partial_{xyy}\phi,
  \partial_{xxy} \phi
\right\} <\varepsilon .
\]
\end{appendixLemma}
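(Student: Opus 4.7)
The plan is to view \eqref{eqFormalFlow} as a family (indexed by $x$) of rough differential equations in the $y$ variable, driven by $\mathbf{\rp}$, and then to differentiate with respect to $x$ and $y$ to obtain a hierarchy of linear RDEs for the derivative processes. Since $H_i \in \Lip^{\gamma+2}$ with $\gamma > p$, classical rough path theory (e.g.\ the flow theorems in Chapter 11 of \cite{bibFrizVictoir}) guarantees that \eqref{eqFormalFlow} induces a flow of $C^{\lfloor\gamma+2\rfloor-1}\supset C^{3}$ diffeomorphisms in $y$, parametrized smoothly in $x$, with all derivatives up to order three locally bounded in terms of $|H|_{\Lip^{\gamma+2}}$ and $\|\mathbf{\rp}\|_{\pvar;[0,T]}$.

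For the first (uniform) bound, I would write down the variational equations. Setting $J_t := \partial_y \phi(t,x,y)$ one has the linear RDE
\begin{equation*}
  J_t = 1 + \int_t^T (\partial_y H)(x,\phi(r,x,y))\, J_r \, d\mathbf{\rp}_r,
\end{equation*}
and similarly for $\partial_x \phi$, $\partial_{xx}\phi$, $\partial_{xy}\phi$, $\partial_{yy}\phi$; the third-order derivatives $\partial_{yyy}\phi$, $\partial_{xyy}\phi$, $\partial_{xxy}\phi$ satisfy analogous linear RDEs whose inhomogeneities are polynomial expressions in the lower-order derivatives and in $H$ together with derivatives up to order three. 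Each such linear equation admits a universal $p$-variation estimate (e.g.\ Davie-type or Lyons' continuity theorem applied to the lifted system); iterating over the eight quantities produces a bound $L=L(c_1,c_2,T)$ uniform in $(x,y)\in\R^{n+1}$. The bound on $1/\partial_y\phi$ follows from the same argument applied to the reverse flow $\psi=\phi^{-1}$, which, by the identity $\partial_y\phi\cdot\partial_y\psi=1$, is just the reciprocal of $J$.

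For the second (smallness) claim, the key observation is that each of the listed quantities vanishes (resp.\ equals one, for $\partial_y\phi$) at $t=T$, because $\phi(T,x,y)=y$ implies $\partial_y\phi(T,\cdot)=1$ and all other derivatives vanish. Since the variational equations are of the form $dF = A(t) F\, d\mathbf{\rp}_t + (\text{source})\, d\mathbf{\rp}_t$ with bounded coefficients $A$, a standard rough-path Gronwall argument gives
\begin{equation*}
  \sup_{t\in[T-h,T]} |F_t - F_T| \;\le\; C(c_1,c_2)\,\omega(T-h,T)^{1/p}\exp\!\left(C(c_1,c_2)\,\omega(T-h,T)^{1/p}\right),
\end{equation*}
where $\omega$ is the control $\|\mathbf{\rp}\|^p_{\pvar;[\cdot,\cdot]}$. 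Continuity of $\omega$ (superadditivity plus finiteness of $\omega(0,T)$) implies $\omega(T-h,T)\to 0$ as $h\downarrow 0$, uniformly in the base point. Choosing $\delta=\delta(c_1,c_2)$ so small that the right-hand side is below $\varepsilon$ yields the asserted estimate on $[T-\delta,T]$.

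The main obstacle is bookkeeping rather than conceptual: the third-order derivatives $\partial_{yyy}\phi$, $\partial_{xyy}\phi$, $\partial_{xxy}\phi$ satisfy linear RDEs whose source terms involve products of first- and second-order derivatives, so the estimates must be obtained in a prescribed order (first $J$ and $\partial_x\phi$, then the second derivatives, then the third derivatives), with the universal constants compounding at each step; care is required to ensure that all constants depend only on $c_1$, $c_2$, $T$, and (for the smallness statement) not on $T$ via the interval length, only via $\|\mathbf{\rp}\|_{\pvar;[T-\delta,T]}$.
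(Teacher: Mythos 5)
Your argument is correct and leads to the stated conclusions, but it takes a genuinely different route from the paper's. The paper bundles the entire collection $\hat\phi = (\xi,\phi,\partial_x\phi,\partial_y\phi,\ldots,\partial_{xxy}\phi)$ into one extended RDE system (with the trivial equation $d\xi=0$ carrying the parameter $x$), then applies a \emph{single} RDE a priori estimate (Thm.~10.14 of \cite{bibFrizVictoir}) to this ensemble. To make this work uniformly over the terminal condition $(x,y)$, the paper uses the translation trick $H_{x,y}(\cdot,\cdot)=H(x+\cdot,y+\cdot)$, so that one may always solve with terminal data $(0,0)$, noting that $\Lip$-norms are translation invariant. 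It also has to address a genuine subtlety: the bundled vector fields for the derivative components are only $\Lip^{\gamma-1}_{loc}$ (the source terms are polynomial in the lower-order derivatives), so one must invoke non-explosivity, localize, and use the special triangular structure of the system (see the paper's footnote citing Ch.~11 of \cite{bibFrizVictoir}). Your proposal instead treats $x$ as a parameter from the start and attacks the derivative processes as an iterated hierarchy of \emph{linear} RDEs, each with bounded coefficients $\partial^k H(x,\phi)$ and bounded, previously-estimated source terms. This has the advantage that uniformity in $(x,y)$ falls out automatically from the uniform bounds on $\partial^k H$ and the fixed terminal data $(0,1,0,\ldots)$, without any centering argument; and it avoids the $\Lip_{loc}$/localization discussion, since each equation in the hierarchy is globally linear with bounded coefficients. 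The price is the bookkeeping you acknowledge (eight quantities, nested dependence of constants), whereas the paper's bundled estimate is a one-liner once the localization issue is dispatched. Two small remarks: the precise form of the linear-RDE estimate you quote (exponential in $\omega^{1/p}$ rather than in $\omega$) should be checked against the reference you intend to use, though this does not affect the argument; and for the smallness claim, the uniformity over $(x,y)$ needs to be stated explicitly (it again follows because the coefficients are bounded by $c_1$ independently of $(x,y)$ and the terminal values are constant).
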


\begin{proof}
Consider the extended RDE%
\begin{eqnarray*}
d\xi  &=&0 \\
-d\phi  &=&H\left( \xi ,\phi \right) d\zeta 
\end{eqnarray*}%
with terminal data $\left( \xi _{T},\phi _{T}\right) =\left( x,y\right) $.
The assumption on $\left( H_{i}\right) $ implies that $\left( \xi ,\phi
\right) $ evolves according to a rough differential equation with $%
\Lip^{\gamma +2}$-vector fields. In this case, the ensemble%
\[
\hat{\phi}
=
\left(
  \xi,
  \phi,
  \partial_{x}\phi,
  \partial_{y}\phi,
  \partial_{xx}\phi,
  \partial_{xy}\phi,
  \partial_{yy}\phi,
  \partial_{yyy}\phi,
  \partial_{xyy}\phi,
  \partial_{xxy}\phi
\right) 
\]%
can be seen to be the (unique\footnote{%
This is actual a subtle point since uniquess in general requires $%
\Lip_{loc}^{\gamma }$-regularity. The point is that the RDEs obtain by
differentiating the flow have a special structure so that for the final
level of derivatives only rough integration is need; as is well known, for
this it suffices to have $\Lip_{loc}^{\gamma -1}$ regularity. (cf. Chapter 11 in \cite{bibFrizVictoir})
contains a detailed discussion of this.}, non-explosive) solution
to an RDE along $\Lip_{loc}^{\gamma -1}$ vector fields. Thanks to
non-explosivity we can, for fixed terminal data%
\[
\hat{\phi}_{T}=
\left(
  x,
  y,
  0,
  1,
  0,
  0,
  0,
  0,
  0,
  0
\right),
\]
localize the problem and assume without loss of generality that the above
ensemble is driven along $\Lip^{\gamma -1}$ vector fields. Since we want
estimates that are \textit{uniform} in $x,y$ we make another key
observations: there is no loss of generality in taking $\left( x,y\right)
=\left( 0,0\right) $ provided $H$ is replaced by $H_{x,y}=H\left( x+\cdot
,y+\cdot \right) $. This also shifts the derivaties (evaluated at some $%
\left( x,y\right) $) to derivatives evaluated at $\left( 0,0\right) $. As
announced, we can now safely localize, and assume that the vector fields
required for $\hat{\phi},$ obtain by taking formal $\left( x,y\right) $
derivatives in%
\begin{eqnarray*}
d\xi  &=&0 \\
-d\phi  &=&H\left( \xi ,\phi \right) d\zeta ,
\end{eqnarray*}%
are globally $\Lip^{\gamma -1}$. A basic estimate (Thm 10.14 in \cite{bibFrizVictoir}) for RDE
solutions implies that for some $C=C\left( p,\gamma \right) $%
\[
\left\vert \hat{\phi}_{t}-\hat{\phi}_{T}\right\vert \leq \left\vert \hat{\phi%
}\right\vert _{p\text{-var;}\left[ t,T\right] }=C\times \varphi _{p}\left(
\left\vert H_{x,y}\right\vert _{\Lip^{\gamma +2}}\left\Vert \mathbf{\zeta }%
\right\Vert _{p\text{-var;}\left[ T-h,T\right] }\right),
\]%
where $\varphi _{p}\left( x\right) =\max (x,x^{p}\,)$.At last, we note that $%
\left\vert H_{x,y}\right\vert _{\Lip^{\gamma +2}}=\left\vert H\right\vert
_{\Lip^{\gamma +2}}$ thanks to invariance of such Lip norms under
translation. The proof is then easily finished.
\end{proof}

\begin{appendixLemma}
  \label{lemConvergenceOfFlows}
  Assume the setting of the previous lemma. Assume that $\rp^n, n\ge 1$
  is a sequence of $p$ rough paths that converge to a rough path $\rp^0$ in $p$-variation.

  Then locally uniformly on $[0,T] \times \R^n \times \R$
  \begin{align*}
    (\phi^n, \frac{1}{\partial_y \phi^n}, \partial_y \phi^n, \partial_{yy} \phi^n, \partial_x \phi^n, \partial_{xx} \phi^n, \partial_{yx} \phi^n)
    \to
   (\phi^0, \frac{1}{\partial_y \phi^0}, \partial_y \phi^0, \partial_{yy} \phi^0, \partial_x \phi^0, \partial_{xx} \phi^0, \partial_{yx} \phi^0)
  \end{align*}
\end{appendixLemma}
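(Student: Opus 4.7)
The plan is to reduce the statement to rough-path continuity of RDE solutions applied to the extended system already considered in the proof of Lemma \ref{lemFlowEstimates}, and then to upgrade pointwise convergence to local uniform convergence using the uniform regularity bounds that lemma provides.

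First, I would assemble the flow together with all spatial derivatives appearing in the statement (and a few more, up to order three, as in Lemma \ref{lemFlowEstimates}) into a single augmented state
\[
  \hat{\phi}=\left(\xi,\phi,\partial_{x}\phi,\partial_{y}\phi,\partial_{xx}\phi,\partial_{xy}\phi,\partial_{yy}\phi,\partial_{yyy}\phi,\partial_{xyy}\phi,\partial_{xxy}\phi\right),
\]
which, by differentiating \eqref{eqFormalFlow}, solves an RDE driven by $\mathbf{\rp}$ with $\Lip^{\gamma-1}$ vector fields and terminal data $(x,y,0,1,0,0,0,0,0,0)$. Since $\mathbf{\rp}^{n}\to\mathbf{\rp}^{0}$ in $p$-variation, we have $\sup_{n}\|\mathbf{\rp}^{n}\|_{\pvar;[0,T]}<\infty$, so Lemma \ref{lemFlowEstimates} gives uniform bounds on every entry of $\hat{\phi}^{n}$, uniformly in $n$, $t\in[0,T]$ and $(x,y)$ in compacts.

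Second, I would invoke the standard continuity theorem for RDE solutions along $\Lip^{\gamma-1}$ vector fields in $p$-variation topology (e.g.\ Theorem 10.26 in \cite{bibFrizVictoir}): for each fixed terminal data $(x,y)$,
\[
  \sup_{t\in[0,T]}\bigl|\hat{\phi}^{n}_{t}(x,y)-\hat{\phi}^{0}_{t}(x,y)\bigr|\longrightarrow 0.
\]
This yields pointwise (in $(t,x,y)$) convergence of $\phi^{n}$ and of each listed derivative to the limit. To promote this to local uniform convergence, I would combine this pointwise statement with equicontinuity in $(t,x,y)$: equicontinuity in $t$ follows from the uniform $p$-variation (hence $1/p$-H\"older) bound on $\hat\phi^n$, while equicontinuity in $(x,y)$ follows because every entry of $\hat\phi^n$ that we need to control is itself (spatially) Lipschitz with a constant bounded by a higher-order entry of $\hat\phi^n$ (e.g.\ $\partial_{x}\phi^n$ has $(x,y)$-Lipschitz constant controlled by $\partial_{xx}\phi^n,\partial_{xy}\phi^n$, and $\partial_{yy}\phi^n$ by $\partial_{xyy}\phi^n,\partial_{yyy}\phi^n$), all of which are uniformly bounded by Lemma \ref{lemFlowEstimates}. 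An Arzel\`a--Ascoli argument then gives local uniform convergence of each of $\phi^{n},\partial_{y}\phi^{n},\partial_{yy}\phi^{n},\partial_{x}\phi^{n},\partial_{xx}\phi^{n},\partial_{yx}\phi^{n}$.

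Third, for the reciprocal term $1/\partial_{y}\phi^{n}$: since $\phi^{0}(t,x,\cdot)$ is a $C^{3}$-diffeomorphism with uniform control of its $y$-inverse on compacts (Lemma \ref{lemFlowEstimates} applied also to $\psi^{0}$ via the identities $\partial_{y}\psi=1/\partial_{y}\phi$ etc.), $\partial_{y}\phi^{0}$ is bounded below by a strictly positive constant on any compact subset of $[0,T]\times\R^{n}\times\R$. The just-established local uniform convergence $\partial_{y}\phi^{n}\to\partial_{y}\phi^{0}$ then guarantees, for $n$ large, a uniform positive lower bound on $\partial_{y}\phi^{n}$ over the same compact, and the continuous mapping theorem gives $1/\partial_{y}\phi^{n}\to 1/\partial_{y}\phi^{0}$ locally uniformly.

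The main obstacle I anticipate is the parameter-uniformity in $(x,y)$: the cited continuity theorem is formulated for a fixed initial/terminal condition, so the step that does real work is the equicontinuity-plus-Arzel\`a--Ascoli passage, which relies crucially on the $(x,y)$-derivative bounds from Lemma \ref{lemFlowEstimates} being \emph{uniform} in both $n$ and the starting point, a property that was itself obtained via the translation-invariance argument in that lemma.
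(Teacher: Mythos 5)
Your route matches the paper's in spirit: both enlarge the state space exactly as in Lemma \ref{lemFlowEstimates} and then appeal to rough-path continuity of the It\^o--Lyons map. The paper cites Theorems 11.14--11.15 of \cite{bibFrizVictoir}, which already deliver locally uniform convergence of the flow and its derivatives \emph{jointly} in the starting point and the driving signal, so no Arzel\`a--Ascoli step is needed; you instead rebuild this from a fixed-terminal-condition continuity theorem plus equicontinuity, which is valid but more circuitous.

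Two details of that reconstruction need repair. First, the parenthetical claim that a uniform $p$-variation bound on $\hat{\phi}^n$ implies a uniform $1/p$-H\"older bound is false: $p$-variation controls total oscillation, not the modulus of continuity. What does give $t$-equicontinuity is the \emph{convergence} $\mathbf{\rp}^n \to \mathbf{\rp}^0$ in $p$-variation: fix a partition $D$ of $[0,T]$ on which $||\mathbf{\rp}^0||_{\pvar}$ is below a given $\varepsilon$ on each piece; for $n$ large, $||\mathbf{\rp}^n||_{\pvar}$ is then also uniformly small on each piece, and the basic RDE estimate $|\hat{\phi}^n_t - \hat{\phi}^n_s| \le C\, \max\bigl(||\mathbf{\rp}^n||_{\pvar;[s,t]},\, ||\mathbf{\rp}^n||_{\pvar;[s,t]}^p\bigr)$ then supplies the required uniform modulus in $t$. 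Second, the spatial equicontinuity argument for $\partial_{xx}\phi^n$ needs a uniform bound on $\partial_{xxx}\phi^n$, which does not appear in the ensemble $\hat{\phi}$ nor in the list of controlled derivatives in Lemma \ref{lemFlowEstimates}. Either enlarge the ensemble to include $\partial_{xxx}\phi$ (the $\Lip^{\gamma+2}$ hypothesis on $H$ allows this, by the same footnote argument that covers the other third derivatives), or sidestep the issue entirely by quoting the locally uniform form of RDE continuity as the paper does.
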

\begin{proof}
  Using enlargment of the state space as in the proof of Lemma \ref{lemFlowEstimates} we can apply
  the same reasoning as in Theorem 11.14 and Theorem 11.15 in \cite{bibFrizVictoir} to get the desired result.
\end{proof}

\section{Comparison for PDEs I }
Consider the equation
\begin{align}
  \label{eqParabolicEquation}
  &\partial_t u(t,x) + F(t,x,u,Du,D^2u) = 0, \quad (t,x) \in [0,T) \times \R^n \\
  &u(T,x) = g(x), \quad x \in \R^n, \notag
\end{align}
where $F: [0,T] \times \R^n \times \R \times \R^n \times \mathcal{S}(n) \to \R$ is a continuous function
and $g: \R^n \to \R$ is a bounded, continuous function.

\begin{appendixTheorem}
  \label{thmComparisonForParabolic}
  Assume that $-F$ satisfies (3.14) of the User's Guide \cite{bibCrandallIshiiLions}, uniformly in $t$, together with uniform continuity of
  $F = F(t,x,r,p,X)$ whenever $r,p,X$ remain bounded.

  Assume also a (weak form of) properness: there exists $C$ such that
  \begin{align*}
    F(t,x,s,p,X) - F(t,x,r,p,X) \le C (s - r), \qquad \forall r \le s.
  \end{align*}

  If $u$ is a subsolution of \eqref{eqParabolicEquation} and $v$ is a supersolution, then for $(t,x) \in [0,T] \times \R^n$
  \begin{align*}
    u(t,x) \le v(t,x).
  \end{align*}
\end{appendixTheorem}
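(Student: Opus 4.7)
The plan is to invoke the standard parabolic viscosity comparison theorem (e.g.\ Theorem 8.2 of \cite{bibCrandallIshiiLions}) after an exponential rescaling that upgrades the weak-properness assumption into genuine monotonicity in $r$. Concretely, for $\lambda > C$, set $\tilde u(t,x) := e^{-\lambda t} u(t,x)$ and $\tilde v(t,x) := e^{-\lambda t} v(t,x)$. A direct computation shows that these are respectively sub- and supersolution of $\partial_t \tilde u + \tilde F(t,x,\tilde u, D\tilde u, D^2 \tilde u) = 0$ with
\[
\tilde F(t,x,r,p,X) := e^{-\lambda t} F(t,x, e^{\lambda t} r, e^{\lambda t} p, e^{\lambda t} X) - \lambda r.
\]
The weak-properness assumption on $F$ then yields, for all $s \ge r$,
\[
\tilde F(t,x,s,p,X) - \tilde F(t,x,r,p,X) \le (C - \lambda)(s-r) \le 0,
\]
so $\tilde F$ is properly monotone in $r$. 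Since the rescaling acts on $(r,p,X)$ by a single uniformly bounded scalar factor and the added $-\lambda r$ term does not involve $(t,x,p,X)$, both condition (3.14) (on $-\tilde F$) and the uniform continuity of $\tilde F$ on bounded arguments transfer directly from the corresponding hypotheses on $F$.

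Having reduced matters to a proper equation that satisfies (3.14), I would run the classical doubling-of-variables argument. Assuming for contradiction that $\sup_{[0,T] \times \R^n} (\tilde u - \tilde v) > 0$, I would study the penalized functional
\[
\Phi(t,s,x,y) := \tilde u(t,x) - \tilde v(s,y) - \tfrac{1}{2\varepsilon}\bigl(|x-y|^2 + (t-s)^2\bigr) - \tfrac{\eta}{T-t} - \alpha \bigl(1 + |x|^2 + |y|^2\bigr)^{1/2},
\]
the last two corrections being chosen to push the maximizer strictly away from the terminal slice $\{t = T\}$ and into a bounded region of $\R^n \times \R^n$. The parabolic Crandall--Ishii lemma then produces matrices $X$, $Y$ satisfying the usual twisted inequality at the maximizer; combining (3.14) for $-\tilde F$ with the strict properness $\lambda > C$ and the sub-/super-solution viscosity tests, one extracts a contradiction upon passing, in this order, $\varepsilon \to 0$, $\alpha \to 0$, and $\eta \to 0$.

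The main obstacle is the spatial localization on the unbounded domain $\R^n$: to make the cutoff $\alpha (1 + |x|^2 + |y|^2)^{1/2}$ compatible with (3.14), one must exploit boundedness of $\tilde u, \tilde v$ (inherited from bounded viscosity solutions, as in the $BUC$ category where the theorem is subsequently applied in Proposition \ref{propBSDEToViscositySolution}) together with the assumed uniform continuity of $F$ on bounded $(r,p,X)$; this is exactly the standard localization argument of \cite[Sec.~8]{bibCrandallIshiiLions}. Apart from this well-understood technicality and the exponential rescaling above, no new ingredients are required, and the proof reduces to transcribing the template of the User's Guide.
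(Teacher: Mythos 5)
You take a genuinely different route from the paper: the paper's proof is a one-liner that time-reverses ($\tilde u(t,x) := u(T-t,x)$, etc.) to obtain an initial value problem and then cites Theorem 20 of \cite{bibFrizOberhauser}; you instead attempt a self-contained argument via exponential rescaling followed by doubling of variables. The rescaling idea is the right one, but as written it contains a sign error that actually reverses the conclusion, and the subsequent sketch leaves the crucial localization untreated.

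The sign problem comes from not respecting that \eqref{eqParabolicEquation} is a \emph{terminal}-value problem. With your stated substitution $\tilde u := e^{-\lambda t}u$, the correct transformed driver is
\[
\tilde F(t,x,r,p,X) = e^{-\lambda t}F\bigl(t,x,e^{\lambda t}r,e^{\lambda t}p,e^{\lambda t}X\bigr) + \lambda r
\]
(note the \emph{plus} sign in $+\lambda r$, not the $-\lambda r$ you wrote; your formula for $\tilde F$ is inconsistent with your stated change of variables). With the correct formula, the weak-properness hypothesis only yields $\tilde F(t,x,s,p,X) - \tilde F(t,x,r,p,X) \le (\lambda + C)(s-r)$ for $s\ge r$, which is worse than what you started with, not a genuine monotonicity. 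To obtain proper monotonicity for a terminal-value problem one must rescale \emph{toward} $t=T$, i.e.\ set $\tilde u := e^{\lambda t}u$, giving $\tilde F(t,x,r,p,X) = e^{\lambda t}F(t,x,e^{-\lambda t}r,e^{-\lambda t}p,e^{-\lambda t}X) - \lambda r$ and hence $\tilde F(t,x,s,p,X) - \tilde F(t,x,r,p,X) \le (C-\lambda)(s-r) \le 0$. (One can check this on the scalar ODE $\dot u + F(u)=0$, $u(T)$ given: the correct integrating factor is $e^{\lambda t}$, not $e^{-\lambda t}$.) The same orientation confusion appears in your time penalization: since the data are at $t=T$, the penalization must blow up at the open end $t=0$, so it should be $-\eta/t$ rather than $-\eta/(T-t)$.

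Even after fixing the signs, the remaining step is not just routine. You invoke ``the standard localization argument of Sec.~8 of \cite{bibCrandallIshiiLions}'', but that section treats bounded domains; making the $\alpha(1+|x|^2+|y|^2)^{1/2}$ cut-off interact correctly with (3.14) on all of $\R^n$ is exactly the technical content that the paper outsources to Theorem 20 of \cite{bibFrizOberhauser}. Either carry that localization out in detail, or do what the paper does and reduce by time reversal to a ready-made comparison theorem on $\R^n$ for degenerate parabolic equations with a one-sided Lipschitz (weak-properness) zero-order term.
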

\begin{proof}
  Let $\tilde{u}(t,x) := u(T-t,x), \tilde{v}(t,x) := v(T-t,x)$.
  Then $\tilde{u}$ is a subsolution and $\tilde{v}$ is a supersolution to
  \begin{align*}
    \partial_t u(t,x) - F(t,x,u(t,x),Du(t,x),D^2u(t,x)) = 0,
    u(0,x) = g(x).
  \end{align*}

  Hence we can apply Theorem 20 in \cite{bibFrizOberhauser} to get the desired result (note that the $F$ there is $-F$ here, since we consider a terminal value problem).
\end{proof}

\section{ Comparison for PDEs II }
We consider the equation
\begin{align}
  \label{eq18}
  -\partial_t u - \frac{1}{2} \Tr[ \sigma(t,x) \sigma(t,x)^T D^2 u ] - \langle b(t,x), Du \rangle - f(t,x,u,Du \sigma(t,x) ) &= 0, \quad (t,x) \in [0,T] \times \R^n, \\
  u(T,x) &= g(x), \quad x \in \R^n, \notag
\end{align}
where $f: [0,T] \times \R^n \times \R \times \R^n \to \R$ is a continuous function
and $g: \R^n \to \R$ is a bounded, continuous function.

The following statement as well as its proof are a modification of Theorem 3.2 in \cite{bibKobylanski}.
(The statement is not in is most general form, but adjusted to what we need in the main text.)
\begin{appendixTheorem}
  \label{thmComparisonForPDE}
  %\begin{itemize}
    % (H4)
    %\item
      Assume that there exists a constant $L>0$ such that for $(t,x) \in [0,T] \times \R^n$
      \begin{align*}
        |b(t,x) - b(t,y)| + |\sigma(t,x) - \sigma(y)| &\le L |x-y|, \\
        |b(t,x)|^2 + |\sigma(t,x)|^2 &\le L ( 1 + |x|^2 ).
      \end{align*}

    % (H5)
    %\item
      Assume that there exists a constant $C > 0$ such that for $(t,x,y,z) \in [0,T] \times \R^n \times \R \times \R^n$
      \begin{align*}
        |f(t,x,y,z)| &\le C ( 1 + |z|^2 ), \\ 
        |\partial_z f(t,x,y,z)| &\le C ( 1 + |z| ).
      \end{align*}
    %\item
      Assume that there exists a constant $\Cunif$ such that for every $\varepsilon > 0$ there exists an
      $h_\varepsilon > 0$ such that for $(t,x,y,z) \in [T-h_\varepsilon,T] \times \R^n \times \R \times \R^n$ we have
      \begin{align}
        \label{eqPDEUniformBoundAssumption}
        \partial_y f(t,x,y,z) \le \Cunif + \varepsilon |z|^2.
      \end{align}
    %\item
      Assume that there exists a constant $C > 0$ such that for $(t,x,y,z) \in [0,T] \times \R^n \times \R \times \R^n$
      \begin{align*}
        |\partial_x f(t,x,y,z)| \le C ( 1 + |z|^2 ).
      \end{align*}
  %\end{itemize}

  Then there exists an $\varepsilon^* = \varepsilon^*( ||u||_\infty, ||v||_\infty, C, \Cunif ) > 0$
  such
  that if
  $u$ is a bounded upper semicontinuous viscosity solution of \eqref{eq18} on $[T-h_{\varepsilon^*},T]$
  and if
  $v$ is a bounded lower semicontinuous viscosity solution of \eqref{eq18} on $[T-h_{\varepsilon^*},T]$
  such that for $x \in \R^n$
  \begin{align*}
    u(T,x) \le v(T,x),
  \end{align*}
  then for $(t,x) \in [T-h_{\varepsilon^*},T) \times \R^n$ we have
  \begin{align*}
    u(t,x) \le v(t,x).
  \end{align*}
\end{appendixTheorem}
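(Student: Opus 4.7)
The plan is to mirror the exponential change of variable used in the proof of the BSDE comparison result (Theorem A.2) and reduce \eqref{eq18} to an equivalent PDE whose nonlinearity satisfies the classical Kobylanski-type structure condition $\partial_y F + A|\partial_z F|^2 \le G$, after which Kobylanski's original comparison theorem for bounded viscosity solutions of quasilinear parabolic PDEs with quadratic gradient growth (\cite{bibKobylanski}, Theorem 3.2) applies verbatim on the small time interval. The restriction to $[T-h_{\varepsilon^*},T]$ arises precisely because the structure condition needs $\partial_y f$ to be bounded up to an arbitrarily small quadratic-in-$z$ error, which by hypothesis holds only near $T$.

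First I would set $M := \max(\|u\|_\infty,\|v\|_\infty)$ and, for parameters $\lambda,B>0$ to be chosen, introduce the exponential change of variable
\begin{equation*}
\gamma(\tilde y) := \frac{1}{\lambda}\log\!\left(\frac{e^{\lambda B \tilde y}+1}{B}\right)-M, \qquad \alpha := \gamma^{-1},
\end{equation*}
so that $\alpha$ is a $C^\infty$ strictly increasing map defined on $[-M,M]$ and $w(y):=\gamma'(\alpha(y))=B-e^{-\lambda(y+M)}$ is strictly positive there. Setting $\tilde u := \alpha\circ u$ and $\tilde v := \alpha\circ v$, a routine chain rule computation (entirely analogous to the beginning of the proof of Theorem A.2) shows that $\tilde u$ is a bounded upper semicontinuous viscosity subsolution and $\tilde v$ a bounded lower semicontinuous supersolution of
\begin{equation*}
-\partial_t\tilde u - \tfrac{1}{2}\Tr[\sigma\sigma^T D^2\tilde u] - \langle b, D\tilde u\rangle - \tilde F(t,x,\tilde u, D\tilde u\,\sigma(t,x)) = 0,
\end{equation*}
with
\begin{equation*}
\tilde F(t,x,\tilde y,\tilde z) = \frac{1}{w}\Bigl[ f(t,x,\gamma(\tilde y), w\tilde z) + \tfrac{1}{2}\gamma''(\tilde y)|\tilde z|^2\Bigr],
\end{equation*}
where $w=w(\gamma(\tilde y))$. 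Since $\alpha$ is increasing, $\tilde u(T,\cdot)\le\tilde v(T,\cdot)$, and comparison for $(u,v)$ is equivalent to comparison for $(\tilde u,\tilde v)$.

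Second I would run exactly the calculation appearing in steps~1 of the proof of Theorem A.2, reading $f$ for $f^2$. Fixing $B=6$, choosing $\lambda=\lambda(C,M)$ large, and then $A$ small, that computation produces a strictly positive universal constant $\delta(C,M)$ and a constant $G=G(C,\Cunif,M)$ such that, provided
\begin{equation*}
\partial_y f(t,x,y,z) \le \Cunif + \delta(C,M)|z|^2 \quad\text{for all } |y|\le M,
\end{equation*}
the transformed driver obeys the Kobylanski structure inequality
\begin{equation*}
\partial_{\tilde y}\tilde F + A|\partial_{\tilde z}\tilde F|^2 \le G.
\end{equation*}
The remaining hypotheses on $\tilde F$ required by Kobylanski's PDE comparison theorem -- continuity, quadratic-in-$\tilde z$ growth of $\tilde F$, and Lipschitz-type control of $\partial_{\tilde z}\tilde F$ -- follow directly from the bounds on $f$, $\sigma$, $b$ and the smoothness of $\gamma$ via the chain rule.

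Finally I would set $\varepsilon^* := \delta(C,M)$ and $h^* := h_{\varepsilon^*}$. By assumption \eqref{eqPDEUniformBoundAssumption}, the above bound on $\partial_y f$ holds on $[T-h^*,T]\times\R^n\times[-M,M]\times\R^n$, so the structure inequality on $\tilde F$ is valid on $[T-h^*,T]$. Applying Kobylanski's Theorem 3.2 to $\tilde u,\tilde v$ on $[T-h^*,T]\times\R^n$ yields $\tilde u\le\tilde v$ there, and undoing the transformation via the strictly increasing $\gamma$ gives $u\le v$ on $[T-h_{\varepsilon^*},T)\times\R^n$, as required. The hard part is the algebraic verification that the dangerous quadratic-in-$z$ terms coming from $\partial_y f$ and from $\gamma''$ are dominated by $A|\partial_{\tilde z}\tilde F|^2$, which is exactly the delicate absorption argument carried out in the proof of Theorem A.2; it is this absorption that forces us to work on a small interval where $\varepsilon$ can be chosen smaller than $\delta(C,M)$.
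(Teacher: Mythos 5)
Your plan diverges from the paper's proof at the crucial point: the paper does \emph{not} reuse the transformation of Theorem~\ref{thmComparison} unchanged, and it does \emph{not} aim for the BSDE-type structure inequality $\partial_{\ty}\tilde F + A|\partial_{\tz}\tilde F|^2 \le G$. Instead, the paper's change of variable contains an additional time-dependent factor: it sets $\tilde u(t,x) = \phi^{-1}\bigl(e^{Kt}(u(t,x)-M)\bigr)$ with a parameter $K$ that is eventually chosen to dominate $\Cunif$, namely $K = \max\{K_0(C),\Cunif\}+1$. Differentiating in time picks up a term $K\,\phi(\tu)/\phi'(\tu)$, which contributes $-K$ to $\partial_{\tu}\tilde f$, and this is what lets the paper push the transformed nonlinearity into the strictly \emph{proper} regime
\begin{align*}
\partial_{\tu}\tilde f(t,x,\tu,\tp) \le -\tfrac{\delta}{2}|\pp|^2 - 1 ,
\end{align*}
which is exactly condition (23) in Kobylanski's proof of her Theorem~3.2, and is the hypothesis from which that proof actually runs (the paper then says the remainder is a verbatim copy of Kobylanski's argument). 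Strict negativity of $\partial_{\tu}\tilde f$ is what the viscosity doubling-of-variables machinery needs; a one-sided upper bound by a positive constant $G$, which is all the $\partial_y F + A|\partial_z F|^2 \le G$ estimate gives you, is not properness and is the wrong target for the PDE comparison.

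Concretely, if you run the calculation of Theorem~\ref{thmComparison} "reading $f$ for $f^2$" without the $e^{Kt}$ factor, the term $\Cunif$ coming from $\partial_y f$ survives on the right-hand side and you end up with $\partial_{\tu}\tilde f \le \Cunif + O(1)$, which is positive; there is no mechanism in your change of variables to produce the cancelling $-K$. Moreover, you cannot invoke Kobylanski's Theorem~3.2 "verbatim" as a black box on the transformed equation either: her stated hypotheses do not allow $\partial_y f$ to grow quadratically in $z$, and even after your transformation the residual quadratic-in-$\tp$ piece of $\partial_{\tu}\tilde f$ is only controlled because it is dominated by $-\delta|\pp|^2$ — which again requires the strict-negativity estimate you have not produced. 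The fix is to insert the time-dependent exponential $e^{Kt}$ into the transformation (as the paper does), choose $K > \Cunif$, and verify condition (23) directly; then the rest of Kobylanski's PDE comparison proof can be copied.
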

\begin{proof}
  Set $M := \max\{ ||u||_{\infty}, ||v||_{\infty} \} + 1$.
  Let $\lambda > 0, A > 1, K > 0$ be constants to be chosen later.
  Define
  \begin{align*}
    \phi(\tu) := \frac{1}{\lambda} \ln\left( \frac{e^{\lambda A \tu} + 1}{A} \right): \R \to (-\frac{\ln( A )}{\lambda}, \infty).
  \end{align*}
  Since we want to plug $u$ and $v$ in the inverse of $\phi$ later on, we will have to choose
  $A \ge e^{ \lambda 2 M e^{Kt} }$,
  so that $\{ e^{Kt}(\uu - M) : \uu \in [-M,M] \}$ is contained in the range of $\phi$.
  Then
  \begin{align*}
    \phi'(\tu) = A \frac{1}{ 1 + e^{-\lambda A \tu} },
    \quad
    \phi^{-1}(\uu) = \frac{1}{\lambda A} \ln\left( A e^{\lambda \uu} - 1 \right).
  \end{align*}

  By differentiating $\phi( \phi^{-1}( \uu ) ) = \uu$ we get
  \begin{align*}
    (\phi^{-1})'( \uu ) = \frac{1}{ \phi'( \phi^{-1}( \uu ) ) },
    \quad
    (\phi^{-1})''(\uu) = - \frac{ \phi''( \phi^{-1}(\uu) ) }{ \phi'( \phi^{-1}(\uu)^2 }.
  \end{align*}
  Define
    $r(\uu) := \phi^{-1}( e^{Kt} (\uu - M) )$
  , its inverse
    $s(\tu) := \phi( \tu ) e^{-Kt} + M$
  and
    $g(\uu) := e^{ -\lambda e^{Kt} (\uu - M) }: [-M, M] \to [1, e^{\lambda 2 M e^{Kt}}]$.
  Then
    $g'(\uu) = -\lambda e^{Kt} g(\uu)$.

  Define
  \begin{align*}
    w(\uu)
    &:= e^{-K t} \phi'( r(\uu) )
    = \partial_{\tu} s |_{\tu = r(\uu)}
    = e^{-Kt} \left[ A - e^{ -\lambda e^{Kt} (\uu - M) } \right]
    = e^{-Kt} \left[ A - g(\uu) \right],
  \end{align*}
  which is non-negative for $A \ge e^{\lambda 2 M e^{Kt}}$.
  Then
  \begin{align*}
    w'(\uu)
    %&= \frac{ \phi''( r(\uu) ) }{ \phi'( r(\uu) ) }
    %= \lambda e^{-\lambda e^{Kt} (\uu-M)}
    = \lambda g(\uu), \quad
    w''(\uu)
    %&= - e^{Kt} \lambda^2 e^{-\lambda e^{kt} (\uu-M)}
    = - e^{Kt} \lambda^2 g(\uu).
  \end{align*}

  Let now $u(t,x)$ be a solution to \eqref{eq18}. Let $\tilde{u}(t,x) := r(u(t,x))$.
  Then $u(t,x) = s( \tu(t,x) )$, and hence
  \begin{align*}
    \partial_{x_i} u(t,x)
    &= \phi'(\tilde{u}(t,x)) e^{-Kt} \partial_{x_i} \tilde{u}(t,x), \\
    \partial_{x_j x_i} u(t,x)
    &=
    \phi''(\tilde{u}(t,x)) e^{-Kt} \partial_{x_j} \tilde{u}(t,x) \partial_{x_i} \tilde{u}(t,x)
    + \phi'(\tilde{u}(t,x)) e^{-Kt} \partial_{x_j x_i} \tilde{u}(t,x),
  \end{align*}
  i.e.
  \begin{align*}
    D u(t,x) &= \phi'( \tilde{u}(t,x) ) e^{-K t} D \tilde{u}(t,x), \\
    D^2 u(t,x) &= \phi''(\tilde{u}(t,x)) e^{-K t} D\tilde{u}(t,x) \otimes D\tilde{u}(t,x) + \phi'(\tilde{u}(t,x)) e^{-K t} D^2 \tilde{u}(t,x).
  \end{align*}

  Hence
  \begin{align*}
    \partial_t \tilde{u}(t,x)
    %%%%%%%%%%%%%%%%%%%
    %%%%%%%%%%%%%%%%%%%
    &=
    \frac{1}{ \phi'( \tilde{u}(t,x) ) }
    \left[ K e^{Kt} (u(t,x) - M) + e^{Kt} \partial_t u(t,x) \right] \\
    %%%%%%%%%%%%%%%%%%%
    %%%%%%%%%%%%%%%%%%%
    %% &=
    %% \frac{1}{ \phi'( \tilde{u}(t,x) ) }
    %% K e^{Kt} (u(t,x) - M) 
    %% %%%%%%%%%%%%%%%%%%%
    %% %%%%%%%%%%%%%%%%%%%
    %% + 
    %% \frac{1}{ \phi'( \tilde{u}(t,x) ) }
    %% e^{Kt} \left( Lu - f(t,x, u(t,x), D u(t,x) \sigma(t,x) ) \right) \\
    %%%%%%%%%%%%%%%%%%%
    %%%%%%%%%%%%%%%%%%%
    &=
    \frac{1}{ \phi'( \tilde{u}(t,x) ) }
    K e^{Kt} (u(t,x) - M) \\
    &\quad
    - 
    \frac{1}{ \phi'( \tilde{u}(t,x) ) }
    e^{Kt} \left[ \frac{1}{2} \Tr[ \sigma(t,x) \sigma(t,x)^T D^2 u(t,x) ] + \langle b(t,x), Du(t,x) \rangle \right] \\
    &\quad
    -
    \frac{1}{ \phi'( \tilde{u}(t,x) ) }
    e^{Kt} f(t,x, u(t,x), D u(t,x) \sigma(t,x) ) \\
    %%%%%%%%%%%%%%%%%%%
    %%%%%%%%%%%%%%%%%%%
    %% STIMMT, ABER RAUS UM PLATZ ZU SPAREN
    %% &=
    %% \frac{1}{ \phi'( \tilde{u}(t,x) ) }
    %% K e^{Kt} (u(t,x) - M)  \\
    %% &\quad
    %% - 
    %% \frac{1}{ \phi'( \tilde{u}(t,x) ) }
    %% e^{Kt}
    %%   \Bigl[
    %%   \frac{1}{2} \Tr[ \sigma(t,x) \sigma(t,x)^T ( \phi''(\tilde{u}(t,x)) e^{-Kt} D\tilde{u}(t,x) \otimes D\tilde{u}(t,x) \\
    %%   &\qquad \qquad \qquad \qquad \qquad \qquad
    %%   + \phi'(\tilde{u}(t,x)) e^{-Kt} D^2 \tilde{u}(t,x) ) ] \\
    %%   &\qquad \quad
    %%   + \phi'( \tilde{u}(t,x) ) e^{-Kt} \langle b(t,x), D\tilde{u}(t,x) \rangle
    %%   \Bigr] \\
    %% &\quad
    %% -
    %% \frac{1}{ \phi'( \tilde{u}(t,x) ) }
    %% e^{Kt} f(t, x, s( \tilde{u}(t,x) ), \phi'( \tilde{u}(t,x) ) e^{-Kt} D \tilde{u}(t,x) \sigma(t,x) ) \\
    %%%%%%%%%%%%%%%%%%%
    %%%%%%%%%%%%%%%%%%%
    &=
    K \frac{ \phi(\tilde{u}(t,x) }{ \phi'( \tilde{u}(t,x) ) } \\
    &\quad
    - \frac{1}{2} \Tr[ \sigma(t,x) \sigma(t,x)^T D^2 \tilde{u}(t,x) ]
    - \frac{\phi''(\tilde{u}(t,x))}{ \phi'( \tilde{u}(t,x) ) }
    \frac{1}{2} \Tr[ \sigma(t,x) \sigma(t,x)^T  D\tilde{u}(t,x) \otimes D\tilde{u}(t,x)] \\
    &\quad
    - \langle b(t,x),  D \tilde{u}(t,x) \rangle
    - \frac{1}{ \phi'( \tilde{u}(t,x) ) } e^{Kt} f(t, x, s( \tilde{u}(t,x) ), \phi'( \tilde{u}(t,x) ) e^{-Kt} D \tilde{u}(t,x) \sigma(t,x) )
  \end{align*}

  So $\tilde{u}$ is a solution to
  \begin{align*}
    -\partial_t \tilde{u}(t,x)
    -\frac{1}{2} \Tr[ \sigma(t,x) \sigma(t,x)^T D^2u(t,x) ]
    - \langle( b(t,x), D\tilde{u}(t,x) \rangle
    - \tilde{f}(t,x,\tilde{u}(t,x), D\tilde{u}(t,x) \sigma(t,x) )
    = 0,
  \end{align*}
  where, denoting from now on $\uu = s(\tu)$, $\pp = w(\uu) \tp$,
  \begin{align*}
    \tilde{f}(t,x,\tu,\tp)
    &=
    - K \frac{ \phi(\tu) }{ \phi'( \tu ) }
    + \frac{ \phi''(\tu) }{ \phi'( \tu ) }
      \frac{1}{2}
      |\tp|^2 \\
    &\quad
    + \frac{1}{ \phi'( \tu ) } e^{Kt} f(t, x, s( \tu ), \phi'( \tu ) e^{-Kt} \tp ) \\
    &=
    - K \frac{ \uu - M }{ w(\uu) }
    + w'(\uu) \frac{1}{2} |\tp|^2
    + \frac{1}{ w(\uu) } f(t, x, \uu, w(\uu) \tp ).
  \end{align*}

  Hence
  \begin{align*}
    \partial_{\tu} \tilde{f}(t,x,\tu, \tp )
    %%%%%%%%%%%%%%%%%%%
    &=
    -
    K ( 1 - (\uu-M) \frac{w'(\uu)}{w(\uu)} ) )
    +
    \frac{1}{2} \frac{w''(\uu)}{w(\uu)} |\pp|^2  \\
    &\qquad
    -
    \frac{w'(\uu)}{w(\uu)} f(t, x, \uu, \pp )
    +
    \partial_\uu f(t, x, \uu, \pp \sigma(t,x) ) \\
    &\qquad
    +
    \frac{w'(\uu)}{w(\uu)} \partial_\pp f(t, x, \uu, \pp ) \pp \\
    %%%%%%%%%%%%%%%%%%%
    &\le
    -
    K ( 1 - (\uu-M) \frac{w'(\uu)}{w(\uu)} ) )
    +
    \frac{1}{2} \frac{w''(\uu)}{w(\uu)} |\pp|^2 \\
    &\qquad
    +
    \frac{w'(\uu)}{w(\uu)} C ( 1 + |\pp|^2 )
    +
    %c_\varepsilon + |\varepsilon \sigma(t,x)^T \pp|^2
    \partial_\uu f(t, x, \uu, \pp ) \\
    &\qquad
    +
    \frac{w'(\uu)}{w(\uu)} C (1 + |\pp|) |\pp| \\
    %%%%%%%%%%%%%%%%%%%
    &\le
    \frac{ |\pp|^2 }{w(\uu)}
    \left(
       \frac{1}{2} w''(\uu)
       +
       C w'(\uu)
       +
       C w'(\uu)
    \right) \\
    &\qquad
    - K ( 1 - (\uu-M) \frac{w'(\uu)}{w(\uu)} )
    + \partial_\uu f(t, x, \uu, \pp )
    + C \frac{w'(\uu)}{w(\uu)}
    + C \frac{w'(\uu)}{w(\uu)} |\pp|
  \end{align*}
  Now using
  \begin{align*}
    C \frac{w'(\uu)}{w(\uu)} |\pp|
    \le 
    \frac{ |\pp|^2 }{w(\uu)} w'(\uu)
    +
    \frac{w'(\uu)}{w(\uu)} C^2
  \end{align*}
  we get
  \begin{align}
    \label{eqPreFinalInequality}
    \partial_{\tu} \tilde{f}(t,x,\tu, \tp )
    %%%%%%%%%%%%%%%%%%%
    &\le
    \frac{ |\pp|^2 }{w(\uu)}
    \left(
       \frac{1}{2} w''(\uu)
       +
       (2C + 1) w'(\uu)
    \right) \\
    &\qquad \qquad
    -
    K
    + \partial_\uu f(t, x, \uu, \pp )
    +
    \frac{w'(\uu)}{w(\uu)} \left( C + K (\uu-M) + C^2 \right). \notag
  \end{align}

  Note that
  \begin{align*}
    C + K_0 (\uu-M) + C^2
    &\le C - K_0 + C^2, \quad \uu \in [-(M-1),M-1].
  \end{align*}
  Hence we can choose $K_0 = K_0(C)$ sufficiently large, such that
  \begin{align*}
    C + K_0 (\uu-M) + C^2 &\le -1, \quad \uu \in [-(M-1),M-1].
  \end{align*}
  Then we have that for all choices of $K_0 > K$, and all choices $\lambda > 0$
  that the last term in \eqref{eqPreFinalInequality}
  \begin{align*}
    \frac{w'(\uu)}{w(\uu)} \left( C + K (\uu-M) + C^2 \right)
    =
    \frac{ e^{-\lambda e^{Kt} (\uu-M)} }{ A - e^{-\lambda e^{Kt} (\uu-M)} } \lambda e^{Kt}
    \left( C + K (\uu-M) + C^2 \right)
  \end{align*}
  is negative, as long as $A > e^{\lambda 2 M e^{Kt}}$.
  We now fix $K = K(C, \Cunif) = \max\{ K_0(C), \Cunif \} + 1$.
  Then
  \begin{align*}
    %\frac{1}{w(\uu)}
    %\left[ 
      \frac{1}{2} w''(\uu) + (2C + 1) w'(\uu)
    %\right]
    &=
    %\frac{1}{A - g(\uu)} e^{Kt}
    %\left[
      -\frac{1}{2} e^{Kt} \lambda^2 g(\uu) + \lambda (2 C + 1) g(\uu) \\
    %\right]
    &\le
    -\frac{1}{2} \lambda^2 g(\uu) + \lambda (2 C + 1) g(\uu) \\
    &=
    g(\uu) \lambda \left[ (2 C + 1) - \frac{1}{2} \lambda \right].
  \end{align*}

  So, if we choose $\lambda = \lambda(C) = 4C + 4$, we have
  \begin{align*}
    \frac{1}{2} w''(\uu) + (2C + 1) w'(\uu)
    &=
    g(\uu) (4C + 4) (-1)
    \le
    -(4C + 4)
    \le
    -1.
  \end{align*}

  We now fix $A = A(\lambda(C), M, K(C, \Cunif) = A(M, C, \Cunif) = e^{\lambda 2 M e^{K T}} + 1$.
  Then for the first term in \eqref{eqPreFinalInequality}
  \begin{align*}
    \frac{ |\pp|^2 }{w(\uu)}
    \left(
       \frac{1}{2} w''(\uu)
       +
       (2C + 1) w'(\uu)
    \right)
    &=
    \frac{ |\pp|^2 }{ e^{\lambda 2 M e^{K T}} + 1 - e^{-\lambda e^{Kt} (\uu-M)} } e^{Kt}
    \left(
       \frac{1}{2} w''(\uu)
       +
       (2C + 1) w'(\uu)
    \right) \\
    &\le
    - \frac{ |\pp|^2 }{ e^{\lambda 2 M e^{K T}} + 1 - e^{-\lambda e^{Kt} (\uu-M)} } e^{Kt} \\
    &\le
    - \frac{ |\pp|^2 }{ e^{\lambda 2 M e^{K T}} } e^{Kt} \\
    &< - \delta |\pp|^2 < 0,
  \end{align*}
  with
  \begin{align*}
    \delta
    = \delta(\lambda(C), K(C, \Cunif), M)
    = \delta(M, C, \Cunif)
    = \frac{ e^{Kt} }{ e^{\lambda 2 M e^{K T}} + 1 } > 0.
  \end{align*}

  If we now choose in \eqref{eqPDEUniformBoundAssumption} the $h = h(\delta) = h(M, C, \Cunif) > 0$ so small, so that on $[T-h,T]$ we have
  \begin{align*}
    \partial_y f(t,x,y,z) \le \Cunif + \frac{\delta}{2} |z|^2,
  \end{align*}
  we get that on $[T-h,T]$
  \begin{align*}
    \partial_{\tu} \tilde{f}(t,x,\tu, \tp )
    %%%%%%%%%%%%%%%%%%%
    &\le
    \frac{ |\pp|^2 }{w(\uu)}
    \left(
       \frac{1}{2} w''(\uu)
       +
       (2C + 1) w'(\uu)
    \right) \\
    &\quad
    -
    K
    +
    \partial_\uu f(t, x, \uu, \pp )
    +
    \frac{w'(\uu)}{w(\uu)} \left( C + K (\uu-M) + C^2 \right) \\
    &\le
    - \delta |\pp|^2
    -
    K
    +
    \Cunif
    + \frac{\delta}{2} |\pp|^2 \\
    &\le
    - \delta |\pp|^2
    + \frac{\delta}{2} |\pp|^2
    - 1 \\
    &=
    - \frac{\delta}{2} |\pp|^2
    - 1.
  \end{align*}
  Which is the desired inequality, (23) in \cite{bibKobylanski}.

  The rest of the proof is now an exact copy of the proof of Theorem 3.2 in \cite{bibKobylanski}.
\end{proof}

{\bf Acknowledgement:} Part of this work was carried out during the 2010 SPDE programme at INI, Cambridge. The first author is supported by an IRTG (Berlin-Zurich) PhD-scholarship.

\bibliographystyle{plain}
\bibliography{bsdes-with-rough-drivers}

\end{document}